\documentclass{amsart}
\usepackage{amsmath,amsfonts,amssymb,amsthm,mathtools,tensor}
\usepackage[a4paper, margin=1in]{geometry} 

\usepackage{hyperref}
\usepackage{url}
\usepackage{xcolor}
\usepackage{algorithm}
\usepackage{algorithmic}
\usepackage{mathtools}
\mathtoolsset{showonlyrefs}

\newcommand{\inner}[2]{\left\langle #1, #2 \right\rangle}

\DeclareMathOperator*{\argmin}{arg\,min}

\title[Mirror descent actor-critic for entropy-regularised MDPs]{Mirror descent actor-critic methods for entropy regularised MDPs in general spaces: stability and convergence}

\addtocounter{footnote}{0}
\author{Denis Zorba}
\address{School of Mathematics, University of Edinburgh, UK}
\email{ezorba@ed.ac.uk}

\author{David \v{S}i\v{s}ka}
\address{School of Mathematics, University of Edinburgh, UK}
\email{d.siska@ed.ac.uk}

\author{Lukasz Szpruch}
\address{School of Mathematics, University of Edinburgh, UK, 
The Alan Turing Institute, UK, and Simtopia, UK}
\email{l.szpruch@ed.ac.uk}

\theoremstyle{plain}
\newtheorem{theorem}{Theorem}[section]

\newtheorem{lemma}[theorem]{Lemma}
\newtheorem{corollary}[theorem]{Corollary}
\theoremstyle{definition}
\newtheorem{definition}[theorem]{Definition}
\newtheorem{assumption}[theorem]{Assumption}
\theoremstyle{remark}
\newtheorem{remark}[theorem]{Remark}

\begin{document}

\maketitle

\footnotetext{Keywords: Reinforcement Learning, Actor Critic, Temporal Difference learning, Mirror descent, Entropy Regularization, Non-convex optimization, Global convergence, Stability, Function approximation.}

\begin{abstract}
We provide theoretical guarantees for convergence of discrete-time policy mirror descent with inexact advantage functions updated using temporal difference (TD) learning 
for entropy regularised MDPs in Polish state and action spaces.
We rigorously derive sufficient conditions under which the single-loop actor-critic scheme is stable and convergent.
To weaken these conditions, we introduce a variant that performs multiple TD steps per policy update
and derive an explicit lower bound on the number of TD steps required to ensure stability.
Finally, we establish sub-linear convergence when the number of TD steps grows logarithmically with the number of policy updates, and linear convergence when it grows linearly under 
a concentrability assumption. 
\end{abstract}

\section{Introduction}
In reinforcement learning (RL), an agent seeks to find an optimal policy that minimises their expected cumulative cost by interacting with their environment. 
Such a framework has had numerous remarkable applications \cite{silver}, \cite{isaacgym}, \cite{nature}. 
Theoretical understanding of key RL algorithms and their convergence properties has been growing over the last decades~\cite{jaakkola1993convergence, tsitsiklis1994asynchronous, mei2020global, agarwal2021theory}.

In practice actor critic algorithms are some of the most widely used. 
The actor updates the policy using a policy gradient method (which requires the advantage function) and the critic, which provides an estimate of the advantage function, is updated based on costs and new states observed by interacting directly with the environment using e.g. temporal difference (TD)-based loss.

Classical policy gradient methods require careful tuning of step-size to achieve stability which led to the development of trust-region approaches such as TRPO~\cite{schulman2015trust} and its practical variant PPO~\cite{schulman2017proximal}.
These improve stability by controlling the change between successive policies via KL-divergence constraint or policy ratio clipping respectively.
Policy mirror descent~\cite{lan,lan2023policy,david_fisher} provides a closely related first-order viewpoint, replacing the hard trust-region constraint with a KL-based penalty and yielding algorithms for which it is possible to obtain theoretical guarantees (under the assumption of access to the exact advantage function).
Adding entropic regularisation to the objective then leads to a number of theoretical and practical advantages~\cite{entropy_pmlr, cayci,david_leahy}. 
Firstly, entropy regularised MDPs are guaranteed to have a unique optimal policy and can accelerate the convergence of policy gradient methods \cite{mei2020global,lan,lan2023policy,david_fisher}. 
Secondly, the entropic regularisation ensures persistent exploration~\cite{rawlik2012stochastic, haarnoja2017reinforcement} and provides some (albeit minimal) robustness to changes in costs and environment~\cite{ziebart2010modeling, eysenbach2021maximum}. 

However, while the theory of discrete time policy mirror descent is well understood for MDPs without entropy regularisation \cite{tomar2022mirror} and for tabular MDPs with entropy regularisation \cite{lan2023policy}, its stability and convergence remains open for entropy regularised MDPs in general state and action spaces.
This paper fills this gap in the literature by rigorously proving the stability and convergence of policy mirror descent for entropy regularised MDPs in Polish state and action spaces, where the critic is updated using one step or multiple steps of temporal difference learning. 

\subsection{Related works}
While actor-critic methods employing mirror descent for unregularised MDPs are well understood, both with and without function approximation \cite{tomar2022mirror}, \cite{qiu2021finite_approxerror1}, the literature surrounding its extension to entropy regularised MDPs still remains sparse.
In this setting, \cite{lan2023policy}, \cite{cayci} demonstrates convergence through carefully constructed decaying step sizes  tabular case.
More recently, \cite{zorba2025convergenceactorcriticentropyregularised} extend these results to the Polish state and action space case.
However, this paper uses the idealised continuous-time dynamics of the policy mirror descent and temporal difference updates.
While the continuous-time limit is mathematically convenient, its connection to any given algorithm is unclear as there are  many discrete schemes that converge to the same continuous-time limit as step size goes to zero.
However, the convergence, or even stability, as the number of discrete steps goes to infinity is not guaranteed.
Moreover, the convergence of the dynamics in~\cite{zorba2025convergenceactorcriticentropyregularised} depends on an exponentially increasing timescale separation.
While this is well defined in the continuous time limit, it is unclear what it implies in relation to the discrete time algorithm.

\subsection{Contributions}\label{sec:contributions}
Our main contributions are as follows.
\begin{itemize}
\item For entropy regularised MDPs in general spaces, we consider a classical actor critic algorithm where the policy is updated using mirror descent and the critic is updated using temporal difference learning. As pointed out in \cite{zorba2025convergenceactorcriticentropyregularised}, ensuring that the relative entropy does not blow up along the gradient flow is difficult in general action spaces.
In the finite action space setting, for any measure $\mu \in \mathcal{P}(A)$ such that $\mu(a_i) > 0$ and for all $s \in S$ it holds that $\operatorname{KL}(\pi(\cdot|s) |\mu) \leq \log |A|$. 
In general action spaces the $\operatorname{KL}$ divergence has no  upper bound (can be $+\infty$) even if $\mu$ has full support.
Thus we derive sufficient conditions of the discrete actor critic updates, under which we demonstrate uniform boundedness of the KL divergence along the actor-critic stepping scheme.
\item \cite{lan2023policy, david_fisher} demonstrate that classical policy mirror descent with exact advantage functions for entropy regularised MDPs in general spaces exhibits sublinear convergence and even linear convergence under a concentrability assumption.
We extend this result and demonstrate that under $Q^{\pi}_{\tau}$-realisability, one can still obtain both rates while using TD learning to update an approximation of the advantage function. 
To the best of our knowledge, these stability and convergence results are the first to be established in the literature.
\end{itemize}

\subsection{Entropy regularised Markov Decision Processes}

See Appendix \ref{sec:technical} for a summary of the notation used in this paper. 
Consider an infinite horizon Markov Decision Process $(S,A,P,c,\gamma)$, where the state space $S$ and action space $A$ are Polish, $P\in \mathcal{P}(S | S \times A)$ is the state transition probability kernel, $c$ is a bounded cost function and $\gamma \in (0,1)$ is a discount factor. Let $\mu \in \mathcal{P}(A)$ denote a reference probability measure and $\tau > 0$ denote a regularisation parameter. To ease notation, for each $\pi \in \mathcal{P}(A|S)$, $s \in S$ and $a \in A$ we define
\begin{equation}
P_{\pi}(ds'|s) := \int_{A} P(ds'|s,a)\pi(da|s),
\end{equation}
\begin{equation}
P^{\pi}(ds',da'|s,a) := P(ds'|s,a)\pi(da'|s').
\end{equation}
For each stochastic policy $\pi \in \mathcal{P}(A|S)$ and $s \in S$, we define the regularised value function by
\begin{equation}
\begin{aligned}\label{eq:value_function}
V^{\pi}_{\tau}(s)
&= \mathbb{E}_{s}^{\pi}\left[\sum_{n=0}^\infty\gamma^n
\Big(c(s_n,a_n) + \tau \operatorname{KL}(\pi(\cdot|s_n)|\mu)\Big)\right]
.
\end{aligned}
\end{equation}
Here $\operatorname{KL}(\pi(\cdot|s)|\mu)$ is the Kullback-Leibler (KL) divergence of $\pi(\cdot|s)$ with respect to $\mu$, defined as
\begin{equation}
\operatorname{KL}(\pi(\cdot|s)|\mu)
:= \int_{A} \ln \frac{d\pi}{d\mu}(a|s) \pi(da|s),
\end{equation}
if $\pi(\cdot|s)$ is absolutely continuous with respect to $\mu$, and infinity otherwise. As a result, for any $s \in S$ we have that $V^{\pi}_{\tau}(s) \in \mathbb{R}\cup \{+\infty\}$. For a given initial distribution $\rho \in \mathcal{P}(S)$, the optimal value function is defined as
\begin{equation}
\label{eq:optim}
\begin{aligned}
V^{\pi}_{\tau}(\rho) := \int_{S} V^{\pi}_{\tau}(s)\rho(ds), \quad V^{*}_{\tau}(\rho) &= \inf_{\pi \in \mathcal{P}(A|S)}V^{\pi}_{\tau}(\rho).
\end{aligned}
\end{equation}
We refer to $\pi^* \in \mathcal{P}(A|S)$ as the optimal policy if $V^*_{\tau}(\rho) = V^{\pi^*}_{\tau}(\rho)$.

The Bellman Principle for entropy regularised MDPs, see Theorem~\ref{thm:dynamics_programming}, ensures that without loss of generality, it is sufficient to consider policies from the class given by Definition~\ref{def:admissible_policies} below.

\begin{definition}[Admissible Policies]
\label{def:admissible_policies}
Let $\Pi_{\mu}$ denote the class of policies for which there exists $f \in B_{b}(S \times A)$ with
\begin{equation}
\pi(da|s) = \frac{\exp(f(s,a))}{\int_{A} \exp(f(s,a))\mu(da)}\mu(da).
\end{equation}
\end{definition}

For each $\pi \in \Pi_\mu$ the value function $V^{\pi}_{\tau}$ is the unique bounded solution of the on-policy Bellman equation
\begin{equation}
\begin{aligned}
V^{\pi}_{\tau}(s)
&= \int_{A}\left(Q_\tau^\pi(s,a)+\tau \ln \frac{d \pi}{d \mu}(a,s)\right)\pi(da|s).
\end{aligned}
\end{equation}
See e.g. Lemma~B.2 of \cite{david_fisher}. Moreover, for each $\pi \in \Pi_\mu$, the state-action value function $Q^{\pi}_{\tau}\in B_b(S\times A)$ is defined as
\begin{equation}
\label{eq:Q_func}
Q^{\pi}_{\tau}(s,a)=c(s,a)+\gamma\int_S V_{\tau}^{\pi}(s')P(ds'|s,a).
\end{equation} and the soft advantage function $A^{\pi}_{\tau} \in B_{b}(S\times A)$ as 
\begin{equation}
A^{\pi}_{\tau}(s,a) = Q^{\pi}_{\tau}(s,a) + \tau \log \frac{d\pi}{d\mu}(s,a) - V^{\pi}_{\tau}(s).
\end{equation}
Finally we define the Bellman operator $\mathrm{T}^{\pi}_{\tau} : B_{b}(S\times A) \to B_{b}(S\times A)$ as
\begin{equation}
\label{eq:bellman_operator_Def}
\begin{aligned}
\mathrm{T}^{\pi}_{\tau} f(s,a)
&= c(s,a) + \gamma\int_{S\times A} f(s',a') P^{\pi}(ds',da'|s,a)+ \tau\gamma \int_{S} \operatorname{KL}(\pi(\cdot|s')|\mu) P(ds'|s,a).
\end{aligned}
\end{equation}
A direct calculation shows that for any $\pi \in \mathcal{P}(A|S)$, $T^{\pi}_{\tau}$ is a $\gamma$-contraction on $B_{b}(S\times A)$ with respect to $| \cdot |_{B_b(S\times A)}$ and hence by Banach's fixed point theorem $Q^{\pi}_{\tau} : S \times A \to \mathbb{R}$ is the unique fixed point of $\mathrm{T}^{\pi}_{\tau}$.

\section{Mirror Descent and Temporal Difference}

In this section we introduce an actor critic scheme to find the optimal policy $\pi^* \in \Pi_{\mu}$ of the entropy regularised MDP. Firstly, let's consider the following policy mirror descent updates. For some $\lambda > 0$, define $G : \mathcal{P}(A|S) \times \mathcal{P}(A|S) \to \mathbb{R}$ as
\begin{align}
G(\pi,\pi')= \int_{S}\bigg(\int_{A} A^{\pi'}_{\tau}(s,a)\pi(da|s) + \frac{1}{\lambda}\operatorname{KL}(\pi|\pi')(s)\bigg)d_{\rho}^{\pi'}(ds),
\end{align}
and consider the updates
\begin{align}\label{eq:exact_mirror_descent}
\pi^{n+1}
&= \argmin_{\pi \in \mathcal{P}(A|S)} G(\pi,\pi^{n}).
\end{align}

In the setting where the advantage function can be calculated exactly for all $s \in S$ and $a \in A$, \cite{lan2023policy,david_fisher} demonstrates convergence of the policy mirror descent for entropy regularised MDPs in the tabular setting.
However having direct access to the true advantage function for all $s \in S$ and $a \in A$ is unfeasible in general spaces. To that end, we consider the following finite dimensional parametrisation of the advantage function. Given some feature mapping $\phi : S \times A \to \mathbb{R}^N$, we firstly parametrise the state-action value function as $Q(s,a;\theta) := \inner{\theta}{\phi(s,a)}$ and define the approximate soft advantage function as in Definition \ref{def:approximate_advantage_def}.

\begin{definition}\label{def:approximate_advantage_def}
Let $Q(s,a;\theta) := \langle \theta, \phi(s,a)\rangle$, for some $\phi : S \times A \to \mathbb{R}^N$ and any $\tau > 0$. For all $s \in S$ and $a \in A$, the approximate advantage function is defined as 
\begin{equation}
\label{eq:approximate_advantage}
\begin{aligned}
A(s,a;\theta,\pi)
= Q(s,a;\theta) + \tau \ln \frac{d\pi}{d\mu}(s,a) - \int_{A}\bigg(Q(s,a;\theta) + \tau \ln \frac{d\pi}{d\mu}(s,a) \bigg) \pi(da|s).
\end{aligned}
\end{equation}
\end{definition}

\begin{definition}
\label{def:MSBE}
For some $\beta \in \mathcal{P}(S \times A)$, let $d_{\beta}^{\pi} \in \mathcal{P}(S\times A)$ be the state-action occupancy measure defined in Appendix \ref{sec:technical}. The Mean Squared Bellman Error (MSBE) is defined as
\begin{equation}
\begin{aligned}
&\mathrm{MSBE}(\theta,\pi):= \frac{1}{2}\int_{S \times A}
(Q(s,a;\theta) - \mathrm{T}^{\pi}_{\tau}Q(s,a;\theta))^2
d_{\beta}^{\pi}(da,ds).
\end{aligned}
\end{equation}
Moreover, the semi-gradient $g: \mathbb{R}^N \times \mathcal{P}(A|S) \to \mathbb{R}^N$ of the MSBE with respect to $\theta$ is given by
\begin{equation}
\label{eq:semi_gradient_def}
\begin{aligned}
g(\theta,\pi):= \int_{S \times A}(Q(s,a;\theta) - \mathrm{T}^{\pi}_{\tau}Q(s,a;\theta))
\phi(s,a)\, d_{\beta}^{\pi}(da,ds).
\end{aligned}
\end{equation}
\end{definition}

\begin{remark}
Given that $\beta \in \mathcal{P}(S \times A)$ has full support, by \eqref{eq:bellman_operator_Def} it holds that $\mathrm{MSBE}(\theta,\pi) = 0$ if and only if $Q(s,a;\theta) = Q^{\pi}_{\tau}(s,a)$ for all $s \in S$ and $a \in A$.
\end{remark}
\begin{assumption}\label{as:e_value}
Let $\beta \in \mathcal{P}(S \times A)$ be fixed. Then
\begin{equation}
\lambda_{\beta} := \lambda_{\min}\left( \int_{S \times A} \phi(s,a)\phi(s,a)^{\top} \, \beta(ds\,da) \right) > 0.
\end{equation}
\end{assumption}
Note that unlike the analogous assumptions in the literature \cite{stochastic_approx_application_AC}, Assumption \ref{as:e_value} is independent of the policy and only depends on some $\beta \in \mathcal{P}(S \times A)$ and $\phi : S \times A \to \mathbb{R}^{N}$ which are both design choices.

\begin{assumption}\label{as:bounded_phi}
For all $(s,a)\in S \times A$ it holds that $|\phi(s,a)|_2 \leq 1$.
\end{assumption}
Assumption \ref{as:bounded_phi} is for convention and is without loss of generality in the finite-dimensional case.
\begin{assumption}[$Q^{\pi}_{\tau}$-realisability]\label{as:linearmdp}
For all $\pi \in \Pi_{\mu}$ there exists $\theta_{\pi} \in \mathbb{R}^N$ such that $Q^{\pi}(s,a) = \inner{\theta_{\pi}}{\phi(s,a)}$ for all $(s,a)\in S \times A$.
\end{assumption}
Let us comment on using linear function approximation.
While non-linear function approximation (using deep neural networks) are widely used in practice, there are no proofs of convergence even for supervised learning with deep neural networks.
This analytical intractability of non-linear function approximations leads us to adopt linear function approximation.

The $Q^\pi$ realisability (Assumption \ref{as:linearmdp}) is common in the literature \cite{sean, meyn2024projected, csaba_planning, cayci, cayci_neural, stochastic_approx_application_AC}.
One classical family of MDPs which satisfy Assumption \ref{as:linearmdp} are Linear MDPs, see e.g \cite{li2021sampleefficient, yang2019sample, zanette2020frequentist}.
Moreover, Assumption \ref{as:linearmdp} holds in the limit $N \to \infty$ when $\phi_i$ are the basis functions of $L^2(\rho \otimes \mu)$ for some $\rho \otimes \mu \in \mathcal{P}(S \times A)$ \cite{brezis}.
With this perspective in mind, \cite{ma2024skill_quadraptor, ren2023stochastic_truncation} demonstrate empirical success of linear MDPs when the feature mapping $\phi :S \times A \to \mathbb{R}^{N}$ is careful truncation of the $L^2(\rho \otimes \mu)$ basis functions.

\section{Single loop actor-critic}
\label{sec:single_loop}

At each iteration of policy mirror descent \eqref{eq:exact_mirror_descent}, one would ideally replace the true advantage function with its parametrisation $A(s,a;\theta^* ,\pi^n)$ such that $\theta^* = \min_{\theta \in \mathbb{R}^{N}} \mathrm{MSBE}(\theta,\pi^n)$. Of course computationally this is very demanding, hence it is common in practice to perform a single step of temporal difference learning on the mean-squared Bellman error and use the updated parameters in the policy mirror descent update.

That is, let
\begin{align}\label{eq:inexact_mirror_descent}
\tilde{G}(\pi,\pi',\theta) 
& = \int_{S}\bigg(\int_{A} A(s,a;\theta,\pi')\pi(da|s) + \frac{1}{\lambda}\operatorname{KL}(\pi|\pi')(s)\bigg)d_{\rho}^{\pi'}(ds),
\end{align}

then for some critic step size $h>0$ and actor step size $\lambda > 0$, initial policy and parameters $\pi^0 = \pi_0  \in \Pi_{\mu}$, $\theta^0 = \theta_0 \in \mathbb{R}^{N}$, consider Algorithm \ref{algo:single_loop_ac}.

\begin{algorithm}[H]
\caption{}
\label{algo:single_loop_ac}
\begin{algorithmic}[1]
\small
\REQUIRE Critic step size $h>0$, actor step size $\lambda>0$, initial parameters $\theta^{0}\in\mathbb{R}^{N}$, initial policy $\pi^{0}\in\Pi_\mu$
\FOR{$n = 0,1,2,\ldots$}
\STATE $\theta^{n+1} \gets \theta^{n} - h\, g(\theta^{n}, \pi^{n})$
\STATE {$ \displaystyle \pi^{n+1}\gets \argmin_{\pi \in \mathcal{P}(A|S)}\tilde{G}(\pi,\pi^n,\theta^{n+1}) $}
\ENDFOR
\end{algorithmic}
\end{algorithm}

As pointed out in Section \ref{sec:contributions}, it is still unclear when Algorithm \ref{algo:single_loop_ac} remains stable for entropy regularised MDPs in general spaces. We address this in the following section.

\subsection{Stability}

To ease notation in the main results, for each $s \in S$ and $a \in A$ we let
\begin{equation}
l_{n}(s,a) = \log \frac{d\pi^n}{d\mu}(s,a)
- \int_{A} \log \frac{d\pi^n}{d\mu}(s,a')\mu(da'),
\end{equation}
\begin{equation}
\mathrm{K}_n:= \sup_{s \in S} \operatorname{KL}(\pi^n(\cdot|s)|\mu), \quad \Gamma:= (1-\gamma)(1-\sqrt{\gamma})\lambda_{\beta}.
\end{equation}

To establish conditions for stability of Algorithm \ref{algo:single_loop_ac}, we firstly prove a coupled recursion for the critic parameters.

\begin{lemma}\label{lemma:theta_recursion_1}
Let Assumption \ref{as:e_value} and \ref{as:bounded_phi} hold. Let $0<h \leq \frac{\Gamma}{6(1+\gamma)^2}$ and for some $\theta^0 = \theta_0 \in \mathbb{R}^{N}$ and $\pi^0 = \pi_0 \in \Pi_{\mu}$, let $\{\theta^{n},\pi^n\}_{n\in \mathbb{N}}$ be the iterates for Algorithm \ref{algo:single_loop_ac}. Then for all $n \in \mathbb{N}$ it holds that
\begin{equation}
\begin{aligned}
|\theta^{n+1}|_2^2
&\leq |\theta_0|_2^2
+ \frac{\tau^2\gamma^2\left(3h + \frac{2}{\Gamma}\right)}{\Gamma - 3h(1+\gamma)^2}
\sup_{0 \leq r \leq n}\mathrm{K}_{r}^2  + \frac{|c|_{B_b(S\times A)}^2\left(3h + \frac{2}{\Gamma}\right)}{\Gamma - 3h(1+\gamma)^2}.
\end{aligned}
\end{equation}
\end{lemma}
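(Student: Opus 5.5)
The plan is to write the semi-gradient as an affine map in $\theta$ and turn the TD step into a one-step contraction-plus-forcing recursion for $|\theta^n|_2^2$. Put $\Phi_\pi := \int_{S\times A} \phi(s,a)\big(\phi(s,a) - \gamma\int \phi(s',a')P^{\pi}(ds',da'|s,a)\big)^{\top} d^\pi_\beta(ds,da)$ and $b_\pi := \int_{S\times A}\big(c(s,a) + \tau\gamma\int_S \operatorname{KL}(\pi(\cdot|s')|\mu)P(ds'|s,a)\big)\phi(s,a)\,d^\pi_\beta(ds,da)$. By \eqref{eq:bellman_operator_Def} and \eqref{eq:semi_gradient_def},
\[
g(\theta,\pi) = \Phi_\pi\theta - b_\pi .
\]
Assumption \ref{as:bounded_phi} then gives $|\Phi_{\pi}\theta|_2 \le (1+\gamma)|\theta|_2$ and $|b_{\pi^n}|_2 \le |c|_{B_b(S\times A)} + \tau\gamma \mathrm{K}_n$.

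The key step, and the one I expect to be the main obstacle, is the uniform strong monotonicity $\langle \theta, \Phi_\pi\theta\rangle \ge \Gamma|\theta|_2^2$ for every $\pi$. I would follow the Tsitsiklis--Van Roy argument adapted to the occupancy measure. Write $f = \langle\theta,\phi\rangle$ and $\|f\|^2 := \int f^2\, d^\pi_\beta$. From $d^\pi_\beta = (1-\gamma)\sum_k \gamma^k \beta (P^\pi)^k$ one gets $\gamma\, d^\pi_\beta P^\pi = d^\pi_\beta - (1-\gamma)\beta \le d^\pi_\beta$. Combining this with Jensen gives $\int (P^\pi f)^2 d^\pi_\beta \le \int f^2\, d^\pi_\beta P^\pi \le \gamma^{-1}\|f\|^2$. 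Cauchy--Schwarz then yields $\gamma\int f\,P^\pi f\, d^\pi_\beta \le \sqrt{\gamma}\|f\|^2$, hence
\[
\langle\theta,\Phi_\pi\theta\rangle \ge (1-\sqrt\gamma)\|f\|^2 .
\]
Finally $d^\pi_\beta \ge (1-\gamma)\beta$ together with Assumption \ref{as:e_value} gives $\|f\|^2 \ge (1-\gamma)\lambda_\beta|\theta|_2^2$, so the product is exactly $\Gamma|\theta|_2^2$. The care needed is in making the kernel manipulations rigorous on Polish spaces, and in checking that the occupancy measure of Appendix \ref{sec:technical} satisfies the two inequalities used.

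With this in hand, expand $|\theta^{n+1}|_2^2 = |\theta^n|_2^2 - 2h\langle\theta^n,\Phi_{\pi^n}\theta^n - b_{\pi^n}\rangle + h^2|\Phi_{\pi^n}\theta^n - b_{\pi^n}|_2^2$. Bound the cross term using Young's inequality $2|\theta||b| \le \Gamma|\theta|^2 + |b|^2/\Gamma$. Bound the quadratic term by splitting $|x+y|^2$ with weights chosen to produce the constants $3h(1+\gamma)^2$ and $3h$, and use $|b_{\pi^n}|_2^2 \le 2|c|^2 + 2\tau^2\gamma^2\mathrm{K}_n^2$ (up to rearranging constants). This gives
\[
|\theta^{n+1}|_2^2 \le \big(1 - h(\Gamma - 3h(1+\gamma)^2)\big)|\theta^n|_2^2 + h\Big(3h+\tfrac{2}{\Gamma}\Big)\big(\tau^2\gamma^2\mathrm{K}_n^2 + |c|^2_{B_b(S\times A)}\big).
\]
The step-size restriction $h\le \Gamma/(6(1+\gamma)^2)$ ensures $\kappa:=\Gamma-3h(1+\gamma)^2 \ge \Gamma/2>0$ and $h\kappa<1$.

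The conclusion is then an elementary induction. If $a_{r+1}\le (1-h\kappa)a_r + hC_n$ for all $r\le n$, with $C_n := (3h+2/\Gamma)\big(\tau^2\gamma^2\sup_{r\le n}\mathrm{K}_r^2 + |c|^2\big)$, then $a_{n+1}\le \max(a_0, C_n/\kappa)\le a_0 + C_n/\kappa$. Dividing $C_n$ by $\kappa = \Gamma - 3h(1+\gamma)^2$ gives exactly the stated bound.
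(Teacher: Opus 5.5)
Your proposal is correct and follows the same route as the paper. You expand $|\theta^n - h g(\theta^n,\pi^n)|_2^2$, control the cross term by coercivity plus Young's inequality and the quadratic term by $(a+b+c)^2\le 3(a^2+b^2+c^2)$, which gives exactly the paper's one-step recursion, and then iterate.

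There are two minor differences. First, you prove the coercivity estimate $\langle\theta,\Phi_\pi\theta\rangle\ge\Gamma|\theta|_2^2$ yourself, via $\gamma\, d^\pi_\beta P^\pi\le d^\pi_\beta$ and $d^\pi_\beta\ge(1-\gamma)\beta$, whereas the paper imports it as Lemma 5.2 of its earlier work. Second, you close with a max-induction rather than summing a geometric series; the $h\kappa<1$ you need there holds because $\lambda_\beta\le 1$ under Assumption \ref{as:bounded_phi}.
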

See Appendix \ref{sec:proof_theta_recursion} for a proof.

Lemma \ref{lemma:theta_recursion_1} shows that the behaviour of the norm of the critic parameters $|\theta^{n+1}|_2$ depends linearly on $\mathrm{K}_{n}$, which may explode in general action spaces if the critic is not solved to sufficient accuracy.
On this note, a direct corollary of Lemma \ref{lemma:theta_recursion_1} is that we automatically arrive at stability for action spaces with finite cardinality when the reference measure in \eqref{eq:value_function} has full support.
\begin{corollary}\label{cor:finite_action_space}
Let Assumption \ref{as:e_value} and \ref{as:bounded_phi} hold and let $0<h \leq \frac{\Gamma}{6(1+\gamma)^2}$. Suppose that $|A| < \infty$ and let $\mu \in \mathcal{P}(A)$ be such that $\min_{a \in A} \mu(a) > 0$. Then for all $n \in \mathbb{N}$ there exists $R_1 > 0$ such that  
\begin{equation}
|\theta^{n}|_2 \leq R_1.
\end{equation}
\end{corollary}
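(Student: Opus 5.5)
The plan is to apply Lemma~\ref{lemma:theta_recursion_1} directly. The only remaining task is to show that $\sup_{r}\mathrm{K}_r$ is bounded by a constant that depends on $|A|$ and $\min_a\mu(a)$ but not on the iterates. Once such a uniform bound $\mathrm{K}_r \le \bar K$ holds for every $r\in\mathbb{N}$, Lemma~\ref{lemma:theta_recursion_1} yields, for all $n$,
\begin{equation}
|\theta^{n+1}|_2^2 \le |\theta_0|_2^2 + \frac{\left(3h+\frac{2}{\Gamma}\right)\left(\tau^2\gamma^2\bar K^2 + |c|_{B_b(S\times A)}^2\right)}{\Gamma-3h(1+\gamma)^2}.
\end{equation}
The restriction $h\le \Gamma/(6(1+\gamma)^2)$ gives $\Gamma-3h(1+\gamma)^2\ge \Gamma/2>0$, so the right-hand side is finite and independent of $n$. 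We then take $R_1$ to be the square root of the maximum of this quantity and $|\theta_0|_2^2$, which covers $n=0$ as well.

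The key step is a bound on $\operatorname{KL}(\pi(\cdot|s)|\mu)$ valid for every probability measure $\pi(\cdot|s)$ on the finite set $A$. If $\pi(\cdot|s)\ll\mu$, then
\begin{equation}
\operatorname{KL}(\pi(\cdot|s)|\mu)=\sum_{a\in A}\pi(a|s)\ln\frac{\pi(a|s)}{\mu(a)} \le \sum_{a}\pi(a|s)\ln\frac{1}{\mu(a)} \le \ln\frac{1}{\min_{a}\mu(a)},
\end{equation}
using $\pi(a|s)\le 1$. Absolute continuity is automatic here, since $\min_a\mu(a)>0$ means every measure on $A$ is absolutely continuous with respect to $\mu$. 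Hence $\bar K:=-\ln\min_a\mu(a)<\infty$ works uniformly in $s$ and in the policy. When $\mu$ is uniform this recovers the bound $\ln|A|$ mentioned in Section~\ref{sec:contributions}.

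The only point needing care is that the iterates $\pi^r$ are well defined elements of $\mathcal{P}(A|S)$, so that $\mathrm{K}_r$ makes sense. Since $A$ is finite, the argmin in Algorithm~\ref{algo:single_loop_ac} is the Gibbs-type update $\pi^{r+1}(a|s)\propto \pi^r(a|s)\exp(-\lambda A(s,a;\theta^{r+1},\pi^r))$. This update remains a probability measure on $A$, so the uniform KL bound applies to it regardless of the size of $\theta^{r+1}$. I do not expect a real obstacle: all the work sits in Lemma~\ref{lemma:theta_recursion_1}, and the corollary just removes the dependence on $\sup_r \mathrm{K}_r$.
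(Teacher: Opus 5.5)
Your argument is correct and follows the paper's proof: bound $\mathrm{K}_r$ uniformly using finiteness of $A$ and full support of $\mu$, feed this into Lemma~\ref{lemma:theta_recursion_1}, and use $h\le\Gamma/(6(1+\gamma)^2)$ to keep $\Gamma-3h(1+\gamma)^2\ge\Gamma/2$. Your constant $-\ln\min_a\mu(a)$ is the right one, whereas the paper's bound $\log|A|$ holds uniformly over policies only when $\mu$ is uniform (a policy concentrated near an atom with $\mu(a)<1/|A|$ has KL larger than $\log|A|$).
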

For the more general setting of Polish action spaces, we do not directly arrive at stability from Lemma \ref{lemma:theta_recursion_1}. To that end, Lemma \ref{lemma:log_recursion} demonstrates a basic recursion for the normalised log densities which we can connect back to the KL divergence through Lemma \ref{lemma:technical_KL_ln}.

\begin{lemma}\label{lemma:log_recursion}
Let Assumption \ref{as:bounded_phi} hold. Then for all $n \in \mathbb{N}$ it holds that
\begin{equation}
|l_{n+1}|_{B_b(S\times A)}
\leq (1-\tau\lambda) |l_{n}|_{B_b(S\times A)} + 2\lambda|\theta^{n+1}|_2.
\end{equation}
\end{lemma}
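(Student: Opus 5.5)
We first compute the minimiser of $\tilde G(\cdot,\pi^n,\theta^{n+1})$ in closed form; this reduces the lemma to a one-line estimate. Since $d_\rho^{\pi^n}$ is a fixed measure on $S$, the minimisation decouples statewise. For each $s$ we minimise
\[
\int_A A(s,a;\theta^{n+1},\pi^n)\pi(da|s)+\tfrac1\lambda \operatorname{KL}(\pi|\pi^n)(s).
\]
By the Gibbs variational principle (Donsker--Varadhan), the minimiser is
\[
\frac{d\pi^{n+1}}{d\pi^n}(a|s)\propto \exp\big(-\lambda A(s,a;\theta^{n+1},\pi^n)\big).
\]
The term $\int_A(\cdots)\pi^n(da|s)$ in Definition~\ref{def:approximate_advantage_def} depends only on $s$, so it is absorbed into the normalising constant. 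Taking logarithms with respect to $\mu$ gives, up to an additive function $Z_n(s)$ of $s$ alone,
\[
\log\frac{d\pi^{n+1}}{d\mu}(s,a)=(1-\tau\lambda)\log\frac{d\pi^n}{d\mu}(s,a)-\lambda\langle\theta^{n+1},\phi(s,a)\rangle+Z_n(s).
\]
If $\pi^n\in\Pi_\mu$ and $\theta^{n+1}$ is finite, then $\log\frac{d\pi^n}{d\mu}$ is bounded (it differs from a bounded $f$ by a function of $s$ bounded via $|f|_\infty$). Hence by induction $\pi^{n+1}\in\Pi_\mu$, and all quantities are bounded, which justifies the computation.

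Next, we pass to the normalised log densities $l_n$. Subtracting the $\mu$-average over $a$ removes every function of $s$ alone, in particular $Z_n(s)$. This yields the exact identity
\[
l_{n+1}(s,a)=(1-\tau\lambda)\,l_n(s,a)-\lambda\Big(\langle\theta^{n+1},\phi(s,a)\rangle-\int_A\langle\theta^{n+1},\phi(s,a')\rangle\mu(da')\Big).
\]

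Finally, we take the supremum over $(s,a)$. By Cauchy--Schwarz and Assumption~\ref{as:bounded_phi}, each of the two terms in the bracket is at most $|\theta^{n+1}|_2$ in absolute value, so the bracket is bounded by $2|\theta^{n+1}|_2$. Using $|1-\tau\lambda|=1-\tau\lambda$ then gives the claim.

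The main obstacle is the statewise Gibbs minimisation in general spaces. One must justify that the $d_\rho^{\pi^n}$-integrated problem is solved by pointwise minimisers, and that the argmin lies in $\mathcal{P}(A|S)$ as a measurable kernel. I would handle this by exhibiting the explicit kernel, which is measurable because it is given by a bounded measurable density. Its optimality follows from the identity
\[
\text{objective}(\pi)-\text{objective}(\pi^{n+1})=\tfrac1\lambda\operatorname{KL}(\pi|\pi^{n+1})(s)\ge 0.
\]

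A further point is the sign of $1-\tau\lambda$: the bound as stated implicitly uses $\tau\lambda\le1$. I would make this implicit requirement explicit; otherwise the first term should carry $|1-\tau\lambda|$ in place of $1-\tau\lambda$.
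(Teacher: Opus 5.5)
Your proposal is correct and follows essentially the same route as the paper. Both arguments take the Gibbs form of the statewise mirror-descent minimiser (the paper cites Lemma~1.4.3 of Dupuis--Ellis, where you verify optimality directly via the KL identity), cancel the $s$-dependent normaliser after centring under $\mu$, obtain the exact identity $l_{n+1}=(1-\tau\lambda)l_n-\lambda\bigl(Q(\cdot,\cdot;\theta^{n+1})-\int_A Q(\cdot,a';\theta^{n+1})\,\mu(da')\bigr)$, and conclude with the sup norm and Cauchy--Schwarz. Your caveat that the stated bound needs $\tau\lambda\le 1$ is well taken: the paper uses this implicitly here and only imposes $0<\tau\lambda<1$ in the results where the lemma is applied.
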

See Appendix \ref{sec:proof_log_recursion} for a proof.

Thus by connecting Lemma \ref{lemma:theta_recursion_1} and Lemma \ref{lemma:log_recursion}, we can derive sufficient conditions under which we achieve stability of Algorithm \ref{algo:single_loop_ac} for Polish action spaces.

\begin{theorem}\label{thm:KL_BDD_small_gamma}
Let Assumptions \ref{as:e_value} and \ref{as:bounded_phi} hold. Let \[0 < h \leq\frac{1}{2} \min\left\{
\frac{\Gamma}{3(1+\gamma)^2},
\frac{\Gamma^2 - 16\gamma^2}{\Gamma\bigl(24\gamma^2 + 3(1+\gamma)^2\bigr)}
\right\}\] and $0 < \tau\lambda < 1$. Moreover, for some $\theta^0 = \theta_0 \in \mathbb{R}^{N}$ and $\pi^0 = \pi_0 \in \Pi_{\mu}$ let $\{\theta^n,\pi^n\}_{n \in \mathbb{N}}$ be the iterates of Algorithm \ref{algo:single_loop_ac}. 
Then there exists $R_2\geq 0$ such that for $\frac{32\gamma^2}{\Gamma^2} < 1$ and for all $n \in \mathbb{N}$ and $s \in S$ it holds that
\begin{equation}
\mathrm{KL}(\pi^n(\cdot|s) | \mu) + |\theta^{n}|_2\leq R_2.
\end{equation}
\end{theorem}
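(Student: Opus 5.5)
We combine Lemma \ref{lemma:theta_recursion_1} and Lemma \ref{lemma:log_recursion} into a closed recursion for $L_n := |l_n|_{B_b(S\times A)}$. We then show that, under the smallness conditions on $h$ and $\gamma/\Gamma$, the coefficient in front of the feedback term is strictly below one, so $L_n$ stays bounded.

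\textbf{Step 1: relate KL to $L_n$.} Using Lemma \ref{lemma:technical_KL_ln} (or directly), we compare $\mathrm{KL}(\pi^n(\cdot|s)|\mu)$ with $l_n$. Since $\frac{d\pi^n}{d\mu}(s,a) = e^{l_n(s,a)}/\int_A e^{l_n(s,a')}\mu(da')$ and $\int_A l_n(s,a)\mu(da)=0$, Jensen gives $\int e^{l_n}d\mu\ge 1$. Hence
\[
\mathrm{KL}(\pi^n(\cdot|s)|\mu)=\int_A l_n\,d\pi^n - \ln\int_A e^{l_n}d\mu \le L_n .
\]
The same identity also shows $\mathrm{KL}\le 2L_n$ in any case, so $\mathrm{K}_n\le L_n$.

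\textbf{Step 2: closed recursion.} Write $C_1 = \frac{3h+2/\Gamma}{\Gamma-3h(1+\gamma)^2}$. Using $\sqrt{x+y+z}\le\sqrt x+\sqrt y+\sqrt z$ in Lemma \ref{lemma:theta_recursion_1},
\[
|\theta^{n+1}|_2 \le |\theta_0|_2 + \tau\gamma\sqrt{C_1}\,\sup_{r\le n}L_r + |c|_{B_b}\sqrt{C_1}.
\]
Substituting into Lemma \ref{lemma:log_recursion} gives
\[
L_{n+1}\le (1-\tau\lambda)L_n + 2\lambda\tau\gamma\sqrt{C_1}\,\sup_{r\le n}L_r + 2\lambda B,
\qquad B:=|\theta_0|_2+|c|_{B_b}\sqrt{C_1}.
\]
Let $M_n=\sup_{r\le n}L_r$. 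Then $L_{n+1}\le (1-\tau\lambda(1-2\gamma\sqrt{C_1}))M_n + 2\lambda B$. If $\kappa:=1-2\gamma\sqrt{C_1}>0$, a simple induction shows $M_n\le \max\{M_0, 2B/(\tau\kappa)\}$. This uses $0<\tau\lambda<1$ so that the coefficient $1-\tau\lambda$ is nonnegative: either $M_{n+1}=M_n$, or $L_{n+1}\le (1-\tau\lambda\kappa)M_n+2\lambda B\le\max\{M_n,2B/(\tau\kappa)\}$. Note that $M_0=L_0<\infty$ since $\pi_0\in\Pi_\mu$.

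\textbf{Step 3: verify $\kappa>0$, the main obstacle.} We need $4\gamma^2 C_1<1$, i.e.
\[
4\gamma^2\Gamma(3h)+8\gamma^2 < \Gamma^2 - 3h\Gamma(1+\gamma)^2 ,
\]
equivalently $h\,\Gamma\,(12\gamma^2+3(1+\gamma)^2) < \Gamma^2-8\gamma^2$. The hypothesis $h\le \frac{\Gamma^2-16\gamma^2}{2\Gamma(24\gamma^2+3(1+\gamma)^2)}$ together with $32\gamma^2<\Gamma^2$ (which makes the numerator positive and leaves slack) implies this with room to spare. The first constraint on $h$ ensures the denominator $\Gamma-3h(1+\gamma)^2\ge\Gamma/2>0$, so Lemma \ref{lemma:theta_recursion_1} applies. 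The exact constants in the paper may come from a slightly cruder route, such as working with squares and $(a+b)^2\le2a^2+2b^2$, which explains the $16$ and $32$. If our sharper route fails somewhere, we would fall back to squaring the recursion in the same way; in either case the condition $32\gamma^2/\Gamma^2<1$ is what makes the contraction constant less than one.

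\textbf{Conclusion.} Combining Steps 1–3, $\mathrm{K}_n\le M_n$ is uniformly bounded. Feeding this back into Lemma \ref{lemma:theta_recursion_1} bounds $|\theta^n|_2$ uniformly, which yields $R_2$.
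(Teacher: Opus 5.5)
Your proof is correct. It shares the paper's overall strategy: close the loop between Lemma \ref{lemma:theta_recursion_1} and Lemma \ref{lemma:log_recursion}, then contract against a running supremum. The execution differs in a way that matters.

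\textbf{The paper's route.} It unrolls the log recursion and squares it. It then applies Cauchy--Schwarz with $\sum_m(1-\tau\lambda)^m\le 1/(\tau\lambda)$, so that the squared bound of Lemma \ref{lemma:theta_recursion_1} can be inserted as is. It also uses the cruder estimate $\mathrm{KL}(\pi^n(\cdot|s)|\mu)\le 2|l_n|_{B_b(S\times A)}$ from Lemma \ref{lemma:technical_KL_ln}. This gives $|l_{n+1}|^2_{B_b(S\times A)}\le\kappa(h)\mathrm{L}_n^2+C_2(h)$ with $\kappa(h)=16\gamma^2C_1$ in your notation, closed by Lemma \ref{lemma:sup_lemma}.

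\textbf{Your route.} You take square roots in Lemma \ref{lemma:theta_recursion_1}, stay with the one-step recursion, and run the sup-induction on $M_n$ directly. You also use the sharper Jensen bound $\mathrm{KL}(\pi^n(\cdot|s)|\mu)\le|l_n|_{B_b(S\times A)}$, valid because $\int_A l_n\,d\mu=0$. The contraction condition then becomes $4\gamma^2C_1<1$. Your check that this follows from the stated step-size bound is right, and it uses only that bound, not the hypothesis $32\gamma^2<\Gamma^2$.

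\textbf{What each buys.} The gain from your route is real. The paper's proof verifies $\kappa(h)<1$ only below the smaller threshold $\frac{\Gamma^2-32\gamma^2}{\Gamma(48\gamma^2+3(1+\gamma)^2)}$, not the threshold with $16$ and $24$ in the statement. Its squaring step also omits the factor $2$ from $(a+b)^2\le 2a^2+2b^2$; restoring it doubles $\kappa(h)$. So your unsquared route is the one that actually covers the theorem as stated. What the paper's version adds is a constant made independent of $h$ via monotonicity of $\kappa$ and $C_2$ in $h$. You can get the same, since $C_1$ is increasing in $h$, and your bound is already independent of $\lambda$.

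Finally, drop the hedge in Step~3. No fallback to squaring is needed, and in your argument the condition $32\gamma^2<\Gamma^2$ is not what drives the contraction.
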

See Appendix \ref{sec:proof_KL_bdd_small_gamma} for a proof.

\begin{remark}
Observe that the results of this section, Lemma \ref{lemma:theta_recursion_1}, Lemma \ref{lemma:log_recursion} and Theorem \ref{thm:KL_BDD_small_gamma} do not require $Q^{\pi}_{\tau}$-realisability and thus holds for general MDPs given that the features $\phi : S \times A \to \mathbb{R}^N$ satisfy Assumptions \ref{as:e_value} and \ref{as:bounded_phi}.
\end{remark}
Theorem \ref{thm:KL_BDD_small_gamma} suggests performing a single step of TD learning for each mirror descent update may be insufficient to establish stability of Algorithm \ref{algo:single_loop_ac} for all $\gamma \in (0,1)$ in Polish action spaces. 
In turn, Theorem \ref{thm:KL_BDD_small_gamma} indicates that one must have more control over the temporal difference learning in order to obtain a more accurate parametrisation of the advantage function. 
With this in mind, in Section \ref{sec:double_loop} we analyse the setting where one performs $M(n) \geq 1$ steps of temporal difference instead of a single step, where the number of inner steps depend on the policy iterate $n \in \mathbb{N}$, see Algorithm \ref{algo:double_loop_ac}.
\subsection{Convergence}
To establish the convergence of Algorithm \ref{algo:single_loop_ac}, we firstly establish the following continuity property for consecutive policies under Algorithm \ref{algo:single_loop_ac}. 
\begin{theorem}\label{thm:lipschitz_holder} Suppose that there exists $R \geq 0$ such that $ \operatorname{KL}(\pi^{n}(\cdot|s)|\mu) \leq R$ for all $s \in S$ and $n \in \mathbb{N}$. Then there exists $\alpha_1, \alpha_2 > 0$ such that for all $n \in \mathbb{N}$ and $s \in S$ it holds that
\begin{align}
&\left|Q^{\pi^{n+1}}_{\tau} - Q^{\pi^{n}}_{\tau} \right|_{B_b(S \times A)} \leq  \alpha_1 \sup_{s \in S}\operatorname{KL}(\pi^{n+1}|\pi^n)(s)^{\frac{1}{2}}+ \alpha_2\sup_{s \in S}\operatorname{KL}(\pi^{n+1}|\pi^n)(s).
\end{align}
\end{theorem}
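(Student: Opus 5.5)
We will estimate $Q^{\pi^{n+1}}_{\tau}-Q^{\pi^n}_{\tau}$ through the fixed-point characterisation of $Q^\pi_\tau$ with respect to $\mathrm{T}^\pi_\tau$. Write $\pi=\pi^{n+1}$ and $\pi'=\pi^n$. Since $Q^{\pi}_\tau=\mathrm{T}^{\pi}_\tau Q^\pi_\tau$ and $Q^{\pi'}_\tau=\mathrm{T}^{\pi'}_\tau Q^{\pi'}_\tau$, we have
\begin{equation}
Q^{\pi}_\tau-Q^{\pi'}_\tau=\big(\mathrm{T}^{\pi}_\tau Q^{\pi}_\tau-\mathrm{T}^{\pi}_\tau Q^{\pi'}_\tau\big)+\big(\mathrm{T}^{\pi}_\tau Q^{\pi'}_\tau-\mathrm{T}^{\pi'}_\tau Q^{\pi'}_\tau\big).
\end{equation}
The first bracket is bounded by $\gamma|Q^\pi_\tau-Q^{\pi'}_\tau|_{B_b}$ by the contraction property, so after rearranging
\begin{equation}
|Q^{\pi}_\tau-Q^{\pi'}_\tau|_{B_b}\le \tfrac{1}{1-\gamma}\,|\mathrm{T}^{\pi}_\tau Q^{\pi'}_\tau-\mathrm{T}^{\pi'}_\tau Q^{\pi'}_\tau|_{B_b}.
\end{equation}
By \eqref{eq:bellman_operator_Def}, the remaining difference equals
\begin{equation}
\gamma\int_S\Big[\int_A Q^{\pi'}_\tau(s',a')(\pi-\pi')(da'|s')+\tau\big(\operatorname{KL}(\pi(\cdot|s')|\mu)-\operatorname{KL}(\pi'(\cdot|s')|\mu)\big)\Big]P(ds'|s,a).
\end{equation}
It therefore suffices to bound two terms pointwise in $s'$.

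\emph{Linear term.} The first term is at most $|Q^{\pi'}_\tau|_{B_b}\,\|\pi(\cdot|s')-\pi'(\cdot|s')\|_{TV}$. By Pinsker's inequality this is at most $|Q^{\pi'}_\tau|_{B_b}\sqrt{2\operatorname{KL}(\pi|\pi')(s')}$. The uniform KL bound gives $|Q^{\pi'}_\tau|_{B_b}\le (|c|_{B_b}+\gamma\tau R)/(1-\gamma)$, uniformly in $n$. This produces the square-root term, with a constant contributing to $\alpha_1$.

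\emph{Entropy term.} We use the identity
\begin{equation}
\operatorname{KL}(\pi|\mu)-\operatorname{KL}(\pi'|\mu)=\operatorname{KL}(\pi|\pi')+\int_A \ln\tfrac{d\pi'}{d\mu}\,d(\pi-\pi').
\end{equation}
Both policies lie in $\Pi_\mu$: the mirror descent step gives $\pi^{n+1}\propto(\pi^n)^{1/(1+\tau\lambda)}$ times an exponential of a bounded function, so all densities are equivalent and the identity is legitimate. The first summand yields the linear term, giving $\alpha_2=\gamma\tau/(1-\gamma)$. For the second summand we need $\ln\frac{d\pi'}{d\mu}$ bounded uniformly in $n$. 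We replace it by $l_n$, which is allowed because constants in $a'$ integrate to zero against $\pi-\pi'$. The integral is then at most $|l_n|_{B_b}\sqrt{2\operatorname{KL}(\pi|\pi')(s')}$.

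\emph{Uniform bound on $|l_n|_{B_b}$: the main obstacle.} The hypothesis only controls $\operatorname{KL}(\pi^n|\mu)\le R$, not the sup-norm of the log-density. The plan is to use Lemma~\ref{lemma:log_recursion} together with a bound on $\theta^{n+1}$. Lemma~\ref{lemma:theta_recursion_1} gives $\sup_n|\theta^n|_2\le C(R)$ from $\sup_r \mathrm{K}_r\le R$. Iterating Lemma~\ref{lemma:log_recursion} with $0<\tau\lambda<1$ then gives
\begin{equation}
|l_n|_{B_b}\le |l_0|_{B_b}+\tfrac{2}{\tau}C(R)=:L.
\end{equation}
If the step-size assumptions of those lemmas are not meant to be in force, a fallback is to interpret "there exists $\alpha_1$" as allowing dependence on the run, taking $\alpha_1$ to depend on $\sup_n|l_n|_{B_b}$, finite under the stability regime of Theorem~\ref{thm:KL_BDD_small_gamma}.

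Combining the three bounds and taking suprema over $s'$ gives the claim with
\begin{equation}
\alpha_1=\frac{\sqrt2\,\gamma}{1-\gamma}\Big(\frac{|c|_{B_b}+\gamma\tau R}{1-\gamma}+\tau L\Big),\qquad \alpha_2=\frac{\gamma\tau}{1-\gamma}.
\end{equation}
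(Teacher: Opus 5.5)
Your proposal is correct and follows essentially the paper's proof: the same Bellman fixed-point decomposition, the same identity $\operatorname{KL}(\pi|\mu)-\operatorname{KL}(\pi'|\mu)=\operatorname{KL}(\pi|\pi')+\int l_n\,d(\pi-\pi')$, Pinsker, and uniform bounds on $|Q^{\pi^n}_\tau|_{B_b(S\times A)}$ and $|l_n|_{B_b(S\times A)}$, where your derivation of the $l_n$ bound via Lemma~\ref{lemma:theta_recursion_1} and Lemma~\ref{lemma:log_recursion} spells out what the paper leaves tacit (the paper additionally assumes $|\theta^n|_2\le R$ and cites Lemma~\ref{lemma:everyhing_bdd_if_KL_bdd}). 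One harmless slip: with the linearised entropy in $A(s,a;\theta,\pi^n)$, the update gives $\frac{d\pi^{n+1}}{d\mu}(s,a)\propto e^{-\lambda Q(s,a;\theta^{n+1})}\big(\frac{d\pi^n}{d\mu}(s,a)\big)^{1-\tau\lambda}$, so the exponent is $1-\tau\lambda$ rather than $1/(1+\tau\lambda)$; the equivalence of the densities, which is all you use, is unaffected.
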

See Appendix \ref{sec:proof_lipschitz} for a proof.
Theorem \ref{thm:single_loop_conv} then demonstrates that the convergence of Algorithm \ref{algo:single_loop_ac} is determined purely by the timescale separation $\eta := \frac{\lambda}{h}$.
\begin{theorem}\label{thm:single_loop_conv}
For some $\theta^0=\theta_0\in\mathbb{R}^N$ and $\pi^0=\pi_0\in\Pi_\mu$ let $\{\theta^n,\pi^n\}_{n\in\mathbb{N}}$ be the iterates of Algorithm \ref{algo:single_loop_ac}. Suppose that the conditions of Corollary \ref{cor:finite_action_space} or Theorem \ref{thm:KL_BDD_small_gamma} hold. Then there exists a constant $C\geq0$ such that for any $\rho\in\mathcal{P}(S)$ and all $n\in\mathbb{N}$, it holds that
\begin{equation}\label{eq:single_loop_conv_rate_simple}
\min_{0\le r\le n-1}\Big(V^{\pi^r}_{\tau}(\rho)-V^{\pi^*}_{\tau}(\rho)\Big)
\leq C\bigg(
\frac{1}{\sqrt{nh}}
+
\frac{\lambda}{h}\bigg).
\end{equation}
\end{theorem}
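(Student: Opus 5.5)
The argument combines the standard inexact policy mirror descent analysis with a Lyapunov-type bound on the critic error $\theta^n-\theta_{\pi^n}$. Throughout I assume realisability (Assumption \ref{as:linearmdp}), so that $Q^{\pi^n}_\tau=\langle\theta_{\pi^n},\phi\rangle$ and the semi-gradient satisfies $g(\theta_{\pi^n},\pi^n)=0$. I also use the uniform bound $\operatorname{KL}(\pi^n(\cdot|s)|\mu)+|\theta^n|_2\le R$ supplied by Corollary \ref{cor:finite_action_space} or Theorem \ref{thm:KL_BDD_small_gamma}. The first consequence of this bound is that $|\theta_{\pi^n}|_2$ and $|l_n|_{B_b}$ are uniformly bounded; the latter follows from Lemma \ref{lemma:log_recursion} and $\tau\lambda<1$. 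Hence all advantages, true and approximate, are bounded by a constant $C_0$.

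\textbf{Step 1: one-step mirror descent inequality with inexact advantage.} The minimiser of $\tilde G$ has the explicit form $\frac{d\pi^{n+1}}{d\pi^n}\propto \exp(-\lambda A(\cdot;\theta^{n+1},\pi^n))$. From the three-point lemma for KL I obtain, for every $s$,
\begin{equation}
\lambda\!\int_A A(s,a;\theta^{n+1},\pi^n)(\pi^{n+1}-\pi^*)(da|s)+\operatorname{KL}(\pi^{n+1}|\pi^n)(s)\le \operatorname{KL}(\pi^*|\pi^n)(s)-\operatorname{KL}(\pi^*|\pi^{n+1})(s).
\end{equation}
Next I replace $A(\cdot;\theta^{n+1},\pi^n)$ by $A^{\pi^n}_\tau$. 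The difference is bounded by $2|\theta^{n+1}-\theta_{\pi^n}|_2$, using Assumption \ref{as:bounded_phi} and the fact that the $\log$-density terms cancel. I then apply the performance difference lemma for entropy-regularised MDPs, integrating against $d^{\pi^*}_\rho$. Summing over $r=0,\dots,n-1$ and telescoping the KL terms, where $\operatorname{KL}(\pi^*|\pi^0)$ is bounded because $\pi^*,\pi^0\in\Pi_\mu$, gives
\begin{equation}
\lambda(1-\gamma)\sum_{r<n}\big(V^{\pi^{r}}_\tau(\rho)-V^{*}_\tau(\rho)\big)\lesssim C+\lambda\sum_{r<n}|\theta^{r+1}-\theta_{\pi^r}|_2 .
\end{equation}
This bound is stated loosely: the mismatch between $V^{\pi^{r+1}}$ and $V^{\pi^r}$ is absorbed using the boundedness of advantages together with a term $\lambda C_0^2$. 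An extra $\lambda$ per step is acceptable because it is dominated by $\lambda/h$.

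\textbf{Step 2: critic tracking.} Let $e_n=\theta^n-\theta_{\pi^n}$. Using the strong monotonicity $\langle g(\theta,\pi),\theta-\theta_\pi\rangle\ge \Gamma|\theta-\theta_\pi|^2$ established in the proof of Lemma \ref{lemma:theta_recursion_1}, together with the Lipschitz bound $|g|\le(1+\gamma)|\theta-\theta_\pi|$, I get $|\theta^{n+1}-\theta_{\pi^n}|_2\le(1-c h)|e_n|_2$ for some $c>0$ depending on $\Gamma$. Then $|e_{n+1}|\le(1-ch)|e_n|+|\theta_{\pi^{n+1}}-\theta_{\pi^n}|_2$. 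By Assumption \ref{as:e_value}, the last term is controlled by $\lambda_\beta^{-1/2}|Q^{\pi^{n+1}}_\tau-Q^{\pi^n}_\tau|_{B_b}$, and by Theorem \ref{thm:lipschitz_holder} this is at most $\alpha_1\sup_s\operatorname{KL}(\pi^{n+1}|\pi^n)^{1/2}+\alpha_2\sup_s\operatorname{KL}(\pi^{n+1}|\pi^n)$. From the explicit form of $\pi^{n+1}$ and the bounded approximate advantage, $\sup_s\operatorname{KL}(\pi^{n+1}|\pi^n)(s)\le C\lambda^2$. Therefore $|e_{n+1}|\le(1-ch)|e_n|+C\lambda$. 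Summing this recursion gives $\sum_{r<n}|e_{r+1}|\lesssim |e_0|/h+n\lambda/h$.

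\textbf{Step 3: combine.} Substituting into Step 1 and dividing by $\lambda(1-\gamma)n$ yields
\begin{equation}
\min_{r<n}\big(V^{\pi^r}_\tau(\rho)-V^*_\tau(\rho)\big)\lesssim \frac{1}{\lambda n}+\frac{1}{nh}+\frac{\lambda}{h}.
\end{equation}
Two things remain. First, $\frac{1}{\lambda n}$ must be recast in the stated form. For this I would use $\frac{1}{\lambda n}\le \frac{1}{\sqrt{nh}}$ whenever $\lambda\ge\sqrt{h/n}$, and otherwise handle it through $\lambda/h$ and AM–GM. Alternatively, I would avoid the $1/\lambda$ term altogether by a sharper treatment of the telescoped KL. Second, $\frac{1}{nh}\le\frac{1}{\sqrt{nh}}$ holds as soon as $nh\ge1$, and the case $nh<1$ is covered by boundedness of $V$.

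\textbf{Main obstacle.} I expect the hardest point to be this last reconciliation with $\frac{1}{\sqrt{nh}}$. It likely requires bounding the error sum by Cauchy–Schwarz, using $\sum|e_r|\le\sqrt{n\sum|e_r|^2}$ with the quadratic recursion $\sum|e_r|^2\lesssim 1/h+n\lambda^2/h^2$. That produces $\sqrt{1/(nh)}$ and $\lambda/h$ directly, with the $1/(\lambda n)$ term absorbed into the constant.
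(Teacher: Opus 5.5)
Your proof is correct, and it takes a genuinely different route from the paper's at both stages. It relies on realisability, which the paper's proof also uses tacitly, and on $C$ being allowed to depend on $\lambda$.

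\textbf{Actor step.} The paper avoids your $O(\lambda^2)$-per-step loss. Since $F(s)=\int_A A(s,a;\theta^{k+1})(\pi^{k+1}-\pi^k)(da|s)+\tau\operatorname{KL}(\pi^{k+1}|\pi^k)(s)\le 0$, Lemma \ref{lemma:L_smooth} and performance difference give approximate monotone improvement, $V^{\pi^{k+1}}_\tau\le V^{\pi^k}_\tau+F+\frac{2}{1-\gamma}|\theta^{k+1}-\theta_{\pi^k}|_2$. This telescopes into the bias-free Theorem \ref{thm:main_up_to_errors}, which the paper reuses for the $a/n$ rate of the double loop. Your Pinsker/AM--GM absorption is simpler and needs no monotonicity. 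It does leave an $O(\lambda)$ floor, which is harmless here only because it sits below $\lambda/h$.

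\textbf{Critic step.} The paper runs a squared recursion with Young's inequality and feeds in Lemma \ref{lemma:consecutive_KL_bounded}. That lemma only gives $\operatorname{KL}(\pi^{n+1}|\pi^n)=O(\lambda)$, hence $|\theta_{\pi^{n+1}}-\theta_{\pi^n}|_2^2=O(\lambda)$. After Cauchy--Schwarz this yields $\frac{1}{\sqrt{nh}}+\frac{\sqrt{\lambda}}{h}$, and the paper reaches $\lambda/h$ only by moving a factor $1/\lambda$ into $C$. Your argument does better. Because $A(\cdot;\theta^{n+1},\pi^n)$ is centred under $\pi^n$, Jensen gives $\log Z\ge 0$, and then $\operatorname{KL}(\pi^{n+1}|\pi^n)\le 2\lambda^2|A(\cdot;\theta^{n+1},\pi^n)|_{B_b(S\times A)}^2$. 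This makes the moving target genuinely $O(\lambda)$. Your unsquared recursion then gives $\frac{1}{nh}+\frac{\lambda}{h}$ with a $\lambda$-independent coefficient on $\lambda/h$, which is sharper than what the paper's argument delivers. A minor attribution point: the strong monotonicity $\langle g(\theta,\pi),\theta-\theta_\pi\rangle\ge\Gamma|\theta-\theta_\pi|_2^2$ comes from Lemma \ref{lemma:gradient} plus strong convexity of $L$, not from the proof of Lemma \ref{lemma:theta_recursion_1}.

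\textbf{Your ``main obstacle''.} It is not one. With your recursion no Cauchy--Schwarz is needed, since $\frac{1}{nh}\le\frac{1}{\sqrt{nh}}$ when $nh\ge 1$ and the case $nh<1$ is trivial.

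\textbf{The $\frac{1}{\lambda n}$ term.} This is the only point needing care, and two of your three suggestions fail there. When $\lambda<\sqrt{h/n}$, neither term on the right dominates $\frac{1}{\lambda n}$, so AM--GM cannot help. No sharper telescoping can remove the $1/\lambda$ either, because a $\lambda$-uniform $C$ is false. Take $\lambda=n^{-2}$: the iterates then move $O(1/n)$ in total variation. So for $\pi^0\ne\pi^*$ the left side stays near $V^{\pi^0}_\tau(\rho)-V^{\pi^*}_\tau(\rho)>0$, while the right side tends to $0$. The correct fix is the one you end with, which the paper uses implicitly: $\frac{1}{\lambda n}\le\frac{\sqrt{h}}{\lambda}\cdot\frac{1}{\sqrt{nh}}$ for $n\ge 1$, with $\sqrt{h}/\lambda$ absorbed into $C$.
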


\section{Double loop actor-critic}
\label{sec:double_loop}
With the results of Theorem \ref{thm:KL_BDD_small_gamma} in mind, we extend Algorithm \ref{algo:single_loop_ac} to incorporate $M(n) \geq 1$ temporal difference steps for each policy $\pi^n \in \mathcal{P}(A|S)$.
Let $\{\theta^{n,k}\}_{k=0}^{M(n)}$ denote the temporal difference steps for each $n \in \mathbb{N}$ and consider Algorithm \ref{algo:double_loop_ac}. 

\begin{algorithm}[H]
\caption{}
\label{algo:double_loop_ac}
\begin{algorithmic}[1]
\small
\REQUIRE Critic step size $h>0$, actor step size $\lambda>0$, number of critic steps $M\in\mathbb{N}$, 
initial parameters $\theta^{0}\in\mathbb{R}^{N}$, initial policy $\pi^{0}\in\Pi_\mu$
\FOR{$n = 0,1,2,\ldots$}
\STATE $\theta^{n,0} \gets \theta^{n}$
\FOR{$k = 1,\ldots,M(n)$}
\STATE $\theta^{n,k} \gets \theta^{n,k-1} - h\, g(\theta^{n,k-1}, \pi^{n})$
\ENDFOR
\STATE $\theta^{n+1} \gets \theta^{n,M}$
\STATE $\displaystyle \pi^{n+1} \gets 
\argmin_{\pi \in \Pi_\mu} \tilde{G}(\pi,\pi^{n},\theta^{n+1})$
\ENDFOR
\end{algorithmic}
\end{algorithm}

In this setting, we address the following fundamental question.
\begin{center}
\textit{
For each \(n \in \mathbb{N}\) and for all $\gamma \in (0,1)$, what is the minimal number of temporal-difference steps
\(M(n) \geq 1 \) required to ensure stability of Algorithm~\ref{algo:double_loop_ac} for entropy regularised MDPs in Polish action spaces?
}
\end{center}

\subsection{Stability}
To rigorously address the stability of Algorithm \ref{algo:double_loop_ac}, we firstly present some useful technical results on the semi-gradient.

\begin{lemma}\label{lemma: bound_g_squared}
Let Assumption \ref{as:e_value}, \ref{as:bounded_phi} and \ref{as:linearmdp} hold. Then for all $\theta \in \mathbb{R}^N$ and $\pi \in \Pi_{\mu}$ it holds that
\begin{equation} 
\left|g(\theta,\pi) \right|_2^2 \leq 2(1+\gamma)\left| \theta - \theta_{\pi}\right|_2^2
\end{equation}
\end{lemma}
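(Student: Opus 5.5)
The plan is to use realisability to rewrite the temporal-difference error as a linear operator applied to the parameter error, and then bound the resulting vector with Cauchy--Schwarz in $L^2(d_\beta^\pi)$.

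\emph{Step 1: linearise the TD error.} By Assumption \ref{as:linearmdp}, $Q^{\pi}_\tau=\langle\theta_\pi,\phi\rangle$ is the fixed point of $\mathrm{T}^\pi_\tau$. The operator $\mathrm{T}^\pi_\tau$ is affine with linear part $\gamma P^\pi$, where $P^\pi f(s,a)=\int f(s',a')P^\pi(ds',da'|s,a)$. Hence, setting $e(s,a):=\langle\theta-\theta_\pi,\phi(s,a)\rangle$,
\begin{equation}
Q(s,a;\theta)-\mathrm{T}^\pi_\tau Q(s,a;\theta)=e(s,a)-\gamma P^\pi e(s,a).
\end{equation}
The cost term and the KL term cancel in this difference. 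This gives $g(\theta,\pi)=\int (e-\gamma P^\pi e)\,\phi\,d^\pi_\beta$.

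\emph{Step 2: dualise and apply Cauchy--Schwarz.} I write $|g(\theta,\pi)|_2=\sup_{|v|_2\le1}\int (e-\gamma P^\pi e)\langle v,\phi\rangle\, d^\pi_\beta$. Cauchy--Schwarz in $L^2(d^\pi_\beta)$ bounds this by $\|e-\gamma P^\pi e\|_{L^2(d^\pi_\beta)}\,\|\langle v,\phi\rangle\|_{L^2(d^\pi_\beta)}$. The second factor is at most $1$ by Assumption \ref{as:bounded_phi}. Using $(x+y)^2\le 2x^2+2y^2$, it follows that
\begin{equation}
|g(\theta,\pi)|_2^2\le 2\|e\|^2_{L^2(d^\pi_\beta)}+2\gamma^2\|P^\pi e\|^2_{L^2(d^\pi_\beta)}.
\end{equation}

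\emph{Step 3 (the key step): a stationarity-type inequality for the occupancy measure.} This is where the specific form of $d^\pi_\beta$ from Appendix \ref{sec:technical} enters. I expect $d^\pi_\beta=(1-\gamma)\sum_{k\ge0}\gamma^k\beta (P^\pi)^k$, which satisfies $d^\pi_\beta=(1-\gamma)\beta+\gamma d^\pi_\beta P^\pi$. Consequently $\gamma\, d^\pi_\beta P^\pi\le d^\pi_\beta$ as measures. By Jensen, $(P^\pi e)^2\le P^\pi(e^2)$, and therefore
\begin{equation}
\gamma\|P^\pi e\|^2_{L^2(d^\pi_\beta)}\le\gamma\int P^\pi(e^2)\,d^\pi_\beta\le\|e\|^2_{L^2(d^\pi_\beta)}.
\end{equation}
Plugging this into Step 2 gives $|g|_2^2\le 2(1+\gamma)\|e\|^2_{L^2(d^\pi_\beta)}$. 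If the paper's normalisation of $d^\pi_\beta$ differs, this identity is the only place that must be adapted, so it is the main point to check.

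\emph{Step 4: conclude.} Finally, $|e(s,a)|\le|\theta-\theta_\pi|_2|\phi(s,a)|_2\le|\theta-\theta_\pi|_2$ pointwise, again by Assumption \ref{as:bounded_phi}. Since $d^\pi_\beta$ is a probability measure, $\|e\|^2_{L^2(d^\pi_\beta)}\le|\theta-\theta_\pi|_2^2$, which yields the claim. Assumption \ref{as:e_value} is not needed for this upper bound; it would only matter for a matching lower (coercivity) bound.
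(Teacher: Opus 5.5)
Your proposal is correct and follows essentially the same route as the paper. You subtract the fixed point $Q^\pi_\tau$ to reduce the TD error to $e-\gamma P^\pi e$, bound $|g|_2^2$ by $2\|e\|^2+2\gamma^2\|P^\pi e\|^2$ in $L^2(d^\pi_\beta)$, and close with Jensen and $\gamma\, d^\pi_\beta P^\pi\le d^\pi_\beta$; the paper cites this last inequality from prior work as $d^\pi_{J_\pi\beta}\le \frac{1}{\gamma}d^\pi_\beta$, whereas you derive it directly (and correctly) from $d^\pi_\beta=(1-\gamma)\beta+\gamma J_\pi d^\pi_\beta$, and you are right that Assumption~\ref{as:e_value} is not actually used.
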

See Appendix \ref{sec:proof_gradient_squared_bound} for a proof. For any fixed $n \in \mathbb{N}$, Theorem \ref{thm:inner_convergence} demonstrates linear convergence of the inner temporal difference loop for a sufficiently small critic step size under Assumptions \ref{as:e_value} and \ref{as:linearmdp}.
\begin{theorem}\label{thm:inner_convergence} Let Assumption \ref{as:e_value}, \ref{as:bounded_phi} and \ref{as:linearmdp} hold. Let $0 < h < \min\left\{\frac{\Gamma}{2(1+\gamma)},\frac{1}{\Gamma}\right\}$. Then for all $n \in \mathbb{N}$ it holds that
\begin{equation}
|\theta^{n+1} - \theta_{\pi^n}|_2^2 \leq e^{-M(n) h\Gamma}|\theta^{n} - \theta_{\pi^n}|_2^2.
\end{equation}
\end{theorem}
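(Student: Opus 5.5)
The plan is to prove a one-step contraction for the inner TD loop and then iterate it $M(n)$ times. Fix $n$ and write $\pi=\pi^n$, $\theta_\pi=\theta_{\pi^n}$, and $e_k=\theta^{n,k}-\theta_\pi$. By Assumption \ref{as:linearmdp}, $Q(\cdot;\theta_\pi)=Q^\pi_\tau$ is the fixed point of $\mathrm{T}^\pi_\tau$, so $g(\theta_\pi,\pi)=0$. Since $\mathrm{T}^\pi_\tau$ is affine in $f$, subtracting gives
\begin{equation}
g(\theta,\pi)=\int_{S\times A}\Big(\langle \theta-\theta_\pi,\phi(s,a)\rangle-\gamma\int\langle\theta-\theta_\pi,\phi(s',a')\rangle P^\pi(ds',da'|s,a)\Big)\phi(s,a)\,d^\pi_\beta(ds,da).
\end{equation}
Hence $g(\theta,\pi)=H_\pi(\theta-\theta_\pi)$ for a matrix $H_\pi$, and the recursion is $e_{k+1}=(I-hH_\pi)e_k$.

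Expanding the square gives
\begin{equation}
|e_{k+1}|_2^2=|e_k|_2^2-2h\langle e_k,g(\theta^{n,k},\pi)\rangle+h^2|g(\theta^{n,k},\pi)|_2^2 .
\end{equation}
The quadratic term is handled by Lemma \ref{lemma: bound_g_squared}, which gives $|g|_2^2\le 2(1+\gamma)|e_k|_2^2$. The key step is a strong monotonicity bound
\begin{equation}
\langle e,H_\pi e\rangle\ \ge\ \Gamma|e|_2^2 .
\end{equation}
Presumably this bound is already established in the proof of Lemma \ref{lemma:theta_recursion_1}, since $\Gamma$ appears there; if so I will simply invoke it.

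Otherwise I will prove it directly. Let $f=\langle e,\phi\rangle$ and let $\nu=d^\pi_\beta$. Then
\begin{equation}
\langle e,H_\pi e\rangle=\|f\|^2_{L^2(\nu)}-\gamma\langle f,P^\pi f\rangle_{L^2(\nu)} .
\end{equation}
Write $d^\pi_\beta=(1-\gamma)\sum_k\gamma^k\beta(P^\pi)^k$. From this, $\gamma\,\nu P^\pi\le\nu$, so by Jensen $\|P^\pi f\|_{L^2(\nu)}\le\gamma^{-1/2}\|f\|_{L^2(\nu)}$. Cauchy--Schwarz then gives
\begin{equation}
\langle e,H_\pi e\rangle\ge(1-\sqrt\gamma)\|f\|^2_{L^2(\nu)} .
\end{equation}
Also $\nu\ge(1-\gamma)\beta$, so Assumption \ref{as:e_value} yields $\|f\|^2_{L^2(\nu)}\ge(1-\gamma)\lambda_\beta|e|_2^2$. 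Multiplying out gives exactly $\Gamma=(1-\gamma)(1-\sqrt\gamma)\lambda_\beta$. This monotonicity step is the main obstacle, specifically getting the occupancy-measure domination right under the paper's convention for $d^\pi_\beta$ in Appendix \ref{sec:technical}.

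Combining the three pieces,
\begin{equation}
|e_{k+1}|_2^2\le\big(1-2h\Gamma+2h^2(1+\gamma)\big)|e_k|_2^2 .
\end{equation}
Since $h<\Gamma/(2(1+\gamma))$, we have $2h^2(1+\gamma)<h\Gamma$, so the factor is at most $1-h\Gamma$. The condition $h<1/\Gamma$ ensures $1-h\Gamma\in(0,1)$, and therefore $1-h\Gamma\le e^{-h\Gamma}$.

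Iterating over $k=0,\dots,M(n)-1$ gives $|e_{M(n)}|_2^2\le e^{-M(n)h\Gamma}|e_0|_2^2$. Since $e_0=\theta^n-\theta_{\pi^n}$ and $\theta^{n+1}=\theta^{n,M(n)}$, this is the claimed bound.
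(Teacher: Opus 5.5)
Your proof is correct and follows the paper's argument: expand the square, bound the $h^2$ term with Lemma~\ref{lemma: bound_g_squared}, bound the cross term below by $\Gamma|\theta^{n,k}-\theta_{\pi^n}|_2^2$, then iterate $1-h\Gamma\le e^{-h\Gamma}$ over the $M(n)$ inner steps. The one difference is where the cross-term bound comes from: the paper obtains it from Lemma~\ref{lemma:gradient} together with the $\lambda_\beta$-strong convexity of $L(\cdot,\pi;\beta)$, not from the proof of Lemma~\ref{lemma:theta_recursion_1}, and your self-contained derivation (via $\gamma J_\pi d^\pi_\beta\le d^\pi_\beta$, Jensen, Cauchy--Schwarz and $d^\pi_\beta\ge(1-\gamma)\beta$) is exactly what that lemma encodes.
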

See Appendix \ref{sec:proof_inner_conv} for a proof. 
Moreover, Lemma \ref{lemma:value_bound} then demonstrates that Algorithm \ref{algo:double_loop_ac} produces policies that improves the value function up to a critic approximation error.

\begin{lemma}\label{lemma:value_bound}
Let Assumption \ref{as:e_value}, \ref{as:bounded_phi} and \ref{as:linearmdp} hold. Let $0 < h < \min\left\{\frac{\Gamma}{2(1+\gamma)},\frac{1}{\Gamma}\right\}$. Then for all $n \in \mathbb{N}$ it holds that
\begin{equation}
V^{\pi^*}_{\tau}(s)\leq V^{\pi^{n+1}}_{\tau}(s) \leq V^{\pi^{n}}_{\tau}(s) + \frac{2e^{-\frac{M(n)h\Gamma}{2}}}{1-\gamma}|\theta^{n} - \theta_{\pi^n}|_{2}
\end{equation}

\end{lemma}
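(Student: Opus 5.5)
The lower bound $V^{\pi^*}_\tau(s)\le V^{\pi^{n+1}}_\tau(s)$ follows from optimality of $\pi^*$: by the Bellman principle (Theorem~\ref{thm:dynamics_programming}) the optimal policy minimises pointwise in $s$, not only in $\rho$-average. The work is therefore in the upper bound, which I would obtain from the performance difference lemma together with the explicit form of the mirror descent step.

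\textbf{Step 1 (closed form of the update).} I would first record the closed form of the update. Since $\tilde G$ is an integral over $s$ of a per-state objective, the minimiser is obtained state by state via the Gibbs variational principle:
\[
\frac{d\pi^{n+1}}{d\pi^n}(a|s)\propto \exp\bigl(-\lambda A(s,a;\theta^{n+1},\pi^n)\bigr).
\]
Write $A^{n}:=A(\cdot;\theta^{n+1},\pi^n)$ and, by Assumption~\ref{as:linearmdp}, $A^{\pi^n}_\tau=A(\cdot;\theta_{\pi^n},\pi^n)$. Both expressions are centred under $\pi^n(\cdot|s)$. Hence
\[
|A^{n}-A^{\pi^n}_\tau|_{B_b}\le 2\sup_{s,a}\bigl|\langle\theta^{n+1}-\theta_{\pi^n},\phi(s,a)\rangle\bigr|\le 2|\theta^{n+1}-\theta_{\pi^n}|_2,
\]
using Assumption~\ref{as:bounded_phi}. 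Theorem~\ref{thm:inner_convergence} then bounds the right-hand side by $2e^{-M(n)h\Gamma/2}|\theta^n-\theta_{\pi^n}|_2$.

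\textbf{Step 2 (performance difference).} Next I would apply the regularised performance difference lemma:
\[
V^{\pi^{n+1}}_\tau(s)-V^{\pi^n}_\tau(s)=\frac{1}{1-\gamma}\int_S\Big(\int_A A^{\pi^n}_\tau(s',a)\,\pi^{n+1}(da|s')+\tau\operatorname{KL}(\pi^{n+1}|\pi^n)(s')\Big)d^{\pi^{n+1}}_{s}(ds').
\]
This identity is standard for entropy-regularised MDPs (cf.\ \cite{david_fisher}) and can be proved by a telescoping argument with the Bellman equation for $V^{\pi^n}_\tau$. I would then split the inner integrand as $\int A^{n}\,d\pi^{n+1}+\tau\operatorname{KL}(\pi^{n+1}|\pi^n)+\int (A^{\pi^n}_\tau-A^{n})\,d\pi^{n+1}$.

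\textbf{Step 3 (sign of the main term).} Because $\pi^{n+1}(\cdot|s')$ minimises $\int A^{n}\,d\pi+\frac1\lambda\operatorname{KL}(\pi|\pi^n)$ and $\pi^n$ is admissible with value $0$ (as $A^n$ is centred under $\pi^n$), we get $\int A^{n}\,d\pi^{n+1}\le -\frac1\lambda\operatorname{KL}(\pi^{n+1}|\pi^n)$. The first two terms are therefore bounded by $(\tau-\frac1\lambda)\operatorname{KL}\le 0$, provided $\tau\lambda\le 1$. This is the main obstacle: the bound needs $\tau\lambda\le1$, which is not among the lemma's hypotheses. I would handle it in one of two ways. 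Either I use the sharper three-point identity $\int A^n\,d\pi^{n+1}+\frac1\lambda\operatorname{KL}(\pi^{n+1}|\pi^n)=-\frac1\lambda\log Z\le 0$ together with a direct handling of the $\tau\operatorname{KL}$ term. Or I adopt the standing condition $\tau\lambda<1$ imposed elsewhere in the paper, which is what I expect is implicitly intended.

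\textbf{Step 4 (error term).} The error term is at most $2e^{-M(n)h\Gamma/2}|\theta^n-\theta_{\pi^n}|_2$ pointwise in $s'$, by Step 1. Integrating against the probability measure $d_s^{\pi^{n+1}}$ and multiplying by $\frac1{1-\gamma}$ gives the stated bound.
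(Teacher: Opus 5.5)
Your proposal is correct and follows the paper's proof essentially step for step: performance difference, adding and subtracting the approximate advantage (at a cost of at most $\frac{2}{1-\gamma}|\theta^{n+1}-\theta_{\pi^n}|_2$), comparing the mirror-descent minimiser with $\pi^n$ to make the main term nonpositive, and then Theorem~\ref{thm:inner_convergence}. You are also right that $\tau\lambda\le 1$ is needed but missing from the statement: the paper silently uses $\tau\le\frac{1}{\lambda}$ in its first inequality, which is your second option, whereas your first option cannot remove the hypothesis in general (take one state, two actions, one-hot features, $c\equiv 0$, $\pi^0\neq\mu$ and $\theta^0=\theta_{\pi^0}$; the update is then exact and reduces to $\pi^{1}\propto\mu\,(\frac{d\pi^0}{d\mu})^{1-\tau\lambda}$, which for $\tau\lambda=3$ moves away from $\mu$ and so increases $V^{\pi}_\tau=\frac{\tau}{1-\gamma}\operatorname{KL}(\pi|\mu)$).
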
 
See Appendix \ref{sec:proof_value_bound} for a proof. Theorem \ref{thm:bounded_under_log_growth} then gives a sufficient growth condition on the number of inner
temporal-difference steps required to guarantee the stability of Algorithm \ref{algo:double_loop_ac} in Polish action spaces.

\begin{theorem}\label{thm:bounded_under_log_growth}
Let Assumption \ref{as:e_value}, \ref{as:bounded_phi}, and \ref{as:linearmdp} hold. Let $0 < h < \min\left\{\frac{\Gamma}{2(1+\gamma)},\frac{1}{\Gamma}\right\}$ and $0 <  \tau\lambda < 1$. Moreover, for each mirror descent step $n \in \mathbb{N}$ and for some $ c \geq 0$ let the number of inner temporal difference steps satisfy 
\begin{equation}
M(n) \geq \frac{4}{h\Gamma}\log\left(c(n+1) \right)
\end{equation}Then there exists $R \geq 0 $ such that for all $s \in S$ and $n \in \mathbb{N}$ it holds that
\begin{equation}
|\theta^{n}|_2 + \operatorname{KL}(\pi^{n}(\cdot|s) | \mu) \leq R.
\end{equation}
\end{theorem}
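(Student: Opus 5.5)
The plan is to bound $|\theta^n|_2$ first and then feed this into Lemma~\ref{lemma:log_recursion} to control the normalised log densities, and hence the KL divergence via Lemma~\ref{lemma:technical_KL_ln}.

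\textbf{Step 1: bound the realisability parameters.} By Lemma~\ref{lemma:value_bound}, the values $V^{\pi^n}_\tau$ are bounded below by $V^{\pi^*}_\tau$, which is bounded since $c$ is bounded and $\operatorname{KL}\ge 0$. For the upper bound, write $e_n := |\theta^n-\theta_{\pi^n}|_2$. Iterating Lemma~\ref{lemma:value_bound} gives
\[
\sup_s V^{\pi^{n}}_\tau(s) \le \sup_s V^{\pi^0}_\tau(s) + \frac{2}{1-\gamma}\sum_{k<n} e^{-M(k)h\Gamma/2}\, e_k .
\]
Since $Q^{\pi}_\tau = c + \gamma P V^\pi_\tau$, we get $|Q^{\pi^n}_\tau|_{B_b} \le |c| + \gamma \sup_s |V^{\pi^n}_\tau(s)|$. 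Under Assumption~\ref{as:e_value}, $\lambda_\beta |\theta_\pi|_2^2 \le \int Q^\pi_\tau(s,a)^2\,\beta(ds\,da)$, so $|\theta_{\pi^n}|_2 \le \lambda_\beta^{-1/2}|Q^{\pi^n}_\tau|_{B_b}$. Hence $|\theta_{\pi^n}|_2 \le C_0 + C_1\sum_{k<n} e^{-M(k)h\Gamma/2}e_k$.

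\textbf{Step 2: a closed recursion for $e_n$.} By Theorem~\ref{thm:inner_convergence},
\[
e_{n+1} \le |\theta^{n+1}-\theta_{\pi^n}|_2 + |\theta_{\pi^{n+1}}-\theta_{\pi^n}|_2 \le e^{-M(n)h\Gamma/2}e_n + |\theta_{\pi^{n+1}}|_2+|\theta_{\pi^n}|_2 .
\]
Setting $\epsilon_k := e^{-M(k)h\Gamma/2}$, the hypothesis $M(k)\ge \frac{4}{h\Gamma}\log(c(k+1))$ gives $\epsilon_k \le (c(k+1))^{-2}$, which is summable. (I would take $c\ge1$, or absorb the constant.) Let $E_n=\max_{k\le n}e_k$. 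Combining the two displays,
\[
E_{n+1}\le 2C_0 + \Big(\epsilon_n + 2C_1\sum_{k\le n}\epsilon_k\Big)E_n .
\]
This is where I expect the main obstacle: the coefficient must be at most one, or else the argument must be run in the multiplicative form $E_{n+1}\le (1+a_n)E_n + b$ with $\sum a_n<\infty$. A discrete Gronwall argument then gives only $E_n \le e^{\sum a_k}(E_0 + nb)$, which grows linearly in $n$.

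To avoid that linear growth, I would instead bound $|\theta^n|_2 \le e_n + |\theta_{\pi^n}|_2$ and use Step~1 directly. Since $e_k \le |\theta^k|_2 + C_0 + C_1 S_k$, where $S_k$ denotes the running sum from Step~1, we get
\[
S_{n+1}\le S_n + \epsilon_n\big(|\theta^n|_2 + C_0 + C_1S_n\big).
\]
Similarly $|\theta^{n+1}|_2 \le \epsilon_n e_n + |\theta_{\pi^n}|_2 \le \epsilon_n(|\theta^n|_2+2C_0+2C_1S_n) + C_0 + C_1 S_n$. So the pair $x_n = |\theta^n|_2 + S_n$ satisfies $x_{n+1}\le (1+C\epsilon_n)x_n + C\epsilon_n + C_0 + C_1S_n$. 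The additive term $C_0+C_1 S_n$ does not accumulate, because $|\theta^{n+1}|_2$ is not added to $|\theta^n|_2$. More carefully, $S_{n+1}\le (1+C\epsilon_n)S_n + C\epsilon_n(C_0+\text{bounded})$, and Gronwall with $\sum\epsilon_n<\infty$ yields $\sup_n S_n<\infty$. Consequently $\sup_n|\theta^n|_2<\infty$.

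\textbf{Step 3: bound the KL divergence.} With $\sup_n|\theta^{n}|_2\le R_\theta$ and $0<\tau\lambda<1$, Lemma~\ref{lemma:log_recursion} gives
\[
|l_n|_{B_b}\le (1-\tau\lambda)^n|l_0|_{B_b} + \frac{2R_\theta}{\tau},
\]
which is uniformly bounded because $\pi^0\in\Pi_\mu$. Since $\frac{d\pi^n}{d\mu}\propto e^{l_n}$, we have $\operatorname{KL}(\pi^n(\cdot|s)|\mu)\le 2|l_n|_{B_b}$ (this is Lemma~\ref{lemma:technical_KL_ln}). Combining the bounds on $|\theta^n|_2$ and $\operatorname{KL}$ gives $R$.
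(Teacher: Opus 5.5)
Your proof is correct in substance, and it takes a genuinely different route from the paper.

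\textbf{How the paper argues.} The paper runs its recursion on $\mathrm{L}_n=\sup_{r\le n}|l_r|_{B_b(S\times A)}$. It bounds $|Q(\cdot,\cdot;\theta^{n+1})|_{B_b(S\times A)}$ by $|Q^{\pi^n}_\tau|_{B_b(S\times A)}+\epsilon_n e_n$, and controls $|Q^{\pi^n}_\tau|_{B_b(S\times A)}$ through the iterated Lemma~\ref{lemma:value_bound}, exactly as in your Step~1. It then bounds $e_k\le|\theta^k|_2+|\theta_{\pi^k}|_2$ in terms of $\mathrm{L}_n$, using Lemma~\ref{lemma:theta_recursion_1} and Lemma~\ref{lemma:everyhing_bdd_if_KL_bdd}. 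This gives $|l_{n+1}|_{B_b(S\times A)}\le\beta_1(n)\mathrm{L}_n+\beta_2(n)$. Finally it takes $c$ large, namely $c^2\ge 8c(\gamma)\delta_2/\tau$, so that $\beta_1(n)\le 1-\tau\lambda/2$, and concludes with Lemma~\ref{lemma:sup_lemma}. That is precisely the ``coefficient at most one'' device you set aside; your first route would also close if you allowed $c$ to be large.

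\textbf{How you argue instead.} You close the loop entirely on the critic side, in terms of $|\theta^n|_2$ and $S_n=\sum_{k<n}\epsilon_k e_k$. You use only Lemma~\ref{lemma:value_bound}, Theorem~\ref{thm:inner_convergence} and $|\theta_\pi|_2\le\lambda_\beta^{-1/2}|Q^\pi_\tau|_{B_b(S\times A)}$. The KL divergence enters only at the end, through Lemma~\ref{lemma:log_recursion} and Lemma~\ref{lemma:technical_KL_ln}.

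\textbf{What each route buys.}
\begin{itemize}
\item Your route uses only summability of $\epsilon_k$, so every $c>0$ works rather than a problem-dependent large $c$. (For $c=0$ the hypothesis is vacuous, so some $c>0$ is needed in either argument.)
\item You never need Lemma~\ref{lemma:theta_recursion_1}. That lemma is proved for the iterates of Algorithm~\ref{algo:single_loop_ac} under $h\le\Gamma/(6(1+\gamma)^2)$, a narrower step-size range than the theorem assumes.
\item The price is a Gronwall constant of order $e^{C\sum_k\epsilon_k}$ instead of the paper's contraction constant, which is harmless here.
\end{itemize}

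\textbf{One step needs writing out, since as stated it does not close.} The recursion for $x_n=|\theta^n|_2+S_n$ carries the additive terms $C_0+C_1S_n$, which are not small and would make $x_n$ grow. Also, ``$C\epsilon_n(C_0+\text{bounded})$'' is circular if ``bounded'' refers to $|\theta^n|_2$. The fix is short. For $n\ge1$, Theorem~\ref{thm:inner_convergence} and Step~1 give
\[
|\theta^n|_2\le\epsilon_{n-1}e_{n-1}+|\theta_{\pi^{n-1}}|_2\le(S_n-S_{n-1})+C_0+C_1S_{n-1}\le C_0+(1+C_1)S_n .
\]
Hence $e_n\le 2C_0+(1+2C_1)S_n$, and
\[
S_{n+1}\le\bigl(1+(1+2C_1)\epsilon_n\bigr)S_n+2C_0\epsilon_n,\qquad n\ge1 .
\]
Therefore $\sup_nS_n\le\bigl(S_1+2C_0\sum_k\epsilon_k\bigr)\exp\bigl((1+2C_1)\sum_k\epsilon_k\bigr)<\infty$, which gives $\sup_n|\theta^n|_2<\infty$. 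Your Step~3 then goes through unchanged.
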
See Appendix \ref{sec:proof_bounded_under_log_growth} for a proof.

Theorem \ref{lemma:value_bound} shows that in order to achieve stability of Algorithm \ref{algo:double_loop_ac} for entropy regularised MDPs in general Polish action spaces, one has to increase the number of temporal difference steps logarithmically with the number of mirror descent updates.
This in turn implies that to perform $n$ policy updates one takes $\mathcal O(n \log n)$ TD learning steps. 
The convergence rate reported in Theorems~\ref{thm:linear_convergence_log_growth} and~\ref{thm:exponential_conv} has to be read in this context: it is stated in terms of the number of policy updates, not the number of TD learning steps.

\subsection{Convergence}\label{sec:convergence_double_loop}
Theorem \ref{thm:linear_convergence_log_growth} shows that if the number of inner temporal difference steps satisfies the conditions of Theorem \ref{thm:bounded_under_log_growth}, Algorithm \ref{algo:double_loop_ac} converges sub-linearly to the optimal value function for a sufficiently small critic step size.

\begin{theorem}
\label{thm:linear_convergence_log_growth}
Let Assumption \ref{as:linearmdp}, \ref{as:e_value} and \ref{as:bounded_phi} hold and let $0 < h < \frac{\Gamma}{2(1+\gamma)}$. Moreover for each $ n\in \mathbb{N}$, let the number of inner critic steps satisfy $M(n) \geq \frac{4}{h\Gamma}\log\left(c(n+1)\right)$ with $c > 0$ the constant from Theorem \ref{thm:bounded_under_log_growth}. Then there exists $a \geq 0$ such that for all $n \in \mathbb{N}$ it holds that
\begin{equation}
\min_{0 \leq r \leq n-1} V^{\pi^r}_{\tau}(\rho) - V^{\pi^*}_{\tau}(\rho) \leq \frac{a}{n}.
\end{equation}
\end{theorem}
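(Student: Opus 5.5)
We adapt the standard policy mirror descent analysis with exact advantages (as in \cite{lan2023policy,david_fisher}) and treat the critic error as an additive perturbation that is summable under the growth condition on $M(n)$.

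\emph{Step 1: the three-point inequality.} The update minimises $\tilde G(\cdot,\pi^n,\theta^{n+1})$, so for every $s$ its solution is the Gibbs policy
\[
\frac{d\pi^{n+1}}{d\pi^n}(a|s)\propto \exp\bigl(-\lambda A(s,a;\theta^{n+1},\pi^n)\bigr).
\]
The first-order optimality (three-point) lemma for KL-regularised linear minimisation then gives, for any $\pi\in\Pi_\mu$ (in particular $\pi=\pi^*$) and all $s$,
\[
\lambda\!\int_A A(s,a;\theta^{n+1},\pi^n)(\pi^{n+1}-\pi)(da|s) + \mathrm{KL}(\pi^{n+1}|\pi^n)(s)\le \mathrm{KL}(\pi|\pi^n)(s)-\mathrm{KL}(\pi|\pi^{n+1})(s).
\]
Next we replace $A(\cdot;\theta^{n+1},\pi^n)$ by $A^{\pi^n}_\tau$. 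By Assumption \ref{as:linearmdp}, $A^{\pi^n}_\tau=A(\cdot;\theta_{\pi^n},\pi^n)$, so by Assumption \ref{as:bounded_phi} the two advantages differ uniformly by at most $2|\theta^{n+1}-\theta_{\pi^n}|_2$. Theorem \ref{thm:inner_convergence} bounds this by $\varepsilon_n:=2e^{-M(n)h\Gamma/2}|\theta^n-\theta_{\pi^n}|_2$. Here Theorem \ref{thm:bounded_under_log_growth} gives $|\theta^n|_2\le R$, and $|\theta_{\pi^n}|_2\le \lambda_\beta^{-1/2}|Q^{\pi^n}_\tau|_{B_b}$ is bounded because the KL is bounded. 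Together with $M(n)\ge \frac{4}{h\Gamma}\log(c(n+1))$ this yields $\varepsilon_n\le C(n+1)^{-2}$, which is summable.

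\emph{Step 2: a performance-difference telescoping.} Integrate the inequality against $d^{\pi^*}_\rho$ and apply the regularised performance difference lemma
\[
V^{\pi^*}_\tau(\rho)-V^{\pi^n}_\tau(\rho)=\frac{1}{1-\gamma}\int\Bigl[\int A^{\pi^n}_\tau\,d\pi^* + \tau\,\mathrm{KL}(\pi^*|\pi^n)\Bigr]d^{\pi^*}_\rho,
\]
with the sign conventions of \cite{david_fisher}. The term $\int A^{\pi^n}_\tau\,d\pi^{n+1}$ is handled via Lemma \ref{lemma:value_bound}: by the same performance-difference argument at every state, it is bounded above by $(1-\gamma)\bigl(V^{\pi^{n+1}}_\tau-V^{\pi^n}_\tau\bigr)$ plus an $O(\varepsilon_n)$ error, after dropping the nonnegative $\mathrm{KL}(\pi^{n+1}|\pi^n)$ term. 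Writing $D_n:=\int \mathrm{KL}(\pi^*|\pi^n)\,d^{\pi^*}_\rho$, we aim for
\[
\lambda(1-\gamma)\bigl(V^{\pi^{n+1}}_\tau(\rho)-V^{\pi^*}_\tau(\rho)\bigr)\le D_n-D_{n+1}+C\lambda\varepsilon_n,
\]
where the $\tau$-term can simply be dropped because it has a favourable sign. Lemma \ref{lemma:value_bound} also gives near-monotonicity, $V^{\pi^{r+1}}_\tau\le V^{\pi^r}_\tau+\varepsilon_r/(1-\gamma)$.

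\emph{Step 3: summation.} Summing from $r=0$ to $n-1$ gives
\[
\sum_{r} \bigl(V^{\pi^{r+1}}_\tau-V^{\pi^*}_\tau\bigr)(\rho)\le \frac{D_0+C\lambda\sum_r\varepsilon_r}{\lambda(1-\gamma)}.
\]
The quantity $D_0$ is finite: $\pi^*,\pi^0\in\Pi_\mu$ have bounded log-densities relative to $\mu$ (Theorem \ref{thm:dynamics_programming}). Bounding the minimum by the average gives the bound $a/n$.

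The main obstacle is the bookkeeping in Step 2. The ratio between $d^{\pi^*}_\rho$ and the $d^{\pi^n}_\rho$-weighting used in $\tilde G$ must not introduce a concentrability constant. Because the minimiser is pointwise in $s$, the three-point inequality holds for each state and can be integrated against any measure, which avoids this. The errors must also be controlled uniformly in $s$, and this is where the uniform bound from Theorem \ref{thm:bounded_under_log_growth} is essential.
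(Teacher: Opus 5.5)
The overall plan is the same as the paper's. The paper packages your Steps 1--2 as Theorem \ref{thm:main_up_to_errors}, then sums the critic errors $\varepsilon_n\le C(n+1)^{-2}$ from Theorems \ref{thm:inner_convergence} and \ref{thm:bounded_under_log_growth} exactly as you do. The gap is the per-step inequality you aim for in Step 2,
\[
\lambda(1-\gamma)\bigl(V^{\pi^{n+1}}_\tau-V^{\pi^*}_\tau\bigr)(\rho)\le D_n-D_{n+1}+C\lambda\varepsilon_n ,
\]
which is false in general.

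To reach it you need, up to error terms, a \emph{lower} bound on $\int_S G_n\,d^{\pi^*}_\rho$ by $\lambda(1-\gamma)(V^{\pi^{n+1}}_\tau-V^{\pi^n}_\tau)(\rho)$. Here $G_n(s):=\lambda\int_A A(s,a;\theta^{n+1},\pi^n)\pi^{n+1}(da|s)+\operatorname{KL}(\pi^{n+1}|\pi^n)(s)\le 0$. The bound you state goes in the unhelpful direction. The quantity $(1-\gamma)$ times the value change at $s$ is the $d^{\pi^{n+1}}_s$-average of the exact residual, not the residual at $s$. Combined with Lemma \ref{lemma:L_smooth}, the performance difference lemma gives only $\lambda(V^{\pi^{n+1}}_\tau-V^{\pi^n}_\tau)(s)\le G_n(s)+O(\lambda\varepsilon_n)$ pointwise. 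Integrating the three-point inequality against $d^{\pi^*}_\rho$ therefore yields only
\[
\lambda(1-\gamma)\bigl(V^{\pi^{n}}_\tau-V^{\pi^*}_\tau\bigr)(\rho)+\lambda\int_S\bigl(V^{\pi^{n+1}}_\tau-V^{\pi^n}_\tau\bigr)\,d^{\pi^*}_\rho\le D_n-D_{n+1}+C\lambda\varepsilon_n .
\]
Converting the middle term into $\lambda(1-\gamma)(V^{\pi^{n+1}}_\tau-V^{\pi^n}_\tau)(\rho)$ goes the wrong way. Since $V^{\pi^{n+1}}_\tau-V^{\pi^n}_\tau\lesssim 0$ and $d^{\pi^*}_\rho\ge(1-\gamma)\rho$, the $d^{\pi^*}_\rho$-integral is the \emph{smaller} of the two. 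This is the concentrability issue you avoided in the three-point step, reappearing here.

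A concrete failure with an exact critic ($\theta^0=\theta_{\pi^0}$):
\begin{itemize}
\item Take $\gamma=\tfrac12$, $\tau=0.01$, $\lambda=5$, $\mu$ uniform on two actions, and $\rho=\delta_{s_0}$.
\item From $s_0$ (cost $0$), one action leads to an absorbing state of cost $\tfrac12$. The other leads to an absorbing state whose two actions cost $0$ and $1$.
\item Let $\pi^0$ put mass $0.9$ on the wrong action at both decision states.
\end{itemize}
Then $\lambda(1-\gamma)(V^{\pi^1}_\tau-V^{\pi^*}_\tau)(\rho)\approx1.21$, while $D_0-D_1\approx0.22$.

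The repair is the paper's: keep the second display and sum it over $n$. The middle terms telescope to $\lambda\int_S(V^{\pi^n}_\tau-V^{\pi^0}_\tau)\,d^{\pi^*}_\rho\ge-\lambda\int_S(V^{\pi^0}_\tau-V^{\pi^*}_\tau)\,d^{\pi^*}_\rho$, because $V^{\pi^n}_\tau\ge V^{\pi^*}_\tau$ pointwise. This finite constant appears as $\lambda(V^0(\rho)-V^*(\rho))$ in Theorem \ref{thm:main_up_to_errors}, and it must appear in your Step 3 bound. Without it, that bound already fails at $n=1$: use the same example with $\pi^0(\cdot|s_0)=\pi^*(\cdot|s_0)$ and $\lambda\tau=0.9$. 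There $(V^{\pi^1}_\tau-V^{\pi^*}_\tau)(\rho)\approx0.49$, whereas $D_0/(\lambda(1-\gamma))\approx0.026$.

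After this change the left-hand side becomes $\lambda(1-\gamma)\sum_{r=0}^{n-1}(V^{\pi^r}_\tau-V^{\pi^*}_\tau)(\rho)$. This matches $\min_{0\le r\le n-1}$ directly, with no index shift and no need for near-monotonicity. The rest of your argument then goes through as in the paper: the uniform bound on $|\theta^n-\theta_{\pi^n}|_2$, summability of $\varepsilon_n$, and finiteness of $D_0$.
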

See Appendix \ref{sec:proof_linear_conv_log_growth} for a proof. Finally, Theorem \ref{thm:exponential_conv} demonstrates that under a concentrability assumption, if we increase the number of temporal difference steps linearly with $n \in \mathbb{N}$, we obtain linear convergence of Algorithm \ref{algo:double_loop_ac}.

\begin{theorem}\label{thm:exponential_conv}
Let Assumption \ref{as:linearmdp}, \ref{as:e_value} and \ref{as:bounded_phi} hold and fix $\rho \in \mathcal{P}(S)$.
Moreover suppose that $\Big|\frac{\mathrm{d}d_{\rho}^{\pi^*}}{\mathrm{d}\rho} \Big|_{B_b(S)} \leq \xi < \infty$.
Let $0 < h < \frac{\Gamma}{2(1+\gamma)}$ and $0 \leq \tau \lambda \leq \frac{\xi - 1}{\xi}$
and for each policy update $n \in \mathbb{N}$ let the number of inner temporal difference steps satisfy $M(n)\geq \frac{4c}{h\Gamma}(n+1)$ with $c \geq 0$ the constant from Theorem \ref{thm:bounded_under_log_growth}.
Then there exists $b \geq 0$ such that for all $ n \in \mathbb{N}$ it holds that
\begin{align}
V^{\pi^n}_{\tau}(\rho) - V^{\pi^*}_{\tau}(\rho)
&\le be^{-\min\left\{ \frac{1}{\xi},\, c\right\}n}.
\end{align}
See Appendix \ref{sec:proof_exp_conv} for a proof.
\end{theorem}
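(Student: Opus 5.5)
The plan is to run the standard linear-convergence argument for exact policy mirror descent with entropy regularisation (as in \cite{lan2023policy, david_fisher}) and to carry the critic error along as an additive perturbation. That perturbation will be summable and exponentially small because $M(n)$ grows linearly.

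\textbf{Step 1: three-point inequality.} For the inexact update $\pi^{n+1}=\argmin_\pi \tilde G(\pi,\pi^n,\theta^{n+1})$, the first-order optimality gives $\frac{d\pi^{n+1}}{d\pi^n}\propto \exp(-\lambda A(\cdot;\theta^{n+1},\pi^n))$. The KL-prox three-point lemma then gives, for every $s$ and for $\pi=\pi^*$,
\begin{equation}
\lambda\!\int_A A(s,a;\theta^{n+1},\pi^n)(\pi^{n+1}-\pi^*)(da|s) + \operatorname{KL}(\pi^{n+1}|\pi^n)(s) + (1+\tau\lambda)\operatorname{KL}(\pi^*|\pi^{n+1})(s) \le \operatorname{KL}(\pi^*|\pi^n)(s).
\end{equation}
Here the factor $(1+\tau\lambda)$ comes from the $\tau\ln\frac{d\pi}{d\mu}$ part of the approximate advantage, which is treated as the strongly convex part.

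\textbf{Step 2: replace the approximate advantage by the exact one.} Under Assumption~\ref{as:linearmdp}, the approximate advantage satisfies $A(\cdot;\theta_{\pi^n},\pi^n)=A^{\pi^n}_\tau$. By Assumption~\ref{as:bounded_phi}, the difference $|A(\cdot;\theta^{n+1},\pi^n)-A^{\pi^n}_\tau|$ is at most $2|\theta^{n+1}-\theta_{\pi^n}|_2$. Theorem~\ref{thm:inner_convergence} bounds this by $2e^{-M(n)h\Gamma/2}|\theta^n-\theta_{\pi^n}|_2$. Theorem~\ref{thm:bounded_under_log_growth} applies, since linear growth of $M(n)$ dominates the required logarithmic growth. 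It gives $|\theta^n|_2\le R$, and $|\theta_{\pi^n}|_2$ is bounded because $Q^{\pi^n}_\tau$ is bounded uniformly (KL is bounded) and Assumption~\ref{as:e_value} holds. So the error is $\varepsilon_n\le C e^{-2c(n+1)}$ when $M(n)\ge \frac{4c}{h\Gamma}(n+1)$.

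\textbf{Step 3: performance difference and contraction.} Integrate Step 1 against $d^{\pi^*}_\rho$. Apply the performance difference lemma to the term $\int A^{\pi^n}_\tau\,d\pi^*$; it gives $(1-\gamma)(V^{\pi^*}_\tau-V^{\pi^n}_\tau)(\rho)$, modulo the paper's normalisation of $d_\rho$. For the term $\int A^{\pi^n}_\tau\,d\pi^{n+1}$, use Lemma~\ref{lemma:value_bound}-type monotonicity: pointwise it is $\ge$ something that, via the performance difference lemma with $d^{\pi^{n+1}}_s\ge(1-\gamma)\delta_s$, lower bounds $V^{\pi^{n+1}}_\tau-V^{\pi^n}_\tau$ up to $O(\varepsilon_n)$. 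Then use the concentrability bound $d^{\pi^*}_\rho\le\xi\rho$ to convert integration against $d^{\pi^*}_\rho$ into integration against $\rho$. This step is only legitimate because $V^{\pi^{n+1}}_\tau-V^{\pi^n}_\tau\le O(\varepsilon_n)$ pointwise, so the positive part is only an error term. With $\Delta_n:=V^{\pi^n}_\tau(\rho)-V^{\pi^*}_\tau(\rho)$ and $D_n:=\int\operatorname{KL}(\pi^*|\pi^n)\,d d^{\pi^*}_\rho$, the target is a recursion of the form
\begin{equation}
\Delta_{n+1}+\tfrac{1+\tau\lambda}{\lambda\xi}D_{n+1}\le \Big(1-\tfrac1\xi\Big)\Delta_n+\tfrac{1}{\lambda\xi}D_n + C\varepsilon_n .
\end{equation}
The condition $\tau\lambda\le\frac{\xi-1}{\xi}$ should ensure the combined Lyapunov quantity contracts at rate at least $1-\frac1\xi\le e^{-1/\xi}$: the KL weight contracts by $\frac{1}{1+\tau\lambda}$, and the hypothesis lets both terms share the common factor.

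\textbf{Step 4: unroll.} Unrolling gives $\Delta_n\lesssim e^{-n/\xi}(\Delta_0+D_0)+\sum_{k<n}e^{-(n-1-k)/\xi}e^{-2ck}$. The sum is bounded by $C'e^{-\min\{1/\xi,c\}n}$, up to a polynomial factor $n$ in the equal-rate case, which is absorbed by the slack between $2c$ and $c$.

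\textbf{Main obstacle.} I expect Step 3 to be the hardest: obtaining the clean one-step recursion with the right constants. In particular, handling the sign of the $\pi^{n+1}$ term when switching measures from $d^{\pi^*}_\rho$ to $\rho$ needs the approximate monotonicity of Lemma~\ref{lemma:value_bound}, and matching the Lyapunov weights needs the stated constraint on $\tau\lambda$. I would first follow the exact-gradient proof in \cite{lan2023policy} line by line and insert $\varepsilon_n$ wherever $A^{\pi^n}_\tau$ appears.
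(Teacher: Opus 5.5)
Two steps of your plan fail as written.

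\emph{Step 1.} The three-point inequality you state is false for this update. The entropy part of $A(\cdot;\theta^{n+1},\pi^n)$ is $\tau\ln\frac{d\pi^n}{d\mu}$, frozen at $\pi^n$. So $m\mapsto\int_A A(s,a;\theta^{n+1},\pi^n)\,m(da)$ is linear, and there is no strongly convex part to produce the factor $1+\tau\lambda$. Since $\pi^{n+1}(\cdot|s)\propto e^{-\lambda A(s,\cdot;\theta^{n+1},\pi^n)}\pi^n(\cdot|s)$, a direct computation gives the exact identity
\[
\lambda\int_A A(s,a;\theta^{n+1},\pi^n)(\pi^{n+1}-\pi^*)(da|s)+\operatorname{KL}(\pi^{n+1}|\pi^n)(s)+\operatorname{KL}(\pi^*|\pi^{n+1})(s)=\operatorname{KL}(\pi^*|\pi^n)(s).
\]
Your version would therefore force $\tau\lambda\operatorname{KL}(\pi^*|\pi^{n+1})(s)\le 0$.

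The contraction of the KL term comes instead from the term you discard in Step 3. The performance difference lemma gives
\[
\int_S\int_A A^{\pi^n}_\tau(s,a)(\pi^*-\pi^n)(da|s)\,d^{\pi^*}_\rho(ds)=(1-\gamma)(V^{\pi^*}_\tau-V^{\pi^n}_\tau)(\rho)-\tau D_n ,
\]
and this $-\tau D_n$ is what produces the factor $1-\tau\lambda$ on $D_n$, exactly as in the paper. Equivalently, the update is the unlinearised prox step for $\int_A Q(s,a;\theta^{n+1})m(da)+\tau\operatorname{KL}(m|\mu)$ with step $\eta=\lambda/(1-\tau\lambda)$, and $1+\tau\eta=(1-\tau\lambda)^{-1}$.

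Make this correction, and apply $d^{\pi^*}_\rho\le\xi\rho$ to the nonpositive function $V^{\pi^{n+1}}_\tau-V^{\pi^n}_\tau-\frac{2}{1-\gamma}|\theta^{n+1}-\theta_{\pi^n}|_2$ as you propose. Your route then yields
\[
\Delta_{n+1}+\tfrac{1}{\lambda\xi}D_{n+1}\le\Bigl(1-\tfrac{1-\gamma}{\xi}\Bigr)\Delta_n+\tfrac{1-\tau\lambda}{\lambda\xi}D_n+C\varepsilon_n .
\]
Note the coefficient is $1-\frac{1-\gamma}{\xi}$, not $1-\frac1\xi$. The paper reaches $1-\frac1\xi$ only by redefining $\xi$ as $\frac{1}{1-\gamma}\bigl|\frac{\mathrm{d}d^{\pi^*}_\rho}{\mathrm{d}\rho}\bigr|_{B_b(S)}$. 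That quantity bounds $\bigl|\frac{\mathrm{d}d^{\pi^*}_\rho}{\mathrm{d}d^{\pi^{n+1}}_\rho}\bigr|_{B_b(S)}$ because $d^{\pi}_\rho\ge(1-\gamma)\rho$. The paper changes measure to $d^{\pi^{n+1}}_\rho$ rather than to $\rho$; otherwise the two routes are equivalent.

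\emph{Step 3, matching the weights.} The hypothesis $\tau\lambda\le\frac{\xi-1}{\xi}$ cannot make the two terms share a common factor. The Lyapunov quantity contracts by $\max\{1-\frac{1-\gamma}{\xi},\,1-\tau\lambda\}$. This equals $1-\frac{1-\gamma}{\xi}$ only if $\tau\lambda\ge\frac{1-\gamma}{\xi}$, which is a \emph{lower} bound. The same happens in your own recursion: $\frac{1}{1+\tau\lambda}\le\frac{\xi-1}{\xi}$ holds iff $\tau\lambda\ge\frac{1}{\xi-1}$.

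The paper's proof invokes exactly such a lower bound at this point (``$\frac1\lambda<\tau\xi$'', with its rescaled $\xi$), even though the statement only imposes an upper bound. This is not an artefact of the Lyapunov argument. Take a one-state MDP, so that $d^{\pi^*}_\rho=\rho$. Already with exact advantages, $\ln\frac{d\pi^{n+1}}{d\pi^*}=(1-\tau\lambda)\ln\frac{d\pi^{n}}{d\pi^*}$ up to an additive constant. Hence $V^{\pi^n}_\tau(\rho)-V^{\pi^*}_\tau(\rho)=\frac{\tau}{1-\gamma}\operatorname{KL}(\pi^n|\pi^*)$ decays only like $(1-\tau\lambda)^{2n}$, which is arbitrarily slow as $\tau\lambda\to0$ with $\xi$ fixed.

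So you must either add $\tau\lambda\ge\frac{1-\gamma}{\xi}$, or conclude only the rate $\min\{\frac{1-\gamma}{\xi},\tau\lambda,c\}$.

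Your Steps 2 and 4 are sound. Linear $M(n)$ dominates $\frac{4}{h\Gamma}\log(c(n+1))$ because $x\ge\log x$. Your treatment of the equal-rate case in the unrolled sum is more careful than the paper's geometric-series formula.
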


\section{Conclusion and future directions}
In this work, we study stability and convergence of a fundamental actor-critic algorithm for entropy regularized MDPs on general state-action spaces under $Q^\pi$-realisability with linear function approximation.
In the algorithm the policy is updated through mirror descent and the critic parameters via TD learning. 
We provide sufficient, but restrictive, conditions guaranteeing stability and linear convergence for the single-step variant.
For the multi-step variant we remove the restrictive assumption and by appropriately controlling the number of TD updates per policy update, we establish sublinear and, under an additional concentrability assumption, linear convergence rates.
To the best of our knowledge, these results address a significant theoretical gap for actor–critic methods in entropy-regularised MDPs.
To focus on the algorithm’s behaviour in general spaces, we analyse the population setting in which all integrals are evaluated exactly.
Extending the analysis to the sample-based setting remains an important direction for future work.

\section*{Acknowledgements}
DZ was supported by the EPSRC Centre for Doctoral Training in Mathematical Modelling, Analysis and Computation (MAC-MIGS) funded by the UK Engineering and Physical Sciences Research Council (grant EP/S023291/1), Heriot-Watt University and the University of Edinburgh. We acknowledge funding from the UKRI Prosperity Partnerships grant APP43592: AI2 – Assurance and Insurance for Artificial Intelligence, which supported this work. The authors would like to thank the Isaac Newton Institute for Mathematical Sciences, Cambridge, for support and hospitality during the programme Bridging Stochastic Control And Reinforcement Learning, where work on this paper was undertaken. This work was supported by EPSRC grant EP/V521929/1.

\bibliographystyle{abbrv}
\bibliography{arxivbib}

\appendix

\section{Notation}
Let $(E, d)$ denote a Polish space (i.e., a complete separable metric space). We always equip a Polish space with its Borel sigma-field $\mathcal{B}(E)$. Denote by $B_b(E)$ the space of bounded measurable functions $f : E \to \mathbb{R}$ endowed with the supremum norm $|f|_{B_b(E)} = \sup_{x \in E} |f(x)|$. Denote by $\mathcal{M}(E)$ the Banach space of finite signed measures $\mu$ on $E$ endowed with the total variation norm $|\mu|_{\mathcal{M}(E)} = |\mu|(E)$, where $|\mu|$ is the total variation measure. Recall that if $\mu = f\, d\rho$, where $\rho \in \mathcal{M}_+(E)$ is a nonnegative measure and $f \in L^1(E, \rho)$, then $|\mu|_{\mathcal{M}(E)} = |f|_{L^1(E, \rho)}$. Denote by $\mathcal{P}(E) \subset \mathcal{M}(E)$ the set of probability measures on $E$. Moreover, we denote the Euclidean norm on $\mathbb{R}^{N}$ by $|\cdot|$ with inner product $\langle \cdot, \cdot \rangle$. Given some $A, B \in \mathbb{R}^{N \times N}$, we denote by $\lambda_{\min}(A)$ the minimum eigenvalue of $A$ and denote $A \succeq B$ if and only if $A - B$ is positive semidefinite.
\section{Technical Details}
\label{sec:technical}

The state-occupancy kernel $d^{\pi} \in \mathcal{P}(S|S)$ is defined by 
\begin{equation}
\label{eq:occupancy_s}
d^{\pi}(ds'|s)=(1-\gamma)\sum_{n=0}^{\infty}\gamma^nP^n_{\pi}(ds'|s)\,,
\end{equation}
where $P^n_{\pi}$ is the $n$-times product of the kernel $P_{\pi}$ with $P^0_{\pi}(ds'|s)\coloneqq \delta_s(ds')$. Moreover, for each $\pi \in \mathcal{P}(A|S)$ and $(s,a) \in S \times A$, we define the state-action occupancy kernel as
\begin{equation}
d^{\pi}(ds,da|s,a) = (1-\gamma) \sum_{n=0}^{\infty} \gamma^n (P^{\pi})^n(ds,da|s,a)
\end{equation} where $(P^{\pi})^{n}$ is the $n$-times product of the kernel $P^{\pi}$ with $(P^{\pi})^{0}(ds',da'|s,a) := \delta_{(s,a)}(ds',da')$. Given some initial state-action distribution $\beta \in \mathcal{P}(S\times A)$ with initial state distribution $\rho(ds) = \int_{A} \beta(da,ds)$, we define the state-occupancy and state-action occupancy measures as
\begin{equation}\label{eq:occupancy}
d^{\pi}_{\rho}(ds) = \int_{S} d^{\pi}(ds|s')\rho(ds'), \quad d^{\pi}_{\beta}(ds,da) = \int_{S \times A}d^{\pi}(ds,da|s',a') \beta(da',ds').
\end{equation}
Note that for all $E \in \mathcal{B}(S \times A)$, by defining the linear operator $J_{\pi} : \mathcal{P}(S \times A) \to  \mathcal{P}(S \times A)$ as
\begin{equation}\label{eq:transition_operator}
J_{\pi}\beta(E) = \int_{S\times A} P^{\pi}(E|s',a')\beta(ds',da'),
\end{equation}it directly holds that
\begin{equation}
d^{\pi}_{\beta}(da,ds) = (1-\gamma)\sum_{n=0}^{\infty} \gamma^n J_{\pi}^n\beta(da,ds),
\end{equation}
with $J_{\pi}^{n}$ the $n$-fold product of the operator $J_{\pi}$ with $J_{\pi}^{0}=I$, the identity operator on $\mathcal{P}(S \times A)$. 

A proof for the following dynamic programming principle for entropy regularised MDPs in general spaces can be found in \cite{david_fisher}.

\begin{theorem}[Dynamic Programming Principle]
\label{thm:dynamics_programming}
Let $\tau > 0$. The optimal value function $V^{*}_{\tau}$ is the unique bounded solution of the following Bellman equation:
\[
V^{\ast}_{\tau}(s)=-\tau\ln\int_{A}\exp\left(-
\frac{1}{\tau}Q^{\ast}_{\tau}(s,a)\right)\mu(da),
\]
where $Q^*_{\tau}\in B_b(S\times A)$ is defined by  
\[
Q^{*}_{\tau}(s,a)=c(s,a)+\gamma\int_S V_{\tau}^{*}(s')P(ds'|s,a)\,,
\quad \forall (s,a)\in S\times A\,.
\]Moreover, there is an optimal policy $\pi^* \in \mathcal{P}(A|S)$  given by
\[
\label{eq:optimal_policy}
\pi^*(da|s) = \exp\left(-\frac{1}{\tau }(Q^{\ast}_{\tau}(s,a)-V^{\ast}_{\tau}(s))\right)\mu(da)\,,
\quad \forall s\in S.
\]Finally, the value function $V^{\pi}_{\tau}$ is the unique bounded solution of the following Bellman equation for all $s \in S$
\[
V^{\pi}_{\tau}(s)=\int_{A}\left(Q_\tau^\pi(s,a)+\tau \ln \frac{d \pi}{d \mu}(a,s)\right)\pi(da|s)\,.
\]
\end{theorem}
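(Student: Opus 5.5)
The plan is to prove the policy-evaluation Bellman equation first, then the optimal equation via a contraction argument, and finally identify the Gibbs policy as optimal by a one-step improvement (Donsker–Varadhan) comparison.

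\textbf{Policy evaluation.} Fix $\pi$ with $\sup_s \operatorname{KL}(\pi(\cdot|s)|\mu)<\infty$; this covers $\pi\in\Pi_\mu$, since $\ln\frac{d\pi}{d\mu}$ is then bounded. Condition on the first step in the series defining $V^\pi_\tau$ and use the Markov property. This gives
\[
V^\pi_\tau(s)=\int_A\Big(c(s,a)+\gamma\int_S V^\pi_\tau(s')P(ds'|s,a)\Big)\pi(da|s)+\tau\operatorname{KL}(\pi(\cdot|s)|\mu).
\]
Writing $\operatorname{KL}=\int_A\ln\frac{d\pi}{d\mu}\,d\pi$ and using the definition \eqref{eq:Q_func} of $Q^\pi_\tau$ yields the stated policy Bellman equation. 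The map $V\mapsto \int_A(c+\gamma PV)\,d\pi+\tau\operatorname{KL}$ is a $\gamma$-contraction on $B_b(S)$, so the bounded solution is unique.

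\textbf{Optimal equation.} Define $\mathcal T V(s):=-\tau\ln\int_A\exp(-\tfrac1\tau(c(s,a)+\gamma PV(s,a)))\mu(da)$ on $B_b(S)$. The key tool is the Gibbs variational formula: for bounded $f$,
\[
-\tau\ln\int_A e^{-f/\tau}\,d\mu=\inf_{m\in\mathcal P(A)}\Big(\int_A f\,dm+\tau\operatorname{KL}(m|\mu)\Big),
\]
with the infimum attained uniquely at $m\propto e^{-f/\tau}\mu$. Consequently $\mathcal T$ is monotone and commutes with constants, i.e. $\mathcal T(V+k)=\mathcal TV+\gamma k$. Together these give $|\mathcal TV-\mathcal TW|_{B_b(S)}\le\gamma|V-W|_{B_b(S)}$, so Banach's fixed point theorem provides a unique bounded fixed point $\bar V$. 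Set $\bar Q=c+\gamma P\bar V$ and $\bar\pi\propto e^{-\bar Q/\tau}\mu$. Since $\bar Q$ is bounded, $\bar\pi\in\Pi_\mu$ and $\ln\frac{d\bar\pi}{d\mu}=-(\bar Q-\bar V)/\tau$, which is exactly the stated form of $\pi^*$.

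\textbf{Identifying $\bar V=V^*_\tau$.} I first show $V^{\bar\pi}_\tau=\bar V$. By the variational formula with equality at $\bar\pi$, $\bar V$ satisfies the policy Bellman equation for $\bar\pi$, and uniqueness from the first step gives the identity.

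Next I show $V^\pi_\tau\ge\bar V$ for every $\pi\in\mathcal P(A|S)$. If $V^\pi_\tau(s)=+\infty$ there is nothing to prove. Otherwise I use the variational inequality pointwise:
\[
\int_A \bar Q\,d\pi(\cdot|s)+\tau\operatorname{KL}(\pi(\cdot|s)|\mu)\ge\bar V(s).
\]
Iterating this along the trajectory gives $\bar V(s)\le \mathbb E^\pi_s\big[\sum_{n<N}\gamma^n(c+\tau\operatorname{KL})\big]+\gamma^N\mathbb E^\pi_s[\bar V(s_N)]$. Letting $N\to\infty$ with $\bar V$ bounded yields $\bar V\le V^\pi_\tau$. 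Integrating against $\rho$ then shows that $\bar\pi$ attains $V^*_\tau(\rho)$ and that $V^*_\tau=\bar V$ pointwise, hence $Q^*_\tau=\bar Q$.

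\textbf{Main obstacle.} The delicate step is the comparison for general $\pi\in\mathcal P(A|S)$, whose KL term may be infinite or non-integrable. Because the running cost $c+\tau\operatorname{KL}$ is bounded below, it is legitimate to interchange expectation and the infinite sum by monotone convergence, with the $\pm\infty$ conventions handled explicitly. Measurability of $s\mapsto\operatorname{KL}(\pi(\cdot|s)|\mu)$ follows from the variational representation of KL as a supremum over a countable family of bounded continuous functions on the Polish space $A$.
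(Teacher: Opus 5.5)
Your proof is correct. The paper gives no argument of its own for this theorem; it only refers to \cite{david_fisher}. What you have written is therefore a self-contained proof of a result the paper imports, and it follows the standard route. The Gibbs variational formula makes the soft Bellman operator monotone with $\mathcal{T}(V+k)=\mathcal{T}V+\gamma k$, hence a $\gamma$-contraction on $B_b(S)$. The Gibbs policy built from its fixed point $\bar V$ lies in $\Pi_\mu$ and has value $\bar V$ by uniqueness for the policy equation. Optimality then follows from a verification argument that iterates the one-step inequality along trajectories.

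That last step is where your argument earns its keep. The natural in-paper shortcut would be Lemma~\ref{lem:performance_diff} with $\pi'=\pi^*$. Since $Q^{\pi^*}_\tau+\tau\ln\frac{d\pi^*}{d\mu}=V^*_\tau(s)$ does not depend on $a$, the first term vanishes, leaving $V^{\pi}_\tau(\rho)-V^{\pi^*}_\tau(\rho)=\frac{\tau}{1-\gamma}\int_S\operatorname{KL}(\pi|\pi^*)(s)\,d^{\pi}_\rho(ds)\geq 0$. However, that lemma is stated only for $\pi,\pi'\in\Pi_\mu$, whereas $V^*_\tau$ is an infimum over all of $\mathcal{P}(A|S)$. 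That class includes policies whose entropy term is infinite or non-integrable. Your trajectory argument covers exactly this class, with monotone convergence justified because the running cost $c+\tau\operatorname{KL}$ is bounded below. So it establishes optimality in the generality in which the statement is posed.

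Restricting the policy-evaluation part to policies with $\sup_s\operatorname{KL}(\pi(\cdot|s)|\mu)<\infty$ is also the right reading. It matches the paper's earlier formulation for $\pi\in\Pi_\mu$, and for general $\pi$ the function $V^\pi_\tau$ need not be bounded.
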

The performance difference lemma, first introduced for tabular unregularised MDPs, has become fundamental in the analysis of MDPs as it acts a substitute for the strong convexity of the $\pi \mapsto V^{\pi}_{\tau}$ if the state-occupancy measure $d_{\rho}^{\pi}$ is ignored (e.g \cite{kakade_PD}, \cite{jordan}, \cite{lan}). By virtue of \cite{david_fisher}, we have the following performance difference for entropy regularised MDPs in Polish state and action spaces.
\begin{lemma}[Performance difference]
\label{lem:performance_diff}
For all $\rho \in \mathcal{P}(S)$ and $\pi,\pi'\in \Pi_{\mu}$, 
\begin{align*}
&V^{\pi}_\tau(\rho)-V^{\pi'}_\tau(\rho) \\
&\quad = \frac{1}{1-\gamma}\int_S \bigg[\int_A\left(Q^{\pi'}_{\tau}(s,a)+\tau \ln \frac{d \pi'}{d\mu}(a,s)\right)(\pi-\pi')(da|s) + \tau    \operatorname{KL}(\pi(\cdot | s)|\pi'(\cdot | s)) \bigg]d^{\pi}_\rho(ds)\,.
\end{align*}
\end{lemma}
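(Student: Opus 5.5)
The approach is to derive a one-step recursion for the pointwise value difference $\Delta(s) := V^{\pi}_\tau(s) - V^{\pi'}_\tau(s)$ from the on-policy Bellman equations of Theorem~\ref{thm:dynamics_programming}, and then unroll it along the state kernel $P_\pi$. This produces the occupancy kernel $d^\pi$ of \eqref{eq:occupancy_s}.

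\emph{Step 1: boundedness.} Since $\pi,\pi'\in\Pi_\mu$, both $\ln\frac{d\pi}{d\mu}$ and $\ln\frac{d\pi'}{d\mu}$ are bounded measurable functions. Hence $\operatorname{KL}(\pi(\cdot|s)|\mu)$, $\operatorname{KL}(\pi(\cdot|s)|\pi'(\cdot|s))$, $V^\pi_\tau$, $V^{\pi'}_\tau$, $Q^\pi_\tau$ and $Q^{\pi'}_\tau$ are all in $B_b$. This justifies every splitting of integrals below and every use of Fubini.

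\emph{Step 2: one-step identity.} Using $V^{\pi}_\tau(s)=\int_A (Q^\pi_\tau+\tau\ln\frac{d\pi}{d\mu})\,\pi(da|s)$ and the analogous identity for $\pi'$, add and subtract $\int_A (Q^{\pi'}_\tau+\tau\ln\frac{d\pi'}{d\mu})\,\pi(da|s)$. This gives
\[
\Delta(s) = F(s) + \int_A \big(Q^\pi_\tau-Q^{\pi'}_\tau\big)(s,a)\,\pi(da|s),
\]
with
\[
F(s):=\int_A\Big(Q^{\pi'}_\tau+\tau\ln\tfrac{d\pi'}{d\mu}\Big)(s,a)\,(\pi-\pi')(da|s)+\tau\operatorname{KL}(\pi(\cdot|s)|\pi'(\cdot|s)).
\]
Here the KL term arises as $\tau\int_A\big(\ln\frac{d\pi}{d\mu}-\ln\frac{d\pi'}{d\mu}\big)\,d\pi$. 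By \eqref{eq:Q_func}, $Q^\pi_\tau-Q^{\pi'}_\tau=\gamma\int_S\Delta(s')P(ds'|s,a)$. Integrating against $\pi(da|s)$ then yields $\Delta = F+\gamma P_\pi\Delta$.

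\emph{Step 3: unrolling.} Iterating $N$ times gives
\[
\Delta=\sum_{n=0}^{N-1}\gamma^nP_\pi^nF+\gamma^NP_\pi^N\Delta.
\]
The remainder is bounded by $\gamma^N|\Delta|_{B_b(S)}\to0$. The series converges uniformly since $F\in B_b(S)$, so
\[
\Delta(s)=\sum_{n\ge0}\gamma^n P^n_\pi F(s)=\frac{1}{1-\gamma}\int_S F(s')\,d^\pi(ds'|s).
\]
Integrating against $\rho$ and applying Fubini with \eqref{eq:occupancy} gives the claimed formula with $d^\pi_\rho$.

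The only delicate point is Step 1, namely ensuring that all quantities are finite and bounded so that the algebra in Step 2 is legitimate. In general Polish action spaces the KL terms could otherwise be $+\infty$; restricting to $\Pi_\mu$ removes this issue. Everything else is routine.
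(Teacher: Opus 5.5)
Your proof is correct. The one-step identity $\Delta = F+\gamma P_\pi\Delta$, obtained from the on-policy Bellman equations and then unrolled into the occupancy kernel $d^\pi$ and integrated against $\rho$, is the standard argument, and your Step 1 correctly explains why restricting to $\Pi_\mu$ makes every manipulation legitimate. The paper gives no proof of this lemma; it imports the result from \cite{david_fisher}, and your Bellman-recursion argument is, to my knowledge, essentially how it is established there.
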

The following three lemmas become useful in the stability analysis of both Algorithm \ref{algo:single_loop_ac} and Algorithm \ref{algo:double_loop_ac} aswell as the convergence analysis of Algorithm \ref{algo:double_loop_ac}.
\begin{lemma}\label{lemma:technical_KL_ln}
For all $\pi \in \Pi_{\mu}$ and $s \in S $ it holds that
\begin{equation}
\operatorname{KL}(\pi(\cdot|s) | \mu)  \leq 2\left|\log\frac{d\pi}{d\mu}(s,\cdot)- \int_{A}\log\frac{d\pi}{d\mu}(s,a) \mu(da) \right|_{B_b(A)}. 
\end{equation}
\end{lemma}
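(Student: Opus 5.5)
Write $f(a) := \log\frac{d\pi}{d\mu}(s,a)$ for fixed $s$ and set $\bar f := \int_A f\,d\mu$, $g := f - \bar f$, $L := |g|_{B_b(A)}$. Since $\pi \in \Pi_\mu$, $f$ is bounded and so $L<\infty$. The plan is to bound the KL divergence directly by $L$.

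First, express the divergence in terms of $g$:
\begin{equation}
\operatorname{KL}(\pi(\cdot|s)|\mu) = \int_A f\, d\pi = \bar f + \int_A g\, d\pi \leq \bar f + L .
\end{equation}
It then suffices to show $\bar f \leq L$. Normalisation gives $\int_A e^{f}\,d\mu = 1$, hence $e^{\bar f}\int_A e^{g}\,d\mu = 1$, that is,
\begin{equation}
\bar f = -\log \int_A e^{g}\, d\mu .
\end{equation}
Because $g \geq -L$ pointwise, $\int_A e^{g}\,d\mu \geq e^{-L}$, and therefore $\bar f \leq L$. Combining the two displays gives $\operatorname{KL}(\pi(\cdot|s)|\mu) \leq 2L$, which is the claim.

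As an alternative to the pointwise bound, Jensen's inequality yields $\int_A e^g\,d\mu \geq e^{\int_A g\,d\mu} = 1$, so $\bar f \leq 0$. This gives the sharper bound $\operatorname{KL} \leq L$, which of course implies the stated one. There is no real obstacle here. The only points to check are that $f$ is $\mu$-integrable and bounded, which follows from Definition~\ref{def:admissible_policies}, and that $\pi(\cdot|s) \ll \mu$, so that $\operatorname{KL}(\pi(\cdot|s)|\mu) = \int_A f\,d\pi$.
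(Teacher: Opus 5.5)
Your proof is correct, and it is organised differently from the paper's.

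The paper's route. The paper adds the reverse divergence, writing $\operatorname{KL}(\pi(\cdot|s)|\mu)\le \operatorname{KL}(\pi(\cdot|s)|\mu)+\operatorname{KL}(\mu|\pi(\cdot|s)) = \int_A g\,d(\pi(\cdot|s)-\mu)$, where $g$ is your centred log-density. It then uses sup-norm/total-variation duality with $|\pi(\cdot|s)-\mu|_{\mathcal{M}(A)}\le 2$. The centring constant plays no role there, since $\pi(\cdot|s)-\mu$ annihilates constants.

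Your route. You instead split $\operatorname{KL}(\pi(\cdot|s)|\mu)=\bar f+\int_A g\,d\pi$ and control the $\mu$-mean $\bar f$ through the log-partition identity $\bar f=-\log\int_A e^{g}\,d\mu$. Since $\bar f=-\operatorname{KL}(\mu|\pi(\cdot|s))$, your Jensen remark $\bar f\le 0$ is exactly the paper's first step. The two arguments diverge only afterwards.

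Where the factor $2$ goes. Because $\int_A g\,d\mu=0$, the paper's quantity $\int_A g\,d(\pi-\mu)$ equals $\int_A g\,d\pi\le L$. So the total-variation step is precisely where a factor $2$ is lost. Your decomposition exposes this and gives the sharper one-sided bound $\operatorname{KL}(\pi(\cdot|s)|\mu)\le \sup_a g(a)\le L$. Your primary route, via the pointwise bound $g\ge -L$, spends the factor $2$ elsewhere (bounding $\bar f\le L$ rather than $\le 0$) and recovers only the stated constant.

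Why the sharper constant matters here. In the proof of Theorem~\ref{thm:KL_BDD_small_gamma} the constant enters squared, via $\mathrm{M}_n^2\le 4\mathrm{L}_n^2$. Following that proof line by line with constant $1$, $\kappa(h)$ shrinks by a factor $4$. The restrictive requirement $32\gamma^2/\Gamma^2<1$ would then relax to $8\gamma^2/\Gamma^2<1$.
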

\begin{proof}By the definition and non-negativity of KL divergence it holds that
\begin{align}
\operatorname{KL}(\pi(\cdot|s) | \mu) &\leq \operatorname{KL}(\pi(\cdot|s) | \mu)  + \operatorname{KL}(\mu| \pi(\cdot|s)) \\
&= \int_{A} \log\frac{d\pi}{d\mu}(s,a)\pi(da|s) + \int_{A} \log\frac{d\mu}{d\pi}(s,a) \mu(da) \\
&= \int_{A} \log\frac{d\pi}{d\mu}(s,a)(\pi(da|s) - \mu(da)) \\
&= \int_{A} \left(\log\frac{d\pi}{d\mu}(s,a)- \int_{A}\log\frac{d\pi}{d\mu}(s,a) \mu(da)\right)(\pi(da|s) - \mu(da)) \\
&\leq \left|\log\frac{d\pi}{d\mu}(s,\cdot)- \int_{A}\log\frac{d\pi}{d\mu}(s,a) \mu(da) \right|_{B_b(A)}| \pi(\cdot|s) - \mu|_{\mathcal{M}(A)}\\
&\leq 2\left|\log\frac{d\pi}{d\mu}(s,\cdot)- \int_{A}\log\frac{d\pi}{d\mu}(s,a) \mu(da) \right|_{B_b(A)},
\end{align}where we used that $| \pi(\cdot|s) - \mu|_{\mathcal{M}(A)} \leq 2$ for all $s \in S$ in the final inequality.
\end{proof}
\begin{lemma}\label{lemma:sup_lemma}
For all $n\geq 0$, let $\{a_n \}_{n \geq 0}$ be a sequence of non-negative numbers such that
\begin{equation}
a_{n+1} \leq \kappa A_{n} + c
\end{equation} with $c \geq 0$, $0<\kappa < 1$ and $\mathrm{A}_n = \sup_{0 \leq r \leq n} a_r$. Then for all $n \in \mathbb{N}$ it holds that
\begin{equation}
\mathrm{A}_{n} \leq a_0 + \frac{c}{1-\kappa}.
\end{equation}
\end{lemma}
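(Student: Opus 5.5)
We argue by strong induction on $n$, aiming for the bound $a_r \le a_0 + \frac{c}{1-\kappa}$ for every $0 \le r \le n$. Taking the supremum over $r$ then gives $\mathrm{A}_n \le a_0 + \frac{c}{1-\kappa}$. Throughout we write $B := a_0 + \frac{c}{1-\kappa}$.

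\emph{Base case.} Since $c \ge 0$ and $0<\kappa<1$, we have $\frac{c}{1-\kappa} \ge 0$. Hence $a_0 \le B$, and so $\mathrm{A}_0 = a_0 \le B$.

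\emph{Inductive step.} Suppose $\mathrm{A}_n \le B$. The hypothesis of the lemma gives
\begin{equation}
a_{n+1} \le \kappa \mathrm{A}_n + c \le \kappa a_0 + \frac{\kappa c}{1-\kappa} + c = \kappa a_0 + \frac{c}{1-\kappa}.
\end{equation}
Because $a_0 \ge 0$ and $\kappa < 1$, we have $\kappa a_0 \le a_0$, so $a_{n+1} \le B$. Combining this with the inductive hypothesis, $\mathrm{A}_{n+1} = \max\{\mathrm{A}_n, a_{n+1}\} \le B$, which closes the induction.

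The only points needing care are the two sign conditions. Non-negativity of $a_0$ is what allows $\kappa a_0 \le a_0$. The condition $c \ge 0$ handles the base case. Apart from these, the argument is the algebraic identity $\kappa\frac{c}{1-\kappa} + c = \frac{c}{1-\kappa}$, so no step presents a real obstacle.
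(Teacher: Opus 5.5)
Your proof is correct and follows essentially the paper's route: both write $\mathrm{A}_{n+1}=\max\{\mathrm{A}_n,a_{n+1}\}\le\max\{\mathrm{A}_n,\kappa\mathrm{A}_n+c\}$ and bound each branch by $B=a_0+\frac{c}{1-\kappa}$. Your explicit induction, using $\kappa B+c\le B$ (where $a_0\ge 0$ enters), is the tidier version, since the paper's ``iterating'' of $\mathrm{A}_{n+1}\le\kappa\mathrm{A}_n+c$ in the second branch implicitly relies on this same invariant when the two branches alternate.
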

\begin{proof}
By definition, for any $n \in \mathbb{N}$ it holds that
\begin{equation}
A_{n+1}=\max\{A_n,a_{n+1}\}\leq \max\{A_n,\kappa A_n + c\}.
\end{equation}
If $\max\{A_{n},a_{n+1}\}=A_n$, then $A_{n+1}\leq A_0 + \frac{c}{1-\kappa} = a_0 + \frac{c}{1-\kappa}$ holds trivially for all $n \in \mathbb{N}$. On the other hand, we have
\begin{equation}
A_{n+1}\leq a_{n+1}\leq \kappa A_n + c.
\end{equation}
Iterating this inequality also yields $A_{n+1} \leq A_0 + \frac{c}{1-\kappa} = a_0 + \frac{c}{1-\kappa}$.
\end{proof}

\begin{lemma}\label{lemma:gradient}\cite{zorba2025convergenceactorcriticentropyregularised}
Let Assumption \ref{as:linearmdp} hold. Then for all $\theta \in \mathbb{R}^N$ and $\pi \in \Pi_{\mu}$ it holds that 
\begin{equation}
-\inner{g(\theta,\pi)}{\theta-\theta_{\pi}} \leq -(1-\sqrt{\gamma})(1-\gamma)\inner{\nabla_{\theta}{L}(\theta,\pi;\beta)}{\theta-\theta_{\pi}}
\end{equation} with \[\nabla_{\theta} L(\theta,\pi;\beta) = \int_{S \times A} (\inner{\theta}{\phi(s,a)} - Q^{\pi}_{\tau}(s,a))\phi(s,a)\beta(da,ds).\]
\end{lemma}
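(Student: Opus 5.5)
The plan is to reduce everything to the error function $e(s,a) := Q(s,a;\theta) - Q^{\pi}_{\tau}(s,a) = \inner{\theta-\theta_\pi}{\phi(s,a)}$, using Assumption \ref{as:linearmdp}. It is bounded since $\phi$ is bounded, so every integral below is finite.

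\emph{Step 1: rewrite the Bellman residual.} By \eqref{eq:bellman_operator_Def}, $\mathrm{T}^{\pi}_{\tau}$ is affine: the cost term and the KL term do not depend on $f$. Hence for any $f_1,f_2$ we have $\mathrm{T}^{\pi}_\tau f_1 - \mathrm{T}^\pi_\tau f_2 = \gamma P^\pi(f_1-f_2)$, where $P^\pi f(s,a) := \int f(s',a')P^\pi(ds',da'|s,a)$. Since $Q^\pi_\tau$ is the fixed point of $\mathrm{T}^\pi_\tau$,
\[
Q(\cdot;\theta) - \mathrm{T}^\pi_\tau Q(\cdot;\theta) = e - \gamma P^\pi e .
\]
Pairing with $\theta-\theta_\pi$ inside \eqref{eq:semi_gradient_def}, and writing $d:=d^\pi_\beta$ and $\|\cdot\|_d$ for the $L^2(d)$ norm, gives
\[
\inner{g(\theta,\pi)}{\theta-\theta_\pi} = \|e\|_d^2 - \gamma\int e\, P^\pi e \, dd .
\]

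\emph{Step 2: bound the cross term.} This is the key step. By Cauchy--Schwarz the cross term is at most $\|P^\pi e\|_d \|e\|_d$. Jensen applied to the kernel $P^\pi$ gives $(P^\pi e)^2 \le P^\pi(e^2)$, so
\[
\|P^\pi e\|_d^2 \le \int e^2 \, d(J_\pi d),
\]
with $J_\pi$ the operator from \eqref{eq:transition_operator}. From the series representation of $d^\pi_\beta$ one has the identity
\[
J_\pi d^\pi_\beta = \frac{1}{\gamma}\bigl(d^\pi_\beta - (1-\gamma)\beta\bigr) \le \frac{1}{\gamma} d^\pi_\beta ,
\]
which yields $\|P^\pi e\|_d \le \gamma^{-1/2}\|e\|_d$. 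Therefore
\[
\inner{g(\theta,\pi)}{\theta-\theta_\pi} \ge (1-\sqrt{\gamma})\|e\|_d^2 .
\]

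\emph{Step 3: pass from $d^\pi_\beta$ to $\beta$.} The $n=0$ term of the series gives $d^\pi_\beta \ge (1-\gamma)\beta$. Hence
\[
\|e\|_d^2 \ge (1-\gamma)\int e^2\, d\beta = (1-\gamma)\inner{\nabla_\theta L(\theta,\pi;\beta)}{\theta-\theta_\pi},
\]
where the last equality holds because $\nabla_\theta L = \int e\,\phi \, d\beta$. Combining with Step 2 and multiplying by $-1$ gives the claim.

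The only delicate point is the measure inequality $J_\pi d^\pi_\beta \le \gamma^{-1}d^\pi_\beta$ in Step 2. It needs the series identity to be handled carefully as an identity of measures, with monotone convergence justifying the termwise manipulation. The remaining steps are routine.
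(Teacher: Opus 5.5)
Your proof is correct and is essentially the standard argument behind this result, which the paper only cites. You rewrite the residual as $e-\gamma P^\pi e$, bound the cross term via Cauchy--Schwarz, Jensen and the sub-invariance $J_\pi d^\pi_\beta\le\gamma^{-1}d^\pi_\beta$ (the same inequality $d^\pi_{J_\pi\beta}\le\gamma^{-1}d^\pi_\beta$ the paper imports from the cited work in the proof of Lemma \ref{lemma: bound_g_squared}), and finish with $d^\pi_\beta\ge(1-\gamma)\beta$; the boundedness of $\phi$ you use comes from Assumption \ref{as:bounded_phi}, which is implicitly needed anyway for $\mathrm{T}^\pi_\tau Q(\cdot;\theta)$ and $g$ to be well defined.
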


Lemma \ref{lemma:everyhing_bdd_if_KL_bdd} demonstrates that if there exists $R > 0$ such that $\operatorname{KL}(\pi^n(\cdot|s)|\mu) \leq R$ for all $s \in S$ and $n \in \mathbb{N}$, then the true state-action value function and critic parameters are also uniformly bounded. 

\begin{lemma}\label{lemma:everyhing_bdd_if_KL_bdd}
Suppose that $\operatorname{KL}(\pi^n(\cdot|s)|\mu) \leq R$ for all $n \in \mathbb{N}$ and $s \in S$.  Then it holds that
\begin{align}
&\left|Q^{\pi^n}\right|_{B_b(S\times A)} \leq \frac{1}{1-\gamma}\left(|c|_{B_b(S\times A)} + \tau \gamma R\right), \\
&    |\theta_{\pi^n}|_2 \leq \frac{1}{(1-\gamma)\lambda_{\beta}}\left(|c|_{B_b(S\times A)} + \tau \gamma R\right),
\end{align} for all $n \in \mathbb{N}$. Moreover suppose that the critic step size satisfies $0<h \leq \frac{\Gamma}{6(1+\gamma)^2}$, then there exists $\tilde{R} \geq 0$ independent of $h , \lambda > 0$ such that for all $n \geq 0$ it holds that 
\begin{align}
|\theta^{n+1}|_2^2 \leq \tilde{R}.
\end{align} 
\end{lemma}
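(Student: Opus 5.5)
The statement is Lemma \ref{lemma:everyhing_bdd_if_KL_bdd}, which has three bounds; I treat them in order.

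\emph{Step 1: bound on $Q^{\pi^n}$.} Since $Q^{\pi^n}_\tau$ is the fixed point of $\mathrm{T}^{\pi^n}_\tau$, taking the supremum in \eqref{eq:bellman_operator_Def} gives
\[
|Q^{\pi^n}|_{B_b} \le |c|_{B_b} + \gamma |Q^{\pi^n}|_{B_b} + \tau\gamma \sup_{s'} \operatorname{KL}(\pi^n(\cdot|s')|\mu)
\le |c|_{B_b} + \gamma |Q^{\pi^n}|_{B_b} + \tau\gamma R .
\]
Rearranging yields the first bound. Equivalently, one can bound the geometric series $\sum_k \gamma^k$ applied to $c + \tau\gamma\,\mathrm{KL}$.

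\emph{Step 2: bound on $\theta_{\pi^n}$.} Assumption \ref{as:linearmdp} gives $\inner{\theta_{\pi^n}}{\phi} = Q^{\pi^n}_\tau$. Integrate $\phi\,Q^{\pi^n}_\tau$ against $\beta$ and set $\Sigma_\beta := \int \phi\phi^\top d\beta$. Then $\Sigma_\beta \theta_{\pi^n} = \int \phi\, Q^{\pi^n}_\tau\, d\beta$. By Assumption \ref{as:e_value},
\[
\lambda_\beta |\theta_{\pi^n}|_2 \le |\Sigma_\beta\theta_{\pi^n}|_2 \le \int |\phi|_2\,|Q^{\pi^n}_\tau|\, d\beta \le |Q^{\pi^n}_\tau|_{B_b},
\]
where the last inequality uses Assumption \ref{as:bounded_phi}. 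Combining with Step 1 gives the second bound.

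\emph{Step 3: bound on $\theta^{n+1}$.} For the single-loop iterates I would invoke Lemma \ref{lemma:theta_recursion_1} directly. Its hypothesis $h \le \Gamma/(6(1+\gamma)^2)$ is exactly the assumed one, and it gives
\[
|\theta^{n+1}|_2^2 \le |\theta_0|_2^2 + \frac{(3h+2/\Gamma)\left(\tau^2\gamma^2 R^2 + |c|^2_{B_b}\right)}{\Gamma - 3h(1+\gamma)^2}.
\]
The remaining point is to remove the dependence on $h$. Because $h \le \Gamma/(6(1+\gamma)^2)$, the denominator satisfies $\Gamma - 3h(1+\gamma)^2 \ge \Gamma/2$. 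Also $3h \le \Gamma/(2(1+\gamma)^2) \le \Gamma/2$. Hence
\[
\tilde R := |\theta_0|_2^2 + \frac{2}{\Gamma}\left(\frac{\Gamma}{2}+\frac{2}{\Gamma}\right)\left(\tau^2\gamma^2R^2 + |c|^2_{B_b}\right)
\]
works, and it is independent of $h$ and $\lambda$.

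\emph{Double-loop case.} If the statement is meant for the iterates of Algorithm \ref{algo:double_loop_ac}, I would instead use Theorem \ref{thm:inner_convergence}, which gives $|\theta^{n+1}-\theta_{\pi^n}|_2 \le |\theta^n-\theta_{\pi^n}|_2$. Then $|\theta^{n+1}|_2 \le |\theta^n|_2 + 2\sup_m|\theta_{\pi^m}|_2$, which is not uniform in $n$. To fix this I would use the contraction factor $e^{-h\Gamma/2}<1$ with Step 2:
\[
|\theta^{n+1}|_2 \le e^{-h\Gamma/2}|\theta^n|_2 + 2B, \qquad B := \sup_m |\theta_{\pi^m}|_2 .
\]
Iterating gives a bound, but it depends on $h$.

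\emph{Main obstacle.} This $h$-dependence in the double-loop case is the only delicate point. It is why I would rely on the single-loop route through Lemma \ref{lemma:theta_recursion_1}, whose hypothesis on $h$ matches the statement exactly.
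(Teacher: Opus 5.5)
Your proposal is correct and follows the paper's proof. Steps 1 and 2 are exactly the paper's fixed-point and $\Sigma_\beta$ arguments, and Step 3 likewise invokes Lemma~\ref{lemma:theta_recursion_1}; your explicit bounds $\Gamma-3h(1+\gamma)^2\ge\Gamma/2$ and $3h\le\Gamma/2$ are a cleaner replacement for the paper's monotonicity-of-$\alpha(h)$ argument. Your double-loop worry is also unnecessary: the one-step estimate behind Lemma~\ref{lemma:theta_recursion_1} holds for every TD step under any policy with KL at most $R$, so iterating it over all inner steps of Algorithm~\ref{algo:double_loop_ac} gives the same $h$-independent $\tilde R$.
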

\begin{proof}
Recall that for any $\pi \in \Pi_{\mu}$, $Q^{\pi}_{\tau}\in B_b(S\times A)$ is a fixed point of the Bellman operator defined in \eqref{eq:bellman_operator_Def}. Therefore it holds that
\begin{equation}
Q^{\pi}_{\tau}(s,a) = c(s,a) + \gamma \int_{S\times A}Q^{\pi}_{\tau}(s',a')P^{\pi}(ds',da'|s,a) + \tau\gamma \int_{S}\operatorname{KL}(\pi(\cdot|s')|\mu)P(ds'|s,a).
\end{equation}Taking the $|\cdot|_{B_b(S\times A)}$ on both sides and gives the result. Now let $\Sigma_{\beta} = \int_{S \times A} \phi(s,a) \phi(s,a)^{\top} \beta(da,ds)$. By Assumption \ref{as:linearmdp}, \ref{as:bounded_phi} and \ref{as:e_value} it holds that
\begin{align}
|\theta_{\pi}|_2 &= \left| \Sigma_{\beta} \Sigma_{\beta}^{-1} \theta_{\pi}\right|_2 \\
&\leq \frac{1}{\lambda_{\beta}}\left| Q^{\pi}_{\tau}\right|_{B_b(S\times A)}.
\end{align}Then using the first result yields the desired bound. 

Furthermore, suppose that $0<h<\frac{\Gamma}{3(1+\gamma)^2}$ Theorem \ref{lemma:theta_recursion_1}, it holds that
\begin{align}
|\theta^{n+1}|_2^2 &\leq |\theta_0|_2^2+ \frac{2\tau^2\gamma^2\left(3h + \frac{2}{\Gamma}\right)}{\Gamma - 3h(1+\gamma)^2}R^2 + \frac{|c|_{B_b(S\times A)}^2\left(3h + \frac{2}{\Gamma}\right)}{\Gamma - 3h(1+\gamma)^2} \\
&= \label{eq:with_alpha}|\theta_0|_2^2 + 2\tau^2\gamma^2R^2 \alpha(h) + |c|_{B_b(S\times A)}^2\alpha(h)
\end{align} where $\alpha(h) = \frac{3h+\frac{2}{\Gamma}}{\Gamma-3h(1+\gamma)^2}$ and observe that $\alpha(h)$ is non-decreasing and non-positive for all $h \in \left(0,\frac{\Gamma}{3(1+\gamma)^2}\right)$. Hence we further restrict the critic step size to
\begin{equation}
0<h\leq \bar{h}:= \frac{\Gamma}{6(1+\gamma)^2}.
\end{equation} Thus it holds that $\alpha(h) \leq \alpha(\bar{h})$ for all $h \in \left(0, \frac{\Gamma}{6(1+\gamma)^2} \right]$. Substituting this into \eqref{eq:with_alpha} we obtain that for all $n \in \mathbb{N}$ we have
\begin{equation}
|\theta^{n+1}|_2^2 \leq \bar{R}
\end{equation}for some $\bar{R} > 0$ independent of $h,\lambda > 0$.

\end{proof}
We now establish the main ingredients for the convergence proofs in Section \ref{sec:single_loop} and \ref{sec:double_loop}. 
One main ingredient of the proofs is the following Bregman proximal inequality. Let \[M_{\mu} = \left\{ m \in \mathcal{P}(A) | \log\frac{dm}{d\mu} \in B_b(A)\right\}\] and notice this is a convex subset of $\mathcal{P}(A)$. A proof of the following classical three point lemma/ identity can then be found in \cite{Korba22}.
\begin{lemma}[Three point lemma/ Bregman proximal inequality]
Let $G : M_{\mu} \to \mathbb{R}$ be convex. For all $m' \in M_{\mu}$ let
\begin{equation}
m^* = \argmin_{m\in M_{\mu}}\left\{G(m)+\operatorname{KL}(m|m') \right\}.
\end{equation}Then for all $m \in M_{\mu}$ we have
\begin{equation}
G(m) + \operatorname{KL}(m|m') \geq G(m^*) + \operatorname{KL}(m|m^*) + \operatorname{KL}(m^*|m')
\end{equation}
\end{lemma}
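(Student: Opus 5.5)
The plan is to derive the inequality from the first-order optimality condition of $m^*$ along line segments in the convex set $M_{\mu}$, together with an exact algebraic identity for KL divergences. Write $F(m) := G(m) + \operatorname{KL}(m|m')$. Fix $m \in M_{\mu}$ and set $m_t := m^* + t(m - m^*)$ for $t \in [0,1]$. Then $m_t \in M_{\mu}$ by convexity: $dm_t/d\mu$ is a convex combination of two densities whose logarithms are bounded, so its logarithm is also bounded. Minimality of $m^*$ gives $F(m_t) - F(m^*) \geq 0$ for all $t \in (0,1]$.

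Next I split the difference quotient into its two parts and control each separately.

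For the $G$ part, convexity of $G$ gives that $t \mapsto \frac{G(m_t) - G(m^*)}{t}$ is non-decreasing. Evaluating at $t=1$ yields, for every $t \in (0,1]$,
\[
G(m) - G(m^*) \geq \frac{G(m_t) - G(m^*)}{t}.
\]

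For the KL part, I want to show
\[
\lim_{t \downarrow 0} \frac{\operatorname{KL}(m_t|m') - \operatorname{KL}(m^*|m')}{t} = \int_A \log\frac{dm^*}{dm'}\, d(m - m^*).
\]
Write $f_t = dm_t/d\mu$ and $f' = dm'/d\mu$, so that $\operatorname{KL}(m_t|m') = \int_A f_t \log(f_t/f')\,d\mu$. Differentiate under the integral in $t$. The derivative of the integrand is $(\log(f_t/f') + 1)(f_m - f_{m^*})$, where $f_m, f_{m^*}$ are the densities of $m, m^*$. The term $\int_A (f_m - f_{m^*})\,d\mu$ vanishes, which removes the $+1$. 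The interchange of limit and integral is justified by dominated convergence: every log-density in $M_{\mu}$ is bounded, so the ratios $f_t/f'$ are bounded above and below uniformly in $t$.

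Dividing $F(m_t) - F(m^*) \geq 0$ by $t$, inserting the $G$ bound, and letting $t \downarrow 0$ gives
\[
G(m) - G(m^*) + \int_A \log\frac{dm^*}{dm'}\, d(m - m^*) \geq 0.
\]

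Finally I use the three-point identity
\[
\operatorname{KL}(m|m') - \operatorname{KL}(m|m^*) - \operatorname{KL}(m^*|m') = \int_A \log\frac{dm^*}{dm'}\, d(m - m^*).
\]
It follows by expanding the logarithms. First, $\operatorname{KL}(m|m') - \operatorname{KL}(m|m^*) = \int_A \log\frac{dm^*}{dm'}\,dm$. Second, $\operatorname{KL}(m^*|m') = \int_A \log\frac{dm^*}{dm'}\,dm^*$. All terms are finite because the log-densities are bounded. Substituting the identity into the previous inequality gives exactly
\[
G(m) + \operatorname{KL}(m|m') \geq G(m^*) + \operatorname{KL}(m|m^*) + \operatorname{KL}(m^*|m').
\]

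The main obstacle is the differentiability of $t \mapsto \operatorname{KL}(m_t|m')$ at $t=0^+$. This is precisely where restricting to $M_{\mu}$, with its bounded log-densities, is needed: it supplies the uniform bounds for dominated convergence. By contrast, no differentiability of $G$ is required, since the monotone difference quotient of a convex function already gives the needed upper bound on the $G$ term.
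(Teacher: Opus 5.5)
Your proof is correct. The segment $m_t$ stays in $M_\mu$, and convexity of $G$ bounds its difference quotient by $G(m)-G(m^*)$. Dominated convergence, using the bounded log-densities, gives the one-sided derivative of $t\mapsto\operatorname{KL}(m_t|m')$. Combined with the exact identity $\operatorname{KL}(m|m')-\operatorname{KL}(m|m^*)-\operatorname{KL}(m^*|m')=\int_A\log\frac{dm^*}{dm'}\,d(m-m^*)$, these steps yield exactly the claimed inequality.

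The paper gives no proof of its own and only cites \cite{Korba22}. The classical argument for this Bregman proximal inequality is the same scheme you use: first-order optimality along segments plus the three-point identity. So your write-up is essentially that proof specialised to KL, and it has the merit of showing exactly where the restriction to $M_\mu$ is needed.
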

We also need the following crucial observation with a trivial proof.
\begin{lemma}\label{lemma:L_smooth}
Let $F:S \to \mathbb{R}$ be such that $F \leq 0$. Then for any $\pi \in \Pi_{\mu}$ and $s \in S$ it holds that
\begin{equation}
\frac{1}{1-\gamma}\int_{S} F(s')d_{s}^{\pi}(ds') \leq F(s)
\end{equation}
\end{lemma}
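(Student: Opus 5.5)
The plan is to unfold the definition of the state-occupancy kernel \eqref{eq:occupancy_s} and use the fact that its $n=0$ term is a Dirac mass at the starting state. Concretely, $d^{\pi}_s(ds') = d^{\pi}(ds'|s)$ and
\begin{equation}
\frac{1}{1-\gamma}\, d^{\pi}(ds'|s) = \sum_{n=0}^{\infty}\gamma^n P^n_{\pi}(ds'|s), \qquad P^0_{\pi}(ds'|s)=\delta_s(ds').
\end{equation}
Integrating $F$ against this measure, I would interchange the sum and the integral. Since $F\le 0$, this is justified by monotone convergence applied to $-F\ge 0$, or by dominated convergence if $F$ is bounded, which is the case in all applications. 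This gives
\begin{equation}
\frac{1}{1-\gamma}\int_S F(s')\,d^{\pi}_s(ds') = F(s) + \sum_{n=1}^{\infty}\gamma^n\int_S F(s')\,P^n_{\pi}(ds'|s).
\end{equation}

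Each term with $n\ge 1$ is an integral of the nonpositive function $F$ against a probability measure $P^n_{\pi}(\cdot|s)$, so it is $\le 0$. The whole tail sum is therefore $\le 0$, possibly $-\infty$ if $F$ is unbounded below, which only helps. Dropping it yields the claimed inequality $\frac{1}{1-\gamma}\int_S F\,d^{\pi}_s \le F(s)$.

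The only point needing care, and the nearest thing to an obstacle, is measurability and well-definedness of the integral. I would require $F$ to be measurable, as is implicit in the statement. The nonpositivity of $F$ ensures that every integral is well defined in $[-\infty,0]$, so no integrability hypothesis beyond measurability is needed. Nothing about $\pi\in\Pi_\mu$ is used beyond $P_\pi$ being a Markov kernel.
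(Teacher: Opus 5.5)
Your proposal is correct and matches the paper's proof. Both expand $\frac{1}{1-\gamma}d^{\pi}(\cdot|s)$ as $\sum_{n\ge 0}\gamma^n P^n_{\pi}(\cdot|s)$, isolate the $n=0$ Dirac term to get $F(s)$, and drop the remaining nonpositive terms; your justification of the interchange of sum and integral via monotone convergence applied to $-F\ge 0$ is a detail the paper leaves implicit.
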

\begin{proof}
By definition of the state occupancy measure $d^{\pi}(\cdot|s) \in \mathcal{P}(S)$ in \ref{eq:occupancy_s} for any $s \in S$ and the fact that $P_\pi^0(ds'|s) = \delta_s(ds')$, we have for all $s \in S$ and $\pi \in \mathcal{P}(A|S)$ that
\begin{align}
\frac{1}{1-\gamma}\int_{S} F(s')d_{s}^{\pi}(s') &= \int_{S} F(s')P_{\pi}^0(ds'|s) + \sum_{k=1}^{\infty}\int_{S} \gamma^k F(s')P_{\pi}^k(ds'|s) \\
&\leq \int_{S}F(s')\delta_s(ds') = F(s).
\end{align}This concludes the proof.
\end{proof}
Theorem \ref{thm:main_up_to_errors} demonstrates sub-linear convergence of Algorithm \ref{algo:single_loop_ac} and \ref{algo:double_loop_ac} up to a cumulative sum of errors arising from approximating the critic.
\begin{theorem}\label{thm:main_up_to_errors}
For some $\theta^0 = \theta_0$ and $\pi^0 = \pi_0$, let $\left\{\pi^k, \theta^k \right\}_{k\geq 0}$ be the iterates of Algorithm \ref{algo:single_loop_ac} and Algorithm \ref{algo:double_loop_ac}. Suppose that $0 < \tau\lambda < 1$, then for all $\rho \in \mathcal{P}(S)$ and $n \in \mathbb{N}$ it holds that
\begin{align}\label{eq:main_up_to_errors_bound}
&\min_{0 \leq r \leq n-1} V^{\pi^r}_{\tau}(\rho) - V^{\pi^*}_{\tau}(\rho)\\
&\leq \frac{1}{\lambda(1-\gamma)n}\Bigg(\int_{S} \operatorname{KL}(\pi^* | \pi^{0})(s) d_{\rho}^{\pi^*}(ds) + \lambda\left(V^0(\rho) - V^*(\rho) \right) + \lambda c(\gamma)\sum_{k=0}^{n-1}|\theta^{k+1} - \theta_{\pi^k}|_2 \Bigg).
\end{align}
\end{theorem}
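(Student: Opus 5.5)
We follow the standard policy mirror descent analysis of \cite{lan2023policy, david_fisher}. The new ingredient is to carry along the discrepancy between the approximate advantage $A(\cdot,\cdot;\theta^{n+1},\pi^n)$ and the true soft advantage $A^{\pi^n}_\tau$. By Assumption \ref{as:linearmdp} we can write $Q^{\pi^n}_\tau=\inner{\theta_{\pi^n}}{\phi}$. Then, by Definition \ref{def:approximate_advantage_def} and Assumption \ref{as:bounded_phi}, for every $s$ and every pair $\pi,\pi'$ we have
\[
\Big|\int_A \big(A(s,a;\theta^{n+1},\pi^n)-A^{\pi^n}_\tau(s,a)\big)(\pi-\pi')(da|s)\Big|\le 2|\theta^{n+1}-\theta_{\pi^n}|_2 .
\]
This accounts for the error term, with $c(\gamma)$ absorbing constants such as $2$ and the factors of $(1-\gamma)^{-1}$ that arise later.

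\textbf{Step 1: pointwise three point inequality.} The update is pointwise in $s$: $\pi^{n+1}(\cdot|s)$ minimises $\int A(s,a;\theta^{n+1},\pi^n)m(da)+\frac1\lambda\operatorname{KL}(m|\pi^n(\cdot|s))$ over $m$. The integral term is linear, hence convex, so the three point lemma applies with $G=\lambda\int A\,dm$. For any comparator $m=\pi(\cdot|s)$ this gives
\[
\lambda\!\int\! A\,d(\pi^{n+1}-\pi)+\operatorname{KL}(\pi^{n+1}|\pi^n)+\operatorname{KL}(\pi|\pi^{n+1})\le \operatorname{KL}(\pi|\pi^n).
\]
We then replace $A$ by $A^{\pi^n}_\tau$ at a cost of $2\lambda|\theta^{n+1}-\theta_{\pi^n}|_2$. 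The constant shift $V^{\pi^n}_\tau(s)$ integrates to zero against $\pi^{n+1}-\pi$, so what remains is $Q^{\pi^n}_\tau+\tau\log\frac{d\pi^n}{d\mu}$ integrated against $\pi^{n+1}-\pi$.

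\textbf{Step 2: approximate monotone improvement.} Take $\pi=\pi^n$. This shows that $F(s):=\int(Q^{\pi^n}_\tau+\tau\log\frac{d\pi^n}{d\mu})d(\pi^{n+1}-\pi^n)+\frac1\lambda\operatorname{KL}(\pi^{n+1}|\pi^n)$ satisfies $F(s)\le 2|\theta^{n+1}-\theta_{\pi^n}|_2$. Since $\tau\le 1/\lambda$, we have $\tau\operatorname{KL}\le\frac1\lambda\operatorname{KL}$. The performance difference lemma with $\pi=\pi^{n+1}$, $\pi'=\pi^n$ then expresses $V^{\pi^{n+1}}_\tau-V^{\pi^n}_\tau$ as $\frac1{1-\gamma}\int(\dots)\,d^{\pi^{n+1}}_s$ with integrand at most $F$.

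To handle the sign issue we apply Lemma \ref{lemma:L_smooth} to $F-2|\theta^{n+1}-\theta_{\pi^n}|_2\le0$. This yields the pointwise bound
\[
V^{\pi^{n+1}}_\tau(s)-V^{\pi^n}_\tau(s)\le F(s)+\tfrac{2\gamma}{1-\gamma}|\theta^{n+1}-\theta_{\pi^n}|_2\le \tfrac{2}{1-\gamma}|\theta^{n+1}-\theta_{\pi^n}|_2,
\]
which says the value improves up to the critic error.

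\textbf{Step 3: comparison with $\pi^*$ and telescoping.} Take $\pi=\pi^*$ in Step 1 and integrate against $d^{\pi^*}_\rho$. The performance difference lemma with $\pi=\pi^*$, $\pi'=\pi^n$ gives
\[
(1-\gamma)\big(V^*(\rho)-V^{\pi^n}(\rho)\big)=\int\Big[\int(Q^{\pi^n}+\tau\log\tfrac{d\pi^n}{d\mu})d(\pi^*-\pi^n)+\tau\operatorname{KL}(\pi^*|\pi^n)\Big]d^{\pi^*}_\rho .
\]
We write $\pi^*-\pi^n=(\pi^*-\pi^{n+1})+(\pi^{n+1}-\pi^n)$. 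The first part is controlled by Step 1. The second part, together with the positive $\frac1\lambda\operatorname{KL}(\pi^{n+1}|\pi^n)$ term, is the function $F$ from Step 2. By Step 2, $\int F\,d^{\pi^*}_\rho \le V^{\pi^{n+1}}(d^{\pi^*}_\rho)-V^{\pi^n}(d^{\pi^*}_\rho)+\text{err}$. This produces
\[
\lambda(1-\gamma)\big(V^{\pi^{n+1}}(\rho)-V^*(\rho)\big)\lesssim D_n-D_{n+1}+\lambda\,\big[\text{value differences}\big]+\lambda c(\gamma)|\theta^{n+1}-\theta_{\pi^n}|_2,
\]
where $D_n=\int\operatorname{KL}(\pi^*|\pi^n)\,d^{\pi^*}_\rho$ and the $\tau\operatorname{KL}(\pi^*|\pi^n)$ term is dropped using $1-\tau\lambda\ge0$.

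\textbf{Main obstacle.} The delicate step is matching measures: $d^{\pi^*}_\rho$ appears in the performance difference, whereas the claimed bound involves $V^0(\rho)-V^*(\rho)$. I would follow \cite{lan2023policy}. Because Step 2 holds pointwise in $s$, one can write $\int(V^{\pi^{n+1}}-V^{\pi^n})\,d^{\pi^*}_\rho$ and use $V^{\pi^n}-V^*\ge0$ together with the identity $d^{\pi^*}_\rho\ge(1-\gamma)\rho$ to telescope in $\rho$ after the value terms are rearranged. If that fails, I would instead telescope with the $d^{\pi^*}_\rho$-weighted values, which still gives the stated bound after bounding the result by $V^0(\rho)-V^*(\rho)$.

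Summing over $k=0,\dots,n-1$, the KL terms telescope to at most $D_0$. The value terms telescope to at most $\lambda(V^0(\rho)-V^*(\rho))$ plus the accumulated critic errors. Finally, bounding $\min_r$ by the average of $V^{\pi^{r}}(\rho)-V^*(\rho)$ over the $n$ iterates and dividing by $\lambda(1-\gamma)n$ yields \eqref{eq:main_up_to_errors_bound}. Algorithm \ref{algo:double_loop_ac} differs only in how $\theta^{k+1}$ is produced, so the argument applies verbatim.
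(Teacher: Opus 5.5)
Apart from the last step, your route is the paper's: the three-point inequality at $\pi^*$, approximate monotone improvement via the performance difference lemma and Lemma~\ref{lemma:L_smooth}, telescoping of $\int_S\operatorname{KL}(\pi^*|\pi^k)\,d^{\pi^*}_\rho$, and $\min\le$ average. Your handling of the critic error is slightly tighter than the paper's bookkeeping. You switch to the true advantage once, in the three-point inequality, and then apply Lemma~\ref{lemma:L_smooth} to $F-2|\theta^{n+1}-\theta_{\pi^n}|_2\le0$; this gives exactly the stated $c(\gamma)=\frac{2}{1-\gamma}$.

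There are two slips to fix. First, Step~2 gives $F(s)\ge V^{\pi^{n+1}}_\tau(s)-V^{\pi^n}_\tau(s)-\frac{2\gamma}{1-\gamma}|\theta^{n+1}-\theta_{\pi^n}|_2$, not the reverse inequality you quote in Step~3. This lower bound is what you need, since $F$ enters with a minus sign. Second, the per-step left-hand side is $\lambda(1-\gamma)\bigl(V^{\pi^n}_\tau(\rho)-V^{\pi^*}_\tau(\rho)\bigr)$, not the one with $V^{\pi^{n+1}}_\tau$. After summing over $k=0,\dots,n-1$ and using $V^{\pi^n}_\tau\ge V^{\pi^*}_\tau$ pointwise, the argument yields
\[
\lambda(1-\gamma)\sum_{k=0}^{n-1}\bigl(V^{\pi^k}_\tau(\rho)-V^{\pi^*}_\tau(\rho)\bigr)\le\int_S\operatorname{KL}(\pi^*|\pi^0)(s)\,d^{\pi^*}_\rho(ds)+\lambda\int_S\bigl(V^{\pi^0}_\tau-V^{\pi^*}_\tau\bigr)(s)\,d^{\pi^*}_\rho(ds)+\frac{2\lambda}{1-\gamma}\sum_{k=0}^{n-1}|\theta^{k+1}-\theta_{\pi^k}|_2 .
\]

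The genuine gap is the step you flag as the main obstacle: passing from $\int_S(V^{\pi^0}_\tau-V^{\pi^*}_\tau)\,d^{\pi^*}_\rho$ to $V^{\pi^0}_\tau(\rho)-V^{\pi^*}_\tau(\rho)$. Neither of your fixes works. Because $V^{\pi^0}_\tau-V^{\pi^*}_\tau\ge0$, the bound $d^{\pi^*}_\rho\ge(1-\gamma)\rho$ only gives $\int_S(V^{\pi^0}_\tau-V^{\pi^*}_\tau)\,d^{\pi^*}_\rho\ge(1-\gamma)\bigl(V^{\pi^0}_\tau(\rho)-V^{\pi^*}_\tau(\rho)\bigr)$, which is the wrong direction. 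An upper bound in terms of $\rho$ needs $d^{\pi^*}_\rho\le C\rho$, i.e.\ concentrability, which is not assumed. Without it the inequality can fail. Take $S=\{s_0,s_1\}$ with every action sending $s_0$ to the absorbing state $s_1$, let $\rho=\delta_{s_0}$, and let $\pi^0=\pi^*$ at $s_0$ but $\pi^0\ne\pi^*$ at $s_1$. Then $d^{\pi^*}_\rho=(1-\gamma)\delta_{s_0}+\gamma\delta_{s_1}$, and the weighted integral exceeds $V^{\pi^0}_\tau(\rho)-V^{\pi^*}_\tau(\rho)$ by $\gamma(1-\gamma)(V^{\pi^0}_\tau-V^{\pi^*}_\tau)(s_1)>0$.

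The paper's proof makes the same unjustified substitution, so you will not find the missing step there. It integrates $V^{\pi^n}_\tau(s)-V^{\pi^0}_\tau(s)\ge V^{\pi^*}_\tau(s)-V^{\pi^0}_\tau(s)$ against $d^{\pi^*}_\rho$ and records the result as $V^{\pi^*}_\tau(\rho)-V^{\pi^0}_\tau(\rho)$. What both arguments actually prove is the display above. That is, the statement holds with $V^0(\rho)-V^*(\rho)$ replaced by $\int_S(V^{\pi^0}_\tau-V^{\pi^*}_\tau)\,d^{\pi^*}_\rho\le|V^{\pi^0}_\tau-V^{\pi^*}_\tau|_{B_b(S)}$.

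This replacement is an $n$-independent constant, which is all that Theorems~\ref{thm:single_loop_conv} and~\ref{thm:linear_convergence_log_growth} use. The form as stated does hold in two special cases:
\begin{itemize}
\item If $\rho$ is $P_{\pi^*}$-invariant, then $d^{\pi^*}_\rho=\rho$; this is why Lan measures optimality under the stationary distribution of $\pi^*$.
\item Under the concentrability bound $\bigl|\frac{\mathrm{d}d^{\pi^*}_\rho}{\mathrm{d}\rho}\bigr|_{B_b(S)}\le\xi$ of Theorem~\ref{thm:exponential_conv}, it holds with $V^0(\rho)-V^*(\rho)$ multiplied by $\xi$.
\end{itemize}
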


\begin{proof}

To ease notation let $V^{k} := V^{\pi^k}_{\tau}$ for $k \in \mathbb{N}$ and let $V^* := V^{\pi^*}_{\tau}$. Fix $s \in S$ and $\pi^k \in \Pi_{\mu}$. By the Three Point Lemma we have
\begin{align}
&\lambda \int_{A} A(s,a;\theta^{k+1})(\pi - \pi^k)(da|s) + \operatorname{KL}(\pi|\pi^k)(s) \nonumber\\
&\geq \lambda  \int_{A} A(s,a;\theta^{k+1})(\pi^{k+1} - \pi^k)(da|s) + \operatorname{KL}(\pi|\pi^{k+1})(s) + \operatorname{KL}(\pi^{k+1}|\pi^k)(s).
\end{align}
Rearranging, dividing through by $\lambda > 0$ and using that $0 < \tau\lambda < 1$ we have
\begin{align}\label{eq:carry_on}
&\frac{1}{\lambda}\left(\operatorname{KL}(\pi|\pi^{k+1})(s) - \operatorname{KL}(\pi|\pi^k)(s)\right) \nonumber\\
&\leq \int_{A} A(s,a;\theta^{k+1})(\pi - \pi^{k})(da|s) - \int_{A} A(s,a;\theta^{k+1})(\pi^{k+1} - \pi^k)(da|s) - \tau \operatorname{KL}(\pi^{k+1}|\pi^k)(s).
\end{align}
Now define
\begin{equation}
F(s) = \int_{A} A(s,a;\theta^{k+1})(\pi^{k+1} - \pi^k)(da|s) + \tau \operatorname{KL}(\pi^{k+1}|\pi^k)(s).
\end{equation}
Recall that by the policy mirror descent updates (as clarified in the calculations in \eqref{eq:mirror_descent_negative}), it holds that $F(s) \leq 0$ for all $s \in S$. Therefore, by the performance difference lemma and Lemma \ref{lemma:L_smooth} it holds that
\begin{align}
(V^{k+1} - V^{k})(s)
&= \frac{1}{1-\gamma}\int_{S}\left(\int_{A}A^{\pi^k}_{\tau}(s',a)(\pi^{k+1}-\pi^k)(da|s') +\tau \operatorname{KL}(\pi^{k+1}|\pi^k)(s')\right)d_{s}^{\pi^{k+1}}(ds') \nonumber\\
&= \frac{1}{1-\gamma}\int_{S}F(s')d_{s}^{\pi^{k+1}}(ds') + \frac{1}{1-\gamma}\int_{S}\left(A^{\pi^n}_{\tau}(s',a) -A(s',a;\theta^{n+1})\right)(\pi^{n+1} - \pi^{n})(da|s')d_{s}^{\pi^{k+1}}(ds') \\
&\leq \frac{1}{1-\gamma}\int_{S}F(s')d_{s}^{\pi^{k+1}}(ds') + \frac{2}{1-\gamma}|\theta^{k+1} - \theta_{\pi^k}|_2 \nonumber\\
&\leq F(s) + \frac{2}{1-\gamma}|\theta^{k+1} - \theta_{\pi^k}|_2 \label{eq:upper_bound_errors}
\end{align} where we added and subtracted the approximate advantage function in the second equality and used that for all $s \in S$ and $a \in A$, by  Hölder's inequality, Assumption \ref{as:linearmdp} and \ref{as:bounded_phi}, it holds that $A(s,a;\theta^{n+1}) - A^{\pi^n}_{\tau}(s,a) = Q(s,a;\theta^{n+1}) - Q^{\pi^n}_{\tau}(s,a) \leq \left|\theta^{n+1} - \theta_{\pi^n}\right|_2 |\phi(s,a)|_2 \leq \left|\theta^{n+1} - \theta_{\pi^n}\right|_2$.
Substituting this into \eqref{eq:carry_on}, for all $s \in S$ and $n \in \mathbb{N}$ it holds that
\begin{align}
&\frac{1}{\lambda}\left(\operatorname{KL}(\pi|\pi^{k+1})(s) - \operatorname{KL}(\pi|\pi^k)(s)\right) \nonumber\\
&\leq \int_{A} A(s,a;\theta^{k+1})(\pi - \pi^{k})(da|s) - \int_{A} A(s,a;\theta^{k+1})(\pi^{k+1} - \pi^k)(da|s) - \tau \operatorname{KL}(\pi^{k+1}|\pi^k)(s) \nonumber\\
&\leq \int_{A} A(s,a;\theta^{k+1})(\pi - \pi^{k})(da|s) - F(s)\nonumber\\
&\leq \int_{A} A(s,a;\theta^{k+1})(\pi - \pi^{k})(da|s) -\left(V^{k+1}(s) - V^{k}(s) \right) + \frac{2}{1-\gamma}|\theta^{k+1} - \theta_{\pi^k}|_2.
\end{align}
Now let $\pi = \pi^*$. Summing both sides over $k=0,1,\dots,n-1$ for any $n \in \mathbb{N}$ we have that
\begin{align}
&\operatorname{KL}(\pi^*|\pi^{n})(s) - \operatorname{KL}(\pi^*|\pi^{0})(s) \nonumber\\
&\leq \lambda\sum_{k=0}^{n-1}\int_{A} A(s,a;\theta^{k+1})(\pi^* - \pi^k)(da|s) - \lambda (V^{{n}} - V^{0})(s) + \frac{2\lambda}{1-\gamma}\sum_{k=0}^{n-1}|\theta^{k+1} - \theta_{\pi^k}|_2.
\end{align}
After multiplying through by $\lambda > 0$ and once again adding and subtracting the true advantage function, we have
\begin{align}
&\operatorname{KL}(\pi^*|\pi^{n})(s) - \operatorname{KL}(\pi^*|\pi^{0})(s) \nonumber\\
&\leq\lambda\sum_{k=0}^{n-1}\int_{A} A^{\pi^k}_{\tau}(s,a)(\pi^* - \pi^k)(da|s) - \lambda (V^{{n}} - V^{0})(s) \\
&\qquad + \lambda \sum_{k=0}^{n-1}\int_{A} \left(A(s,a;\theta^{n+1}) - A^{\pi^k}_{\tau}(s,a) \right)(\pi^* - \pi^k)(da|s) +\frac{2\lambda}{1-\gamma}\sum_{k=0}^{n-1}|\theta^{k+1} - \theta_{\pi^k}|_2.
\end{align}
Let $c(\gamma) := \max\left\{1,\frac{2}{1-\gamma} \right\}$. Again using that $A(s,a;\theta^{n+1}) - A^{\pi^n}_{\tau}(s,a) \leq \left|\theta^{n+1} - \theta_{\pi^n}\right|_2$ for all $s \in S$, $ a \in A$ and $n \in \mathbb{N}$ it holds that
\begin{align}
&\operatorname{KL}(\pi^*|\pi^{n})(s) - \operatorname{KL}(\pi^*|\pi^{0})(s) \nonumber\\
&\leq \lambda\sum_{k=0}^{n-1}\int_{A} A^{\pi^k}_{\tau}(s,a)(\pi^* - \pi^k)(da|s) - \lambda (V^{{n}} - V^{0})(s) + \lambda c(\gamma)\sum_{k=0}^{n-1}|\theta^{k+1} - \theta_{\pi^k}|_2.
\end{align} Moreover, using that $(V^{n}(s) - V^{0}(s)) \geq (V^{*}(s) - V^{0}(s))$, which holds for all $s \in S$ by the definition of $V^*$, and integrating over $d_{\rho}^{\pi^*}\in \mathcal{P}(S)$ we obtain
\begin{align}
&\int_{S} \operatorname{KL}(\pi^* | \pi^{n})(s) d_{\rho}^{\pi^*}(ds) - \int_{S} \operatorname{KL}(\pi^* | \pi^{0})(s) d_{\rho}^{\pi^*}(ds) \nonumber\\
&\leq \lambda\sum_{k=0}^{n-1}\int_{S}\int_{A} A^{\pi^k}_{\tau}(s,a)(\pi^* - \pi^k)(da|s) d_{\rho}^{\pi^*}(ds)
- \lambda (V^{*}(\rho) - V^{0}(\rho)) + \lambda c(\gamma)\sum_{k=0}^{n-1}|\theta^{k+1} - \theta_{\pi^k}|_2.
\end{align}
Finally, applying the performance difference lemma on the first term on the right hand side, we obtain
\begin{align}
&\int_{S} \operatorname{KL}(\pi^* | \pi^{n})(s) d_{\rho}^{\pi^*}(ds) - \int_{S} \operatorname{KL}(\pi^* | \pi^{0})(s) d_{\rho}^{\pi^*}(ds) \nonumber\\
&\leq \lambda(1-\gamma)\sum_{k=0}^{n-1}\left((V^* - V^k)(\rho) - \tau\int_{S} \operatorname{KL}(\pi^*|\pi^k)(s)d_{\rho}^{\pi^*}(ds) \right) -\lambda (V^{*}(\rho) - V^{0}(\rho)) \nonumber\\
&\qquad + \lambda c(\gamma)\sum_{k=0}^{n-1}|\theta^{k+1} - \theta_{\pi^k}|_2.
\end{align}
Rearranging, dropping negative terms from the right hand side and using that the minimum of a sequence is less than or equal to the average, for all $\rho \in \mathcal{P}(S)$ and $n \in \mathbb{N}$ it holds that

\begin{align}
&\min_{0 \leq r \leq n-1} V^{\pi^r}_{\tau}(\rho) - V^{\pi^*}_{\tau}(\rho)\\
&\leq \frac{1}{\lambda(1-\gamma)n}\Bigg(\int_{S} \operatorname{KL}(\pi^* | \pi^{0})(s) d_{\rho}^{\pi^*}(ds) + \lambda\left(V^0(\rho) - V^*(\rho) \right) + \lambda c(\gamma)\sum_{k=0}^{n-1}|\theta^{k+1} - \theta_{\pi^k}|_2 \Bigg).
\end{align}
which is exactly \eqref{eq:main_up_to_errors_bound} and thus concludes the proof.
\end{proof}

\section{Proofs of Section \ref{sec:single_loop}}
\subsection{Proof of Lemma \ref{lemma:theta_recursion_1}}\label{sec:proof_theta_recursion}
\begin{proof}
By definition of the critic updates in Algorithm \ref{algo:single_loop_ac}, for any $n \in \mathbb{N}$ it holds that 
\begin{align}\label{eq:lyaponov_theta}
|\theta^{n+1}|_2^2 &= |\theta^n - h g(\theta^n,\pi^n)|_2^2 \\
&= |\theta^n|_2^2 + h^2 |g(\theta^n,\pi^n)|_2^2 -2h \inner{\theta^n}{g(\theta^n,\pi^n)} \\
&= |\theta^n|_2^2 + h^2 I_1^n -2h I_2^n.
\end{align} where we let $I_1^n = |g(\theta^n,\pi^n)|_2^2$ and $I_2^n = \inner{\theta^n}{g(\theta^n,\pi^n)}$. Now let $\Gamma := \lambda_{\beta}(1-\gamma)(1-\sqrt{\gamma})$ and $\mathrm{K}_{n} = \sup_{s \in S} \operatorname{KL}(\pi^{n}(\cdot|s)|\mu)$. Lemma 5.2 of \cite{zorba2025convergenceactorcriticentropyregularised} shows that for any $\theta \in \mathbb{R}^{N}$ and $ \pi \in \Pi_{\mu}$, it holds that
\begin{align}
-\inner{g(\theta,\pi)}{\theta} &\leq -\frac{\Gamma}{2}|\theta|_2^2 + \frac{\tau^2\gamma^2}{\Gamma}\sup_{s \in S} \operatorname{KL}(\pi(\cdot|s)|\mu) + \frac{|c|_{B_b(S\times A)}^2}{\Gamma}.
\end{align}Therefore, for all $n \in \mathbb{N}$ we can upper bound $I_2^n$ as
\begin{align}
I_2^n = -2h\inner{g(\theta^n,\pi^n)}{\theta^n} &\leq 2h\left(-\frac{\Gamma}{2}|\theta^n|_2^2 + \frac{\tau^2\gamma^2}{\Gamma}\mathrm{K}_n^2 + \frac{|c|_{B_b(S\times A)}^2}{\Gamma} \right).
\end{align}Moreover, using the identity $(a + b + c)^2 \leq 3(a^2 + b^2 + c^2)$, a direct calculation shows that
\begin{align}
I_1^n = |g(\theta^n,\pi^n)|_2^2 \leq 3(1+\gamma)^2|\theta^n|_2^2 + 3|c|_{B_b(S\times A)}^2 + 3\tau^2\gamma^2 \mathrm{K}_n^2.
\end{align}Substituting these upper bounds into \eqref{eq:lyaponov_theta} it holds that
\begin{align}\label{eq:recursion_in_proof}
|\theta^{n+1}|_2^2 
&\leq \left(1+3h^2(1+\gamma)^2 - h\Gamma\right)|\theta^n|_2^2 
+ \left(3h^2\tau^2\gamma^2 + \frac{2h\tau^2\gamma^2}{\Gamma} \right)\mathrm{K}_n^2 
+ |c|_{B_b(S\times A)}^2\left(3h^2 + \frac{2h}{\Gamma} \right).
\end{align}
Now we choose $h > 0$ such that
\begin{align}
0<\left(1+3h^2(1+\gamma)^2 - h\Gamma\right) < 1,
\end{align}
which is achieved when $0<h < \frac{\Gamma}{3(1+\gamma)^2}$. Now let $\mathrm{M}_{n} := \sup_{0 \leq r \leq n} \mathrm{K}_{n}$, iterating \eqref{eq:recursion_in_proof} using that $\sum_{n=0}^{\infty} a^n = \frac{1}{1-a}$ for $0 < a < 1$, it holds that
\begin{align}\label{eq:sub_LN_in}
|\theta^{n+1}|_2^2 
&\leq |\theta_0|_2^2
+ \frac{\tau^2\gamma^2\left(3h + \frac{2}{\Gamma}\right)}{\Gamma - 3h(1+\gamma)^2}\mathrm{M}_n^2
+ \frac{|c|_{B_b(S\times A)}^2\left(3h + \frac{2}{\Gamma}\right)}{\Gamma - 3h(1+\gamma)^2},
\end{align}which concludes the proof.

\end{proof}

\subsection{Proof of Corollary \ref{cor:finite_action_space}}\label{sec:proof_corrolary} 
\begin{proof}
Since $|A| < \infty$ by assumption and $\mu\in \mathcal{P}(A)$ has full support on $A$, for all $s \in S$ and $n \in \mathbb{N}$ it holds that $\operatorname{KL}(\pi^{n}(\cdot|s)|\mu) \leq \log |A|$. Therefore, by Lemma \ref{lemma:everyhing_bdd_if_KL_bdd} 
\begin{align}
|\theta^{n+1}|_2^2
&\leq |\theta_0|_2^2 + \frac{\tau^2\gamma^2\left(3h + \frac{2}{\Gamma}\right)}{\Gamma - 3h(1+\gamma)^2}
\sup_{0 \leq r \leq n}\mathrm{K}_{r}^2  + \frac{|c|_{B_b(S\times A)}^2\left(3h + \frac{2}{\Gamma}\right)}{\Gamma - 3h(1+\gamma)^2} \\
&\leq |\theta_0|_2^2 + \frac{\tau^2\gamma^2\left(3h + \frac{2}{\Gamma}\right)}{\Gamma - 3h(1+\gamma)^2}\left(\log |A| \right)^2  + \frac{|c|_{B_b(S\times A)}^2\left(3h + \frac{2}{\Gamma}\right)}{\Gamma - 3h(1+\gamma)^2}.
\end{align}Finally, identically to Lemma \ref{lemma:everyhing_bdd_if_KL_bdd} we can make the right hand side independent of $h, \lambda >0$ by restricting $0 < h < \frac{\Gamma}{6(1+\gamma)^2}$.
\end{proof}

\subsection{Proof of Lemma \ref{lemma:log_recursion}}\label{sec:proof_log_recursion}

\begin{proof}

To ease the notation, through the analysis we drop the dependence on the policy in the approximate advantage function, i.e $A(s,a;\theta^{n+1},\pi^{n}) := A(s,a;\theta^{n+1})$. 
Then by the policy updates in Algorithm \ref{algo:single_loop_ac}, it holds that
\begin{equation}
\pi^{n+1}(\cdot|s) = \argmin_{m \in P(A)}\left\{\int_{A} A(s,a;\theta^{n+1})(m(da) - \pi^n(da|s)) + \frac{1}{\lambda}\operatorname{KL}(m|\pi^n(\cdot|s)) \right\}
\end{equation} for each $s \in S$. Lemma 1.4.3 of \cite{dupuis1997weak} shows that for each $s \in S$ the minimiser is achieved at
\begin{equation}
\log \frac{d\pi^{n+1}}{d\mu}(s,a)
=
\log \frac{d\pi^{n}}{d\mu}(s,a)
- \lambda\bigl(A(s,a;\theta^{n+1}) - \log(Z_n(s))\bigr)
\end{equation}

such that $Z_n(s) = \log \left(\int_{A}\exp\left(A(s,a';\theta^{n+1}) \right) \pi^{n}(da'|s)\right)$. Thus it holds that
\begin{align}
l_{n+1}(s,a) &= \log \frac{d\pi^{n}}{d\mu}(s,a) - \lambda(A(s,a;\theta^{n+1}) - \log Z_n(s)) \\
&\qquad - \left(\int_{A} \left(\log \frac{d\pi^{n}}{d\mu}(s,a') - \lambda(A(s,a';\theta^{n+1}) - \log Z_n(s)) \right)\mu(da') \right).
\end{align}Cancelling out the normalisation constants $Z_n(s)$ for each $s \in S$ (as they are independent of $a \in A$) and using the definition of the approximate advantage function \eqref{eq:approximate_advantage}, we have
\begin{align}
l_{n+1}(s,a) &= l_{n}(s,a)
-\lambda\Bigg(Q(s,a;\theta^{n+1}) - \int_{A}Q(s,a';\theta^{n+1})\mu(da') + \tau l_n(s,a) \Bigg).
\end{align}After grouping like terms we arrive at
\begin{align}
l_{n+1}(s,a) = (1-\tau\lambda)l_n(s,a) -\lambda \left( Q(s,a;\theta^{n+1}) - \int_{A}Q(s,a';\theta^{n+1})\mu(da') \right).
\end{align} Taking the $|\cdot|_{B_b(S\times A)}$ norm on both sides and using the triangle inequality on the right hand side yields
\begin{align}
\left| l_{n+1} \right|_{B_b(S\times A)} 
\leq (1-\tau\lambda) \left| l_{n}\right|_{B_b(S\times A)} +2\lambda\left| Q(\cdot,\cdot;\theta^{n+1})\right|_{B_b(S\times A)}.
\end{align}

\end{proof}

\subsection{Proof of Theorem \ref{thm:KL_BDD_small_gamma}}\label{sec:proof_KL_bdd_small_gamma}
\begin{proof}
By Lemma \ref{lemma:log_recursion}, for all $n \in \mathbb{N}$ it holds that
\begin{align}
|l_{n+1}|_{B_b(S\times A)} &\leq (1-\tau\lambda)|l_{n}|_{B_b(S\times A)} + 2\lambda\left| Q(\cdot,\cdot;\theta^{n+1})\right|_{B_b(S\times A)} \\
&\leq (1-\tau\lambda)|l_{n}|_{B_b(S\times A)} + 2\lambda |\theta^n|_2
\end{align}where we used that $\left| Q(\cdot,\cdot;\theta^{n+1})\right|_{B_b(S\times A)} \leq |\phi(s,a)|_2 |\theta^n|_2 \leq |\theta^n|_2$ which holds by Hölder's inequality and Assumption \ref{as:bounded_phi}. Moreover, by assumption we have $ 0 < \tau\lambda < 1$, thus iterating this recursion we have
\begin{align}
\left| l_{n+1} \right|_{B_b(S\times A)}
&\leq (1-\tau\lambda)^{n+1}\left| l_0\right|_{B_b(S\times A)} + 2\lambda \sum_{k=0}^{n} (1-\tau\lambda)^{n-k}|\theta^{k+1}|_2. \\
&\leq \label{eq:bounded_finite_actions}C_1 + 2\lambda \sum_{k=0}^{n} (1-\tau\lambda)^{\,n-k}\,|\theta^{k+1}|_2,
\end{align}
where $C_1 >0$ is such that $\left|l_0\right|_{B_b(S\times A)} \leq C_1$, which holds without loss of generality since $\pi_0 \in \Pi_{\mu}$.
Now squaring both sides, using the identity $(a+b)^2 \leq 2a^2 + 2b^2$ and Hölder's inequality we have
\begin{align}
\left| l_{n+1} \right|_{B_b(S\times A)}^2 
&\leq  C_1^2 
+ 4\lambda^2 \left(\sum_{k=0}^{n} (1-\tau\lambda)^{n-k}|\theta^{k+1}|_2\right)^2 \nonumber \\
&\leq C_1^2+ \frac{4\lambda}{\tau} \sum_{k=0}^n (1-\tau\lambda)^{n-k}|\theta^{k+1}|_2^2. \label{eq:recursion_theta_log}
\end{align}

Now we turn to Lemma \ref{lemma:theta_recursion_1}, where given that $0<h < \frac{\Gamma}{3(1+\gamma)^2}$ for all $n \in \mathbb{N}$ it holds that
\begin{align}\label{eq:sub_LN_in}
|\theta^{n+1}|_2^2 
&\leq |\theta_0|_2^2
+ \frac{\tau^2\gamma^2\left(3h + \frac{2}{\Gamma}\right)}{\Gamma - 3h(1+\gamma)^2}\mathrm{M}_n^2
+ \frac{|c|_{B_b(S\times A)}^2\left(3h + \frac{2}{\Gamma}\right)}{\Gamma - 3h(1+\gamma)^2}.
\end{align} with $\mathrm{M}_{n} := \sup_{0 \leq r \leq n} \mathrm{K}_{n}$ and $\mathrm{K}_n := \sup_{s \in S} \operatorname{KL}(\pi^n(\cdot|s)|\mu)$. By Lemma \ref{lemma:technical_KL_ln} it holds that for all $\operatorname{KL}(\pi^{n}(\cdot|s) | \mu) \leq 2 |l_{n}|_{B_b(S \times A)}$ for all $s \in S$ and $ n \in \mathbb{N}$. Thus it also holds that $\mathrm{M}_{n}^2 \leq 4\mathrm{L}_n^2$ for all $n \in \mathbb{N}$ and hence we have
\begin{align}
|\theta^{n+1}|_2^2 
&\leq |\theta_0|_2^2
+ \frac{4\tau^2\gamma^2\left(3h + \frac{2}{\Gamma}\right)}{\Gamma - 3h(1+\gamma)^2}\mathrm{L}_n^2
+ \frac{|c|_{B_b(S\times A)}^2\left(3h + \frac{2}{\Gamma}\right)}{\Gamma - 3h(1+\gamma)^2}.
\end{align}

Substituting this into \eqref{eq:recursion_theta_log} and using that $\sum_{m=0}^{n} (1-\tau \lambda)^{m} \leq \frac{1}{\tau\lambda}$ for any $n \in \mathbb{N}$, a direct calculation shows that
\begin{equation}\label{eq:recursion_before_lemma}
|l_{n+1}|_{B_b(S\times A)}^2 \leq \kappa(h) \mathrm{L}_{n}^2 + C_2(h)
\end{equation} such that
\begin{equation}
\kappa(h) := \frac{16\gamma^2(3h + \frac{2}{\Gamma})}{\Gamma -3h(1+\gamma)^2}, \quad C_2(h) := C_1^2+\frac{4}{\tau^2}\,|\theta_0|_2^2+\frac{4\,|c|_{B_b(S\times A)}^2\left(3h + \frac{2}{\Gamma}\right)}{\tau^2\left(\Gamma - 3h(1+\gamma)^2\right)}.
\end{equation} A direct calculation shows that given that \begin{equation}
0 < h < \min\left\{
\frac{\Gamma}{3(1+\gamma)^2}\,,
\;
\frac{\Gamma^2 - 32\gamma^2}{\Gamma\bigl(48\gamma^2 + 3(1+\gamma)^2\bigr)}
\right\},
\end{equation} then the conditions of Lemma \ref{lemma:theta_recursion_1} and $0<\kappa(h) < 1$ both hold, provided that $\frac{32\gamma^2}{\Gamma^2} < 1$. Therefore, by Lemma \ref{lemma:sup_lemma} for all $n \geq 0$ it holds that
\begin{align}
\mathrm{L}_{n}^2 &\leq \mathrm{L}_0^2 + \frac{C_2(h)}{1-\kappa(h)}.
\end{align} Furthermore, by Lemma \ref{lemma:technical_KL_ln} it holds that $\operatorname{KL}(\pi^{n}(\cdot|s)|\mu) \leq \mathrm{K}_{n} \leq 2\mathrm{L}_{n}$ for all $s \in S$, therefore we have
\begin{align}
\operatorname{KL}(\pi^{n}(\cdot|s)|\mu)^2 &\leq 4\mathrm{L}_0^2 + \frac{4C_2(h)}{1-\kappa(h)}.
\end{align}
Finally, we further restrict the critic step size by defining
\begin{equation}\label{eq:h_def}
\bar h := \frac{1}{2}\min\left\{
\frac{\Gamma}{3(1+\gamma)^2}\,,
\frac{\Gamma^2 - 32\gamma^2}{\Gamma\bigl(48\gamma^2 + 3(1+\gamma)^2\bigr)}
\right\},
\end{equation}
and restricting $0<h\leq \bar h$. Since $0<\kappa(h)<1$ on $(0,\bar h]$, we have $1-\kappa(h)\geq 1-\kappa(\bar h)>0$. Moreover, both $\kappa(h)$ and $C_2(h)$ are non-decreasing in $h$ on $(0,\bar h]$, hence
\[
\kappa(h)\leq \kappa(\bar h), \qquad C_2(h)\leq C_2(\bar h), \qquad 0<h\leq \bar h.
\]
Therefore, for all $n\geq 0$ and all $s\in S$,
\begin{align}
\operatorname{KL}(\pi^{n}(\cdot|s)|\mu)^2 
&\leq 4\mathrm{L}_0^2 + \frac{4C_2(h)}{1-\kappa(h)} \leq 4\mathrm{L}_0^2 + \frac{4C_2(\bar h)}{1-\kappa(\bar h)}:= R,
\end{align}
where $R$ depends only on $\gamma,\tau,\pi_0,|\theta_0|_2,|c|_{B_b(S\times A)}, \lambda_{\beta}$ and is independent of $h > 0$.

\end{proof}
\subsection{Proof of Theorem \ref{thm:lipschitz_holder}}
\label{sec:proof_lipschitz}
\begin{proof}
Recall that $Q^{\pi} \in B_{b}(S \times A)$ is a fixed point of the Bellman operator defined in \eqref{eq:bellman_operator_Def} for all $\pi \in \Pi_{\mu}$. Hence it holds that
\begin{align}
Q^{\pi}(s,a) - Q^{\pi'}(s,a) &= \gamma \int_{S\times A}Q^{\pi}(s',a')\pi(da'|s')P(ds'|s,a) \\
&\qquad- \gamma\int_{S\times A}Q^{\pi'}(s',a')\pi'(da'|s')P(ds'|s,a) \\
&\qquad + \tau \gamma \left(\int_{S} \left(\operatorname{KL}(\pi|\mu)(s') - \operatorname{KL}(\pi'|\mu)(s') \right)P(ds'|s,a) \right).
\end{align}
Now we plus and minus $\gamma \int_{S\times A}Q^{\pi'}(s',a')\pi(da'|s')P(ds'|s,a)$ on the right hand side for each $s \in S$ and $a \in A$ to arrive at
\begin{align}
Q^{\pi}(s,a) - Q^{\pi'}(s,a) &= \gamma \int_{S\times A}\left(Q^{\pi}(s',a') - Q^{\pi'}(s',a')\right)\pi(da'|s')P(ds'|s,a) \\
&\qquad + \gamma \int_{S \times A} Q^{\pi'}(s',a') (\pi - \pi')(da'|s') \\
&\qquad + \tau \gamma \left(\int_{S} \left(\operatorname{KL}(\pi|\mu)(s') - \operatorname{KL}(\pi'|\mu)(s') \right)P(ds'|s,a) \right).
\end{align}
Rearranging and taking the $|\cdot|_{B_b(S\times A)}$ norm on both sides and using the triangle inequality we have

\begin{align}\label{eq:difference_Q_functions}
\left|Q^{\pi} - Q^{\pi'} \right|_{B_b(S \times A)} &\leq \frac{\gamma}{1-\gamma} \left| Q^{\pi'}\right|_{B_b(S\times A)} \sup_{s \in S} \left|\pi(\cdot|s) - \pi'(\cdot|s) \right|_{\mathcal{M}(A)} \\
&\qquad +\frac{\tau \gamma}{1-\gamma} \left| \operatorname{KL}(\pi|\mu)- \operatorname{KL}(\pi'|\mu) \right|_{B_b(S \times A)}
\end{align}We shift our focus to last term on the right hand side. By definition for any $\pi \in \Pi_{\mu}$ it holds that
\begin{align}
&\left|\operatorname{KL}(\pi|\mu) - \operatorname{KL}(\pi'|\mu)\right|_{B_b(S)}  \\
&= \left| \int_{A} \log \frac{d\pi}{d\mu}(\cdot,a) \pi(da|\cdot) - \int_{A} \log \frac{d\pi'}{d\mu}(\cdot,a) \pi'(da|\cdot)\right|_{B_b(S)} \\
&= \left| \int_{A} \log \left(\frac{d\pi}{d\pi'}(\cdot,a) \frac{d\pi'}{d\mu}(\cdot,a)\right) \pi(da|\cdot) - \int_{A} \log \frac{d\pi'}{d\mu}(\cdot,a) \pi'(da|\cdot)\right|_{B_b(S)} \\
&= \left| \int_{A} \log \frac{d\pi}{d\pi'}(\cdot,a)\pi(da|\cdot) + \int_{A} \log \frac{d\pi'}{d\mu}(\cdot,a)\pi(da|\cdot) - \int_{A} \log \frac{d\pi'}{d\mu}(\cdot,a) \pi'(da|\cdot)\right|_{B_b(S)} \\
&\leq \sup_{s \in S}\operatorname{KL}(\pi|\pi')(s) + \left| \int_{A} \log \frac{d\pi'}{d\mu}(\cdot,a)\left( \pi - \pi'\right)(da|\cdot)\right|_{B_b(S)} \\
&= \sup_{s \in S}\operatorname{KL}(\pi|\pi')(s) + \left| \int_{A} \left(\log \frac{d\pi'}{d\mu}(\cdot,a) - \int_{A}\log \frac{d\pi'}{d\mu}(\cdot,a)\mu(da)\right)\left(\pi - \pi'\right)(da|\cdot)\right|_{B_b(S)}.
\end{align}Now let $\pi = \pi^{n+1}$ and $\pi'=\pi^{n}$. By Pinsker's inequality, for all $n \in \mathbb{N}$ it holds that
\begin{align}
\left|\operatorname{KL}(\pi^{n+1}|\mu) - \operatorname{KL}(\pi^{n}|\mu)\right|_{B_b(S)}&\leq \sup_{s \in S} \operatorname{KL}(\pi^{n+1}|\pi^n)(s) + \left| l_{n}\right|_{B_b(S\times A)} \sup_{s \in S}\left| \pi^{n+1}(\cdot|s) - \pi^{n}(\cdot|s) \right|_{\mathcal{M}(A)} \\ 
&\leq \sup_{s \in S}\operatorname{KL}(\pi^{n+1}|\pi^n)(s) + \sqrt{2}\left|l_n\right|_{B_b(S\times A)} \sup_{s \in S}\operatorname{KL}(\pi^{n+1}|\pi^n)(s)^\frac{1}{2}.
\end{align}
Substituting into \eqref{eq:difference_Q_functions} and using Pinsker's inequality once again we arrive at
\begin{align}
\left|Q^{\pi^{n+1}}_{\tau} - Q^{\pi^{n}}_{\tau} \right|_{B_b(S \times A)} &\leq  \frac{\gamma}{1-\gamma} \left| Q^{\pi^{n}}_{\tau}\right|_{B_b(S\times A)} \sup_{s \in S}\operatorname{KL}(\pi^{n+1}|\pi^{n})(s)^{\frac{1}{2}} \\
&\qquad + \frac{\tau \gamma}{1-\gamma} \sup_{s \in S} \left( \operatorname{KL}(\pi^{n+1}|\pi^{n})(s) + \sqrt{2}\left| l_n\right|_{B_b(S\times A)} \operatorname{KL}(\pi^{n+1}|\pi^n)(s)^\frac{1}{2} \right).
\end{align} After grouping terms we have
\begin{align}
\left|Q^{\pi^{n+1}}_{\tau} - Q^{\pi^{n}}_{\tau} \right|_{B_b(S \times A)} &\leq  \left( \frac{\sqrt{2}\tau\gamma  \left| l_n\right|_{B_b(S\times A)}}{1-\gamma}+ \frac{\gamma \left| Q^{\pi^{n}}_{\tau}\right|_{B_b(S\times A)}}{1-\gamma} \right) \sup_{s \in S} \operatorname{KL}(\pi^{n+1}|\pi^n)(s)^\frac{1}{2}  \\&\qquad+ \frac{\tau \gamma}{1-\gamma}\sup_{s \in S} \operatorname{KL}(\pi^{n+1}|\pi^n)(s).
\end{align} Since by assumption there exists $R \geq 0$ such that for any $n \in \mathbb{N}$ and $s \in S$ we have $ |\theta^n|_2 + \operatorname{KL}(\pi^n(\cdot|s)|\mu) \leq R$, by Lemma \ref{lemma:everyhing_bdd_if_KL_bdd} there exists $\alpha_1 \geq 0$ such that $ \frac{\sqrt{2}\tau\gamma  \left| l_n\right|_{B_b(S\times A)}}{1-\gamma}+ \frac{\gamma \left| Q^{\pi^{n}}_{\tau}\right|_{B_b(S\times A)}}{1-\gamma} \leq \alpha_1$ for all $n \in \mathbb{N}$. Hence letting $\alpha_2 = \frac{\tau\gamma}{1-\gamma}$ we conclude the proof.
\end{proof}

\subsection{Proof of Theorem \ref{thm:single_loop_conv}}
Before we address the proof of Theorem \ref{thm:single_loop_conv} we must first establish the following regularity property of the policies produced by Algorithm \ref{algo:single_loop_ac}.

\begin{lemma}\label{lemma:consecutive_KL_bounded}
For some $\theta^0 = \theta_0 \in \mathbb{R}^{N}$ and $\pi^0 = \pi_0 \in \Pi_{\mu}$ let $\{\theta^n,\pi^n\}_{n \in \mathbb{N}}$ be the iterates of Algorithm \ref{algo:single_loop_ac}. Let $0<\lambda\tau < 1$. Then for all $n \in \mathbb{N}$ and $s \in S$, it holds that
\begin{equation}
\operatorname{KL}(\pi^{n+1}(\cdot|s)|\pi^{n}(\cdot|s)) \leq \frac{\lambda}{1-\lambda\tau}|\theta^{n+1}|_2
\end{equation}
\end{lemma}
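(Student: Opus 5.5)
The plan is to work state by state, using the explicit Gibbs form of the mirror-descent update derived in the proof of Lemma~\ref{lemma:log_recursion}. Fix $s\in S$ and write $A(a):=A(s,a;\theta^{n+1},\pi^n)$. That proof gives
\[
\log\frac{d\pi^{n+1}}{d\pi^n}(s,a) = -\lambda A(a) - \log\int_A e^{-\lambda A(a')}\pi^n(da'|s).
\]
I will integrate this against $\pi^{n+1}(\cdot|s)$. The key observation is that $A$ is centred under $\pi^n$, that is $\int_A A\,d\pi^n(\cdot|s)=0$, by Definition~\ref{def:approximate_advantage_def}. Jensen's inequality then gives $\log\int e^{-\lambda A}d\pi^n\ge -\lambda\int A\,d\pi^n = 0$. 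Hence
\[
\operatorname{KL}(\pi^{n+1}|\pi^n)(s)\le -\lambda\int_A A\,(\pi^{n+1}-\pi^n)(da|s).
\]
This is the same inequality that comparing the minimiser with $m=\pi^n$ in $\tilde G$ yields, so either route may be used.

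Next I expand the approximate advantage into its critic part and its entropic part. The critic part is $\int_A Q(s,a;\theta^{n+1})(\pi^{n+1}-\pi^n)(da|s)$. For the entropic part, I rewrite $\int\log\frac{d\pi^n}{d\mu}\,d\pi^{n+1}=\operatorname{KL}(\pi^{n+1}|\mu)-\operatorname{KL}(\pi^{n+1}|\pi^n)$, so that
\[
\tau\int_A\log\tfrac{d\pi^n}{d\mu}\,(\pi^{n+1}-\pi^n)(da|s)=\tau\bigl(\operatorname{KL}(\pi^{n+1}|\mu)-\operatorname{KL}(\pi^n|\mu)\bigr)(s)-\tau\operatorname{KL}(\pi^{n+1}|\pi^n)(s).
\]
Moving the term $-\lambda\tau\operatorname{KL}(\pi^{n+1}|\pi^n)$ to the left produces the factor $(1-\lambda\tau)>0$; this positivity is exactly why the hypothesis $0<\lambda\tau<1$ is needed. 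The critic part is then bounded using Hölder's inequality and Assumption~\ref{as:bounded_phi}, which give $|Q(\cdot,\cdot;\theta^{n+1})|_{B_b}\le|\theta^{n+1}|_2$. To avoid a factor $2$, I would centre $Q$ before pairing with $\pi^{n+1}-\pi^n$ and use the oscillation form of the duality between $B_b(A)$ and $\mathcal M(A)$. Dividing by $(1-\lambda\tau)$ then gives the claimed bound $\frac{\lambda}{1-\lambda\tau}|\theta^{n+1}|_2$.

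The main obstacle is the leftover term $-\lambda\tau\bigl(\operatorname{KL}(\pi^{n+1}|\mu)-\operatorname{KL}(\pi^n|\mu)\bigr)(s)$, whose sign is not a priori controlled. My first attempt would be to avoid creating it at all. Using the recursion $l_{n+1}=(1-\tau\lambda)l_n-\lambda\tilde Q_{n+1}$ from the proof of Lemma~\ref{lemma:log_recursion}, where $\tilde Q_{n+1}$ denotes $Q(\cdot,\cdot;\theta^{n+1})$ centred under $\mu$, I would write
\[
\log\frac{d\pi^{n+1}}{d\pi^n}=-\tau\lambda\, l_n-\lambda\tilde Q_{n+1}+\text{const}(s).
\]
I would then apply the Jensen step to $\pi^{n+1}$ viewed as a geometric interpolation between $\pi^n$ and a tilt of $\mu$. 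Convexity of $\operatorname{KL}(\cdot|\mu)$ along this interpolation should absorb the entropic difference into the $(1-\lambda\tau)$ factor. If that fails, the fallback is to bound the difference by $\lambda\tau|\tilde Q_{n+1}|$-type terms through the same recursion. That fallback only changes the constant in front of $|\theta^{n+1}|_2$, not the structure of the bound.
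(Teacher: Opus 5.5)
Your reduction is sound up to the point you flag: the Jensen step, the identity $\int_A\log\frac{d\pi^n}{d\mu}\,d\pi^{n+1}=\operatorname{KL}(\pi^{n+1}|\mu)-\operatorname{KL}(\pi^{n+1}|\pi^n)$, and the resulting factor $1-\lambda\tau$ are all correct. The obstacle you identify, however, cannot be removed, because the inequality as stated is false. So neither your convexity idea nor your fallback can close the argument.

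Suppose $\theta^{n+1}=0$. Algorithm~\ref{algo:single_loop_ac} can produce this: for the admissible $h$ the TD step $\theta\mapsto\theta-hg(\theta,\pi^0)$ is an invertible affine map, so one can choose $\theta_0$ with $\theta^1=0$. The recursion in the proof of Lemma~\ref{lemma:log_recursion} then gives $l_{n+1}=(1-\tau\lambda)l_n$, i.e.\ $\pi^{n+1}(da|s)\propto\bigl(\frac{d\pi^n}{d\mu}(s,a)\bigr)^{1-\tau\lambda}\mu(da)$. Hence $\frac{d\pi^{n+1}}{d\pi^n}(s,\cdot)\propto\bigl(\frac{d\pi^n}{d\mu}(s,\cdot)\bigr)^{-\tau\lambda}$, which is non-constant whenever $\pi^n(\cdot|s)\neq\mu$ (e.g.\ two actions, $\mu$ uniform, $\pi^n(\cdot|s)=(p,1-p)$ with $p\neq\frac12$). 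So $\operatorname{KL}(\pi^{n+1}|\pi^n)(s)>0=\frac{\lambda}{1-\lambda\tau}|\theta^{n+1}|_2$.

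In your decomposition this is exactly the leftover term. With zero critic the step is a geometric interpolation towards $\mu$, so $\lambda\tau\bigl(\operatorname{KL}(\pi^n|\mu)-\operatorname{KL}(\pi^{n+1}|\mu)\bigr)(s)>0$ even though $\tilde Q_{n+1}\equiv 0$. That term is driven by $l_n$, not by $\theta^{n+1}$, so it cannot be bounded by ``$\lambda\tau|\tilde Q_{n+1}|$-type terms''. Your fallback would therefore change the form of the bound, not just its constant.

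Separately, centring $Q$ does not remove the factor $2$. The oscillation bound gives $\frac12\operatorname{osc}(Q)\,|\pi^{n+1}-\pi^n|_{\mathcal M(A)}\le 2|\theta^{n+1}|_2$, and both factors can essentially be attained under Assumption~\ref{as:bounded_phi}.

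The paper's proof reaches the stated bound only by writing $-\int_A A(s,a;\theta^{n+1})\pi^{n+1}(da|s)=-\int_A\bigl(Q(s,a;\theta^{n+1})+\tau\log\frac{d\pi^n}{d\mu}(s,a)\bigr)\pi^{n+1}(da|s)$. This drops the centring constant $\int_A Q(s,a;\theta^{n+1})\pi^n(da|s)+\tau\operatorname{KL}(\pi^n|\mu)(s)$ from Definition~\ref{def:approximate_advantage_def}. Restoring it yields precisely your $\lambda\tau\operatorname{KL}(\pi^n|\mu)$ term, together with the second copy of $|\theta^{n+1}|_2$. So you located the real difficulty, and the paper's argument does not overcome it either. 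What your computation does establish is
\[
(1-\lambda\tau)\operatorname{KL}(\pi^{n+1}|\pi^n)(s)\le 2\lambda|\theta^{n+1}|_2+\lambda\tau\bigl(\operatorname{KL}(\pi^n|\mu)-\operatorname{KL}(\pi^{n+1}|\mu)\bigr)(s)\le 2\lambda|\theta^{n+1}|_2+\lambda\tau\operatorname{KL}(\pi^n(\cdot|s)|\mu).
\]
Where the lemma is used (proof of Theorem~\ref{thm:single_loop_conv}), uniform bounds on $|\theta^n|_2$ and $\operatorname{KL}(\pi^n(\cdot|s)|\mu)$ are available. Under them this still gives $\sup_s\operatorname{KL}(\pi^{n+1}|\pi^n)(s)=O(\lambda)$, which is all that proof uses. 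The lemma should be restated in this corrected form rather than proved as written.
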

\begin{proof}
By definition, for each $s \in S$ the mirror descent updates can be applied pointwise through
\begin{equation}
\pi^{n+1}(\cdot|s) = \argmin_{m \in P(A)}\left\{\int_{A} A(s,a;\theta^{n+1})(m(da) - \pi^n(da|s)) + \frac{1}{\lambda}\operatorname{KL}(m|\pi^n(\cdot|s)) \right\}.
\end{equation} Therefore for all $s \in S$ it holds that
\begin{equation}
\int_{A} A(s,a;\theta^{n+1})\pi^{n+1}(da|s) + \frac{1}{\lambda}\operatorname{KL}(\pi^{n+1}(\cdot|s)|\pi^n(\cdot|s)) \leq 0.
\end{equation}Rearranging and using the definition of the approximate advantage function, for each $s \in S$ it holds that
\begin{align}
\frac{1}{\lambda}\operatorname{KL}(\pi^{n+1}(\cdot|s)|\pi^n(\cdot|s)) &\leq -\int_{A} A(s,a;\theta^{n+1})\pi^{n+1}(da|s) \\
&= - \int_{A} \left(Q(s,a;\theta^{n+1}) + \tau\log \frac{d\pi^n}{d\mu}(s,a) \right)\pi^{n+1}(da|s) \\
&\leq |\theta^{n+1}|_2 - \tau \int_{A} \log \left( \frac{d\pi^n}{d\pi^{n+1}}(s,a)\frac{d\pi^{n+1}}{d\mu}(s,a)\right)\pi^{n+1}(da|s) \\
&= |\theta^{n+1}|_2 - \tau \int_{A} \left(\log  \frac{d\pi^n}{d\pi^{n+1}}(s,a) + \log \frac{d\pi^{n+1}}{d\mu}(s,a)\right)\pi^{n+1}(da|s)\\
&= |\theta^{n+1}|_2 + \tau \operatorname{KL}(\pi^{n+1}(\cdot|s) |\pi^{n}(\cdot|s)) - \tau\operatorname{KL}(\pi^{n+1}(\cdot|s) |\mu) \\
&\leq |\theta^{n+1}|_2 + \tau \operatorname{KL}(\pi^{n+1}(\cdot|s) |\pi^{n}(\cdot|s)).
\end{align} where we used the non-negativity of KL divergence in the final inequality.
After rearranging we conclude the proof.
\end{proof}

We are now ready to prove Theorem \ref{thm:single_loop_conv}.
\begin{proof}
Theorem \ref{thm:main_up_to_errors} states that for all $n \in \mathbb{N}$ and $\rho \in \mathcal{P}(S)$, we have
\begin{align}\label{eq:main_thm_sub_in}
&\min_{0 \leq r \leq n-1} V^{\pi^r}_{\tau}(\rho) - V^{\pi^*}_{\tau}(\rho)\\
&\leq \frac{1}{\lambda(1-\gamma)n}\Bigg(\int_{S} \operatorname{KL}(\pi^* | \pi^{0})(s) d_{\rho}^{\pi^*}(ds) + \lambda\left(V^0(\rho) - V^*(\rho) \right) + \lambda c(\gamma)\sum_{k=0}^{n-1}|\theta^{k+1} - \theta_{\pi^k}|_2 \Bigg).
\end{align}
with $c(\gamma) = \max\left\{1,\frac{2}{1-\gamma} \right\}$. Then by definition of the temporal difference updates in Algorithm \ref{algo:single_loop_ac}, for all $n \in \mathbb{N}$ it holds that
\begin{align}
\left|\theta^{n+1} - \theta_{\pi^n}\right|_2^2 &= \left|\theta^n -hg(\theta^n,\pi^n) - \theta_{\pi^n} \right|_2^2 \\
&= |\theta^n - \theta_{\pi^n}|_2^2 +h^2 |g(\theta^n,\pi^n)|_2^2 -2h\inner{\theta^{n} - \theta_{\pi^n}}{g(\theta^n,\pi^n)} \\
&\leq |\theta^n - \theta_{\pi^n}|_2^2 +2h^2 (1+\gamma)|\theta^{n} - \theta_{\pi^n}|_{2}^2 -2h\inner{\theta^{n} - \theta_{\pi^n}}{g(\theta^n,\pi^n)} \\
&\leq |\theta^n - \theta_{\pi^n}|_2^2 +2h^2 (1+\gamma)|\theta^{n} - \theta_{\pi^n}|_{2}^2  \\ &\qquad -2h(1-\gamma)(1-\sqrt{\gamma})\inner{\nabla_{\theta}L(\theta^n,\pi^n)}{\theta^n - \theta_{\pi^n}} \\
&\leq  |\theta^n - \theta_{\pi^n}|_2^2 +2h^2 (1+\gamma)|\theta^{n} - \theta_{\pi^n}|_{2}^2  - 2h\Gamma|\theta^n - \theta_{\pi^n}|_2^2 \\
&= (1-h\Gamma)|\theta^n - \theta_{\pi^n}|_2^2 + (2(1+\gamma)h^2 - h\Gamma)|\theta^n - \theta_{\pi^n}|_2^2.
\end{align}where we used Lemma \ref{lemma: bound_g_squared} in the first inequality, Lemma \ref{lemma:gradient} in the second inequality and the strong convexity of $\theta \mapsto L(\theta,\pi;\beta)$ in the final inequality.  Now choosing $0 < h < \frac{\Gamma}{2(1+\gamma)}$ it holds that $2(1+\gamma)^2h^2 - h\Gamma \leq 0$ and thus we arrive at
\begin{equation}
\left|\theta^{n+1} - \theta_{\pi^n}\right|_2^2 \leq (1-h\Gamma) \left|\theta^{n} - \theta_{\pi^n}\right|_2^2.
\end{equation}Now we add and subtract $\theta_{\pi^{n-1}}$ and use Young's inequality with some $\epsilon > 0$ to arrive at
\begin{align}
\left|\theta^{n+1} - \theta_{\pi^n}\right|_2^2&\leq \left(1+ \epsilon\right)(1-h\Gamma)|\theta^n - \theta_{\pi^{n-1}}|_2^2 + \left(1+\frac{1}{\epsilon}\right)(1-h\Gamma)|\theta_{\pi^{n}} - \theta_{\pi^{n-1}}|_2^2.
\end{align} Choosing $\epsilon > 0$ such that $(1+\epsilon)(1-h\Gamma) = 1-\frac{h\Gamma}{2}$, we obtain $\epsilon = \frac{h\Gamma}{2(1-h\Gamma)}$ which also implies that $\left(1+\frac{1}{\epsilon}\right) = \frac{2}{h\Gamma} - 1 > 0$. Hence for all $n \in \mathbb{N}$ it holds that
\begin{align}\label{eq:before_subs_stepsizes}
\left|\theta^{n+1} - \theta_{\pi^n}\right|_2^2&\leq \left(1-\frac{h\Gamma}{2}\right)|\theta^n - \theta_{\pi^{n-1}}|_2^2 + \left(\frac{2}{h\Gamma}-1 \right)(1-h\Gamma)|\theta_{\pi^{n}} - \theta_{\pi^{n-1}}|_2^2 \\
&\leq \left(1-\frac{h\Gamma}{2}\right)|\theta^n - \theta_{\pi^{n-1}}|_2^2 + \frac{2}{h\Gamma}|\theta_{\pi^{n}} - \theta_{\pi^{n-1}}|_2^2.
\end{align} Now for each $n \in \mathbb{N}$, we seek to control the errors coming from the moving target $|\theta_{\pi^{n}} - \theta_{\pi^{n-1}}|_2^2$. To this end, Lemma \ref{thm:lipschitz_holder} shows that for some $\alpha_1,\alpha_2>0$ independent of $h,\lambda > 0$, for all $n \in \mathbb{N}$ it holds that
\begin{align}
\left|Q^{\pi^n}_{\tau} - Q^{\pi^{n-1}}_{\tau}\right|_{B_b(S \times A)}
&\leq \alpha_1 \sup_{s \in S} \operatorname{KL}(\pi^{n} \mid \pi^{n-1})(s)
+ \alpha_2 \sup_{s \in S} \operatorname{KL}(\pi^{n} \mid \pi^{n-1})(s)^2.
\end{align}
Then Lemma \ref{lemma:consecutive_KL_bounded} states that for each $s \in S$ and $n \in \mathbb{N}$ it holds that
\begin{equation}
\operatorname{KL}(\pi^{n}|\pi^{n-1})(s) \leq \frac{\lambda}{1-\tau\lambda}|\theta^{n-1}|_2 \leq \frac{\lambda}{1-\tau\lambda}R,
\end{equation}where used that the critic parameters are uniformly bounded by assumption. Finally observe that for each $n \in \mathbb{N}$ it holds that $|\theta_{\pi^{n}} - \theta_{\pi^{n-1}}|_2\leq \frac{1}{\lambda_{\beta}}\left|Q^{\pi^n}_{\tau} - Q^{\pi^{n-1}}_{\tau}\right|_{B_b(S \times A)}$ where $\lambda_{\beta} > 0$ by Assumption \ref{as:e_value}. 
Therefore, there exists $\alpha_3 > 0$ independent of $h,\lambda >0$ such that for all $n \in \mathbb{N}$ we have
\begin{align}\label{eq:lipschitz_upper_bound}
\left|\theta_{\pi^n} - \theta_{\pi^{n-1}}\right|_2^2
&\leq \alpha_3\left(\frac{\lambda}{1-\tau\lambda} + \frac{\lambda^2}{(1-\tau\lambda)^2} \right).
\end{align}
Substituting \eqref{eq:lipschitz_upper_bound} into \eqref{eq:before_subs_stepsizes} it holds that

\begin{align}
\left|\theta^{n+1} - \theta_{\pi^n}\right|_2^2
&\leq \left(1-\frac{h\Gamma}{2}\right)^n\left|\theta^{1} - \theta_{\pi^{0}}\right|_2^2
+ \frac{2\alpha_3}{h\Gamma}\left(\frac{\lambda}{1-\tau\lambda} + \frac{\lambda^2}{(1-\tau\lambda)^2}\right)
\sum_{k=0}^{n-1}\left(1-\frac{h\Gamma}{2}\right)^k \\
&= \left(1-\frac{h\Gamma}{2}\right)^n\left|\theta^{1} - \theta_{\pi^{0}}\right|_2^2
+ \frac{2\alpha_3}{h\Gamma}\left(\frac{\lambda}{1-\tau\lambda} + \frac{\lambda^2}{(1-\tau\lambda)^2}\right)
\frac{1-\left(1-\frac{h\Gamma}{2}\right)^n}{\frac{h\Gamma}{2}} \\
&\leq \left(1-\frac{h\Gamma}{2}\right)^n\left|\theta^{1} - \theta_{\pi^{0}}\right|_2^2
+ \frac{4\alpha_3}{h^2\Gamma^2}\left(\frac{\lambda}{1-\tau\lambda} + \frac{\lambda^2}{(1-\tau\lambda)^2}\right).
\end{align}
Taking the average over $0\leq k \leq n-1$, a direct calculation shows that

\begin{align}
\frac{1}{n}\sum_{k=0}^{n-1}\left|\theta^{k+1}-\theta_{\pi^k}\right|_2^2\leq \frac{2}{n h\Gamma}\left|\theta^{1}-\theta_{\pi^{0}}\right|_2^2
+ \frac{4\alpha_3}{h^2\Gamma^2}\left(\frac{\lambda}{1-\tau\lambda} + \frac{\lambda^2}{(1-\tau\lambda)^2}\right).
\end{align} By Hölder's inequality and the fact that $0 < \lambda\tau < 1$, it also holds that
\begin{align}
\frac{1}{n}\sum_{k=0}^{n}|\theta^{k+1} - \theta_{\pi^k}|_2 &\leq \left(\frac{1}{n}\sum_{k=0}^{n}|\theta^{k+1} - \theta_{\pi^k}|_2^2 \right)^{\frac{1}{2}} \\
&\leq \bigg(\frac{2}{n h\Gamma}\left|\theta^{1}-\theta_{\pi^{0}}\right|_2^2
+ \frac{4\alpha_3}{h^2\Gamma^2}\left(\frac{\lambda}{1-\tau\lambda} + \frac{\lambda^2}{(1-\tau\lambda)^2}\right) \bigg)^{\frac{1}{2}} \\
&\leq C \bigg(\frac{1}{nh} + \frac{\lambda^2}{h^2} \bigg)^{\frac{1}{2}} \\
&\leq C\left(\frac{1}{n^{\frac{1}{2}}h^{\frac{1}{2}}} + \frac{\lambda}{h}\right).
\end{align}with $C^2 := \max\left\{\frac{2}{\Gamma}\left|\theta^{1}-\theta_{\pi^{0}}\right|_2^2,\;
\frac{4\alpha_3}{\Gamma^2}\left(\frac{1}{\lambda(1-\tau\lambda)}+\frac{1}{(1-\tau\lambda)^2}\right)\right\}$. Substituting this into \eqref{eq:main_thm_sub_in}, we conclude with
\begin{align}\label{eq:main_thm_sub_in}
&\min_{0 \leq r \leq n-1} V^{\pi^r}_{\tau}(\rho) - V^{\pi^*}_{\tau}(\rho)\\
&\leq \frac{1}{\lambda(1-\gamma)n}\Bigg(\int_{S} \operatorname{KL}(\pi^* | \pi^{0})(s) d_{\rho}^{\pi^*}(ds) + \lambda\left(V^0(\rho) - V^*(\rho) \right) + \lambda c(\gamma)\left(\frac{C}{n^{\frac{1}{2}}h^{\frac{1}{2}}} + \frac{\lambda}{h} \right) \Bigg).
\end{align}

\end{proof}

\section{Proofs of Section \ref{sec:double_loop}}

\subsection{Proof of Lemma \ref{lemma: bound_g_squared}}\label{sec:proof_gradient_squared_bound}
\begin{proof}
Firstly observe by adding and subtracting $Q^{\pi}_{\tau}\in B_{b}(S \times A)$ and using that $\mathrm{T}^{\pi}_{\tau} Q^{\pi}_{\tau} = Q^{\pi}_{\tau}$ for all $\pi \in \Pi_{\mu}$ where $\mathrm{T}^{\pi}_{\tau}: B_{b}(S\times A) \to B_{b}(S\times A)$ is the Bellman operator defined in \eqref{eq:bellman_operator_Def}, we can express the semi-gradient as
\begin{align}
g(\theta,\pi) &= \int_{S \times A} \left(Q(s,a;\theta) -\mathrm{T}^{\pi}_{\tau}Q(s,a;\theta) \right) \phi(s,a)d_{\beta}^{\pi}(ds,da)\\
&= \int_{S \times A} \left(Q(s,a;\theta) -Q^{\pi}_{\tau}(s,a) -\left( \mathrm{T}^{\pi}_{\tau}Q(s,a;\theta) - \mathrm{T}^{\pi}_{\tau}Q^{\pi}_{\tau}(s,a)\right) \right) \phi(s,a)d_{\beta}^{\pi}(ds,da)
\end{align}for all $\theta \in \mathbb{R}^N$ and $\pi \in \Pi_{\mu}$. To ease notation, for each $s \in S$ and $a \in A$ we define $\varepsilon(s,a) = Q(s,a;\theta) - Q^{\pi}_{\tau}(s,a)$. Expanding the Bellman operator, a direction calculations shows that 
\begin{equation}
g(\theta,\pi) = \int_{S \times A} \left(\varepsilon(s,a) - \gamma\int_{S \times A} \varepsilon(s',a')P^{\pi}(ds',da'|s,a)\right)\phi(s,a) d_{\beta}^{\pi}(ds,da). 
\end{equation} Moreover, by using Young's and Hölder's inequalities along with Assumption \ref{as:bounded_phi}, it holds that
\begin{align}
\left| g(\theta,\pi) \right|_2^2 &\leq  \int_{S \times A} \left(\varepsilon(s,a) - \gamma \int_{S \times A} \varepsilon(s',a')P^{\pi}(ds',da'|s,a)\right)^2d_{\beta}^{\pi}(ds,da) \\
&\leq 2\int_{S \times A } \varepsilon(s,a)^2 d_{\beta}^{\pi}(da,ds) + 2\gamma^2 \int_{S \times A \times S \times A} \varepsilon(s',a')^2P^{\pi}(ds',da'|s,a)
d_{\beta}^{\pi}(ds,da)\\
&=2\int_{S \times A } \varepsilon(s,a)^2 d_{\beta}^{\pi}(da,ds) + 2\gamma^2 \int_{S \times A} \varepsilon(s',a')^2 d_{J_{\pi}\beta}^{\pi}(ds',da'),
\end{align}where $J_{\pi} : \mathcal{P}(S \times A) \to \mathcal{P}(S \times A)$ is the one step transition operator defined in \eqref{eq:transition_operator}. By Lemma 5.1 of \cite{zorba2025convergenceactorcriticentropyregularised}, it holds that $d_{J_{\pi}\beta}^{\pi}(E) \leq \frac{1}{\gamma} d_{\beta}^{\pi}(E)$ for all $E \in \mathcal{B}(S \times A).$ Therefore using the non-negativity of the integrand, for all $\theta \in \mathbb{R}^{N}$ and $\pi \in \Pi_{\mu}$ it holds that 
\begin{align}
\left| g(\theta,\pi) \right|_2^2 &\leq 2\int_{S \times A } \varepsilon(s,a)^2 d_{\beta}^{\pi}(da,ds) + 2\gamma \int_{S \times A} \varepsilon(s,a)^2 d^{\pi}_{\beta}(ds,da) \\
&= 2\left(1+ \gamma\right)\int_{S \times A } \varepsilon(s,a)^2 d_{\beta}^{\pi}(da,ds) \\
&\leq 2(1+\gamma)|\theta - \theta_{\pi}|_2^2,
\end{align}where the last inequality follows from the fact that for all $s \in S$ and $a \in A$ we have $\varepsilon^2(s,a) = \left(Q(s,a;\theta) - Q^{\pi}_{\tau}(s,a) \right)^2 = \left(\inner{\theta - \theta_{\pi}}{\phi(s,a)} \right)^2 \leq |\theta - \theta_{\pi}|_2^2 |\phi(s,a)|_2^2$ from Hölder's inequality and Assumption \ref{as:linearmdp} and \ref{as:bounded_phi}.

\end{proof}

\subsection{Proof of Theorem \ref{thm:inner_convergence}}\label{sec:proof_inner_conv}
\begin{proof}
Fix $n \in \mathbb{N}$ and let $M:= M(n)$ to ease notation. By definition of the critic updates in Algorithm \ref{algo:double_loop_ac}, for all $n \in \mathbb{N}$ it holds that
\begin{align}
|\theta^{n+1} - \theta_{\pi^n}|_2^2 & = |\theta^{n,M} - \theta_{\pi^n}|_2^2 \\
&=|\theta^{n,M-1} - h g(\theta^{n,M-1},\pi^n) - \theta_{\pi^n}|_2^2 \\
&=|\theta^{n,M-1} - \theta_{\pi^n}|_2^2 + h^2\left| g(\theta^{n,M-1},\pi^n) \right|_2^2 -2h \inner{\theta^{n,M-1} - \theta_{\pi^n}}{g(\theta^{n,M-1},\pi^n) }.
\end{align}
By Lemma \ref{lemma: bound_g_squared}, it holds that $\left|g(\theta,\pi) \right|_2^2 \leq 2(1+\gamma)\left| \theta - \theta_{\pi}\right|_2^2$ for all $\pi \in \Pi_{\mu}$ and $\theta \in \mathbb{R}^N$. Hence it holds that
\begin{align}
|\theta^{n+1} - \theta_{\pi^n}|_2^2 &\leq|\theta^{n,M-1} - \theta_{\pi^n}|_2^2 + 2h^2(1+\gamma)|\theta^{n,M-1} - \theta_{\pi^n}|_2^2 -2h \inner{\theta^{n,M-1} - \theta_{\pi^n}}{g(\theta^{n,M-1},\pi^n) }.
\end{align}
Using Lemma \ref{lemma:gradient} on the final term we arrive at
\begin{align}
|\theta^{n,M} - \theta_{\pi^n}|_2^2 &\leq |\theta^{n,M-1} - \theta_{\pi^n}|_2^2  + 2h^2(1+\gamma)|\theta^{n,M-1} - \theta_{\pi^n}|_2^2 \\
&\qquad- 2h(1-\gamma)(1-\sqrt{\gamma})\inner{\nabla_{\theta} L(\theta^{n,M-1},\pi^{n};\beta)}{\theta^{n,M-1} - \theta_{\pi^n}}.
\end{align}
We now proceed by using standard tools from convex analysis. That is, using the $\lambda_{\beta}$-strong convexity of $L(\cdot,\pi;\beta)$ and Assumption \ref{as:linearmdp}, we have
\begin{align} 
|\theta^{n,M} - \theta_{\pi^n}|_2^2 &\leq |\theta^{n,M-1} - \theta_{\pi^n}|_2^2  + 2h^2(1+\gamma)|\theta^{n,M-1} - \theta_{\pi^n}|_2^2 \\
&\qquad- 2h(1-\gamma)(1-\sqrt{\gamma})\inner{\nabla_{\theta} L(\theta^{n,M-1},\pi^{n};\beta)}{\theta^{n,M-1} - \theta_{\pi^n}} \\
&\leq |\theta^{n,M-1} - \theta_{\pi^n}|_2^2  + 2h^2(1+\gamma)|\theta^{n,M-1} - \theta_{\pi^n}|_2^2 \\
&\qquad- 2h(1-\gamma)(1-\sqrt{\gamma})\lambda_{\beta}|\theta^{n,M-1} - \theta_{\pi^n}|_2^2 \label{eq:before_strong_convexity}\\
&= \left(1 -2h\Gamma + 2(1+\gamma)h^2\right)|\theta^{n,M-1} - \theta_{\pi^n}|_2^2.
\end{align}
with $\Gamma = \lambda_{\beta}(1-\gamma)(1-\sqrt{\gamma})$. 
Choosing the critic step size $h$ such that 
\begin{equation}
0 < h < \min\left\{\frac{\Gamma}{2(1+\gamma)},\frac{1}{\Gamma}\right\},
\end{equation}
it holds that $2(1+\gamma)h^2 \leq h\Gamma$ and $h\Gamma \in (0,1)$, therefore
\begin{align}
|\theta^{n,M} - \theta_{\pi^n}|_2^2 &\leq (1 -h\Gamma)|\theta^{n,M-1} - \theta_{\pi^n}|_2^2.
\end{align}For each fixed $n \in \mathbb{N}$, iterating over the $M > 0$ critic steps yields
\begin{align}
|\theta^{n,M} - \theta_{\pi^n}|_2^2 &\leq (1-h\Gamma)^{M}|\theta^{n} - \theta_{\pi^n}|_2^2.
\end{align}
We conclude by using the standard identity $(1-x)^{M} \leq e^{-x M}$ for $x \in (0,1)$ and $M > 0$ to arrive at
\begin{equation}
|\theta^{n+1} - \theta_{\pi^n}|_2^2 \leq e^{-M h\Gamma}|\theta^{n} - \theta_{\pi^n}|_2^2.
\end{equation}

\end{proof}

\subsection{Proof of Lemma \ref{lemma:value_bound}}\label{sec:proof_value_bound}
\begin{proof}
By the performance difference lemma, for all $s \in S$ and $n \in \mathbb{N}$ it holds that
\begin{align}
V^{\pi^{n+1}}_{\tau}(s) - V^{\pi^{n}}_{\tau}(s)
&= \frac{1}{1-\gamma}\int_{S}\int_{A} \left(A^{\pi^n}_{\tau}(s',a)(\pi^{n+1} - \pi^n)(da|s') + \tau \operatorname{KL}(\pi^{n+1} | \pi^n)(s') \right)d_{s}^{\pi^{n+1}}(ds') \nonumber\\
&\leq \frac{1}{1-\gamma}\int_{S}\int_{A} \left(A^{\pi^n}_{\tau}(s',a)(\pi^{n+1} - \pi^n)(da|s') + \frac{1}{\lambda}\operatorname{KL}(\pi^{n+1} | \pi^n)(s') \right)d_{s}^{\pi^{n+1}}(ds') \nonumber\\
&= \frac{1}{1-\gamma}\int_{S}\int_{A} \left(A(s',a;\theta^{n+1})(\pi^{n+1} - \pi^n)(da|s') + \frac{1}{\lambda}\operatorname{KL}(\pi^{n+1} | \pi^n)(s')\right)d_{s}^{\pi^{n+1}}(ds')\\
&\qquad+ \frac{1}{1-\gamma}\int_{S}\int_{A}\left(A^{\pi^n}_{\tau}(s',a) - A(s',a;\theta^{n+1}) \right)(\pi^{n+1} - \pi^n)(da|s')d_{s}^{\pi^{n+1}}(ds')\\
&\leq \frac{1}{1-\gamma}\int_{S}\int_{A} \left(A(s',a;\theta^{n+1})(\pi^{n+1} - \pi^n)(da|s') + \frac{1}{\lambda}\operatorname{KL}(\pi^{n+1} | \pi^n)(s')\right)d_{s}^{\pi^{n+1}}(ds') \label{eq:from_performance_diff}\\
&\qquad+ \frac{2}{(1-\gamma)}|\theta^{n+1} - \theta_{\pi^n}|_{2}. \nonumber
\end{align}where we used that $\tau \leq \frac{1}{\lambda}$ in the first inequality, added and subtracted the approximate advantage function in the second equality and used the fact for all $s \in S$ and $a \in A$, by  Hölder's inequality, Assumption \ref{as:linearmdp} and \ref{as:bounded_phi}, it holds that $A(s,a;\theta^{n+1}) - A^{\pi^n}_{\tau}(s,a) = Q(s,a;\theta^{n+1}) - Q^{\pi^n}_{\tau}(s,a) \leq \left|\theta^{n+1} - \theta_{\pi^n}\right|_2 |\phi(s,a)|_2 \leq \left|\theta^{n+1} - \theta_{\pi^n}\right|_2$. Now recall the policy mirror descent update in Algorithm \ref{algo:double_loop_ac}
\begin{equation}\label{eq:mirror_descent_negative}
\pi^{n+1}(\cdot|s) = \argmin_{m \in P(A)}\left\{\int_{A} A(s,a;\theta^{n+1})(m(da) - \pi^n(da|s)) + \frac{1}{\lambda}\operatorname{KL}(m|\pi^n(\cdot|s)) \right\}.
\end{equation}
Since we obtain the minimum at $\pi^{n+1}$, for any $s \in S$ and $\pi(\cdot|s) \in \mathcal{P}(A)$ it holds that
\begin{equation}
\int_{A} A(s,a;\theta^{n+1})(\pi^{n+1} - \pi^n)(da|s) + \frac{1}{\lambda}\operatorname{KL}(\pi^{n+1}|\pi^{n})(s) \leq     \int_{A} A(s,a;\theta^{n+1})(\pi - \pi^n)(da|s) + \frac{1}{\lambda}\operatorname{KL}(\pi|\pi^{n})(s).
\end{equation}Choosing $\pi = \pi^{n+1}$ yields
\begin{equation}
\int_{A} A(s,a;\theta^{n+1})(\pi^{n+1} - \pi^n)(da|s) + \frac{1}{\lambda}\operatorname{KL}(\pi^{n+1}|\pi^{n})(s) \leq0 .
\end{equation}
Substituting this into \eqref{eq:from_performance_diff} and using the definition of $\pi^* \in \Pi_{\mu}$, it hence holds that
\begin{equation}\label{eq:approximate_decreasing_one_step}
V^{\pi^*}_{\tau}(s)\leq V^{\pi^{n+1}}_{\tau}(s) \leq V^{\pi^{n}}_{\tau}(s) + \frac{2}{1-\gamma}|\theta^{n+1} - \theta_{\pi^n}|_{2}
\end{equation}
for all $s \in S$. Finally, applying Lemma \ref{thm:inner_convergence} we arrive at 
\begin{equation}
V^{\pi^*}_{\tau}(s)\leq V^{\pi^{n+1}}_{\tau}(s) \leq V^{\pi^{n}}_{\tau}(s) + \frac{2e^{-\frac{M(n)h\Gamma}{2}}}{1-\gamma}|\theta^{n} - \theta_{\pi^n}|_{2},
\end{equation}which concludes the proof.

\end{proof}

\subsection{Proof of Theorem \ref{thm:bounded_under_log_growth}}\label{sec:proof_bounded_under_log_growth}
\begin{proof}
Recall that for each $n \in \mathbb{N}$ and $s \in S$, $a \in A$, the normalised log densities are defined as $l_n(s,a) = \ln \frac{d\pi^n}{d\mu}(s,a) - \int_{A} \ln \frac{d\pi^n}{d\mu}(s,a')\mu(da')$. Then by Lemma \ref{lemma:log_recursion}, for all $n \in \mathbb{N}$ it holds that
\begin{align}
|l_{n+1}|_{B_b(S\times A)} &\leq (1-\tau\lambda)|l_{n}|_{B_b(S\times A)} +  2\lambda \left| Q(\cdot,\cdot;\theta^{n+1})\right|_{B_b(S\times A)} \\
&\leq(1-\tau\lambda)|l_{n}|_{B_b(S\times A)}+  2\lambda \left(\left| Q^{\pi^n}_{\tau}\right|_{B_b(S\times A)} + \left|Q(\cdot,\cdot;\theta^{n+1}) -Q^{\pi^n}_{\tau}\right|_{B_b(S \times A)} \right) \\
&\leq (1-\tau\lambda)|l_{n}|_{B_b(S\times A)}+  2\lambda \left(\left| Q^{\pi^n}_{\tau}\right|_{B_b(S\times A)} + |\theta^{n+1} - \theta_{\pi^n}|_2\right) \\
&\leq (1-\tau\lambda)\mathrm{L}_n+  2\lambda \left(\left| Q^{\pi^n}_{\tau}\right|_{B_b(S\times A)} + e^{-\frac{M(n)h\Gamma}{2}}|\theta^{n} - \theta_{\pi^n}|_2\right). \label{eq:recursion_2}
\end{align} 
where we added and subtracted $Q_{\tau}^{\pi^n}$ in the second inequality and used Hölder's inequality along with Assumption \ref{as:bounded_phi} in the third inequality. In the final inequality we used Theorem \ref{thm:inner_convergence} and used that by definition $|l_n|_{B_b(S\times A)} \leq \sup_{0\leq r\leq n} |l_r|_{B_b(S\times A)} :=\mathrm{L}_n$. Now to upper bound $Q^{\pi^n}_{\tau} \in B_b(S \times A)$, by definition it holds that
\begin{align}
Q^{\pi^n}_{\tau}(s,a) = c(s,a) + \gamma \int_{S} V^{\pi^n}_{\tau}(s')P(ds'|s,a),
\end{align} for all $s \in S$ and $a \in A$. Taking the $| \cdot|_{B_b(S\times A)}$ and rearranging, it holds that
\begin{align}\label{eq:Q_function_bound}
\left| Q^{\pi^n}_{\tau}\right|_{B_b(S\times A)} &\leq |c|_{B_b(S\times A)} + \gamma \left| V^{\pi^n}_{\tau}\right|_{B_b(S)}.
\end{align}
Now by Lemma \ref{lemma:value_bound}, for all $n \in \mathbb{N}$ we have
\begin{equation}\label{eq:to_iterate}
V^{\pi^*}_{\tau}(s)\leq V^{\pi^{n+1}}_{\tau}(s) \leq V^{\pi^{n}}_{\tau}(s) + \frac{2e^{-\frac{M(n)h\Gamma}{2}}}{1-\gamma}|\theta^{n} - \theta_{\pi^n}|_{2}.
\end{equation}
Recursively applying the upper bound \eqref{eq:to_iterate}, for all $s \in S$ and $n \in \mathbb{N}$ it holds that
\begin{equation}
V^{\pi^*}_{\tau}(s) \leq V^{\pi^n}_{\tau}(s) \leq V^{\pi^0}_{\tau}(s) + \frac{2}{1-\gamma}\sum_{k=0}^{n-1}e^{-\frac{M(k)h\Gamma}{2}}|\theta^{k} - \theta_{\pi^{k}}|_2.
\end{equation}
Taking the $|\cdot|_{B_b(S)}$ norm on both sides it also holds that
\begin{align}
\left| V^{\pi^n}_{\tau} \right|_{B_b(S)} &\leq \max\left\{\left|V^{\pi^0}_{\tau}\right|_{B_b(S)} + \frac{2}{1-\gamma}\sum_{k=0}^{n-1}e^{-\frac{M(k)h\Gamma}{2}}|\theta^{k} - \theta_{\pi^{k}}|_2,  \left| V^{\pi^*}_{\tau} \right|_{B_b(S)}\right\} \\
&\leq \left|V^{\pi^0}_{\tau}\right|_{B_b(S)} + \left| V^{\pi^*}_{\tau} \right|_{B_b(S)}  + \frac{2}{1-\gamma}\sum_{k=0}^{n-1}e^{-\frac{M(k)h\Gamma}{2}}|\theta^{k} - \theta_{\pi^{k}}|_2.
\end{align}
Substituting this into \eqref{eq:Q_function_bound}, for all $n\in \mathbb{N}$ it holds that
\begin{align}
\left| Q^{\pi^n}_{\tau}\right|_{B_b(S\times A)} &\leq |c|_{B_b(S\times A)} + \gamma \left| V^{\pi^n}_{\tau}\right|_{B_b(S)} \\
&\leq |c|_{B_b(S\times A)} +\label{eq:Q_bound}\gamma\left|V^{\pi^0}_{\tau}\right|_{B_b(S)} + \gamma \left| V^{\pi^*}_{\tau} \right|_{B_b(S)} + \frac{2\gamma}{1-\gamma} \sum_{k=0}^{n-1}e^{-\frac{M(k)h \Gamma}{2}}|\theta^{k} - \theta_{\pi^{k}}|_2. 
\end{align}
Let $\alpha_1 := |c|_{B_b(S\times A)} + \gamma\left|V^{\pi^0}_{\tau}\right|_{B_b(S)} + \gamma \left| V^{\pi^*}_{\tau} \right|_{B_b(S)}$ and $c(\gamma) = \max\left\{1,\frac{2\gamma}{1-\gamma} \right\}$. Substituting \eqref{eq:Q_bound} into \eqref{eq:recursion_2}, we have
\begin{align}\label{eq:recursion_almost_there}
|l_{n+1}|_{B_b(S\times A)} &\leq (1-\tau\lambda)\mathrm{L}_n + 2\lambda \left( c(\gamma)\sum_{k=0}^{n}e^{-\frac{M(k)h\Gamma}{2}}|\theta^{k} - \theta_{\pi^{k}}|_2 +  \alpha_1\right).
\end{align} 

By the triangle inequality it holds that $|\theta^{k} - \theta_{\pi^{k}}|_2 \leq |\theta^k|_2 + |\theta_{\pi^k}|_2$ for each $k \in \mathbb{N}$.
Then by Lemma \ref{lemma:theta_recursion_1} for all $k\in \mathbb{N}$ it holds that
\begin{align}
|\theta^{k}|_2^2 
&\leq |\theta_0|_2^2+ \frac{\tau^2\gamma^2\left(3h + \frac{2}{\Gamma}\right)}{\Gamma - 3h(1+\gamma)^2}\sup_{0\leq r \leq k}\mathrm{K}_{r}^2 + \frac{|c|_{B_b(S\times A)}^2\left(3h + \frac{2}{\Gamma}\right)}{\Gamma - 3h(1+\gamma)^2} \\
&\leq |\theta_0|_2^2+ \frac{4\tau^2\gamma^2\left(3h + \frac{2}{\Gamma}\right)}{\Gamma - 3h(1+\gamma)^2}\mathrm{L}_{k}^2 + \frac{|c|_{B_b(S\times A)}^2\left(3h + \frac{2}{\Gamma}\right)}{\Gamma - 3h(1+\gamma)^2} \\
&\leq \label{eq:theta_bound_in_proof}|\theta_0|_2^2+ \frac{4\tau^2\gamma^2\left(3h + \frac{2}{\Gamma}\right)}{\Gamma - 3h(1+\gamma)^2}\mathrm{L}_{n}^2 + \frac{|c|_{B_b(S\times A)}^2\left(3h + \frac{2}{\Gamma}\right)}{\Gamma - 3h(1+\gamma)^2},
\end{align}
where we used Lemma \ref{lemma:technical_KL_ln} in the second inequality and the fact that $\mathrm{L}_k \leq \mathrm{L}_n$ for all $k\leq n$. Similarly by Lemma \ref{lemma:everyhing_bdd_if_KL_bdd} it holds that
\begin{align}
|\theta_{\pi^k}|_2 
&\leq \frac{|c|_{B_b(S\times A)}}{(1-\gamma)\lambda_{\beta}} + \frac{2\tau\gamma}{(1-\gamma)\lambda_{\beta}}\mathrm{L}_k \\
&\leq \label{theta_pi_bound}\frac{|c|_{B_b(S\times A)}}{(1-\gamma)\lambda_{\beta}} + \frac{2\tau\gamma}{(1-\gamma)\lambda_{\beta}}\mathrm{L}_n,
\end{align} 
where we again used that $\mathrm{L}_k\leq \mathrm{L}_n$ for $k\leq n$. Therefore for all $k \in \left\{ 0, \ldots, n \right\}$ it holds that
\begin{align}\label{eq:after_triangle}
|\theta^{k} - \theta_{\pi^{k}}|_2
&\leq  (\delta_1 + \delta_2\,\mathrm{L}_n),
\end{align}
with
\begin{equation}
\delta_1
= \left(|\theta_0|_2^2 + \frac{|c|_{B_b(S\times A)}^2\left(3h + \frac{2}{\Gamma}\right)}{\Gamma - 3h(1+\gamma)^2}\right)^{\frac{1}{2}} + \frac{|c|_{B_b(S\times A)}}{(1-\gamma)\lambda_{\beta}}, \quad     \delta_2
= 2\tau\gamma\left(\frac{\left(3h + \frac{2}{\Gamma}\right)}{\Gamma - 3h(1+\gamma)^2}\right)^{\frac{1}{2}} + \frac{2\tau\gamma}{(1-\gamma)\lambda_{\beta}}.
\end{equation}

Hence substituting \eqref{eq:after_triangle} into \eqref{eq:recursion_almost_there}, we arrive at
\begin{align}\label{eq:recursion_simplified}
|l_{n+1}|_{B_b(S\times A)} &\leq (1-\tau\lambda)\mathrm{L}_n + 2\lambda \left( c(\gamma)\sum_{k=0}^{n}e^{-\frac{M(k)h\Gamma}{2}}|\theta^{k} - \theta_{\pi^{k}}|_2 +  \alpha_1\right) \\
&\leq \beta_1(n) \mathrm{L}_n + \beta_2(n),
\end{align}
where
\begin{align}
\beta_1(n) := (1-\tau\lambda) + 2\lambda c(\gamma)\delta_2\sum_{k=0}^{n}e^{-\frac{M(k) h \Gamma}{2}},
\end{align}
\begin{align}
\beta_2(n) := 2\lambda c(\gamma)\delta_1\sum_{k=0}^{n}e^{-\frac{M(k) h \Gamma}{2}} + 2\lambda\alpha_1.
\end{align}
For all $n \in \mathbb{N}$ we choose $M(n)$ large enough such that
\begin{align}\label{eq:beta_target}
\beta_1(n) \leq 1-\frac{\tau\lambda}{2}.
\end{align}
A direct calculation shows that this is satisfied when
\begin{align}
\sum_{k=0}^{n}e^{-\frac{M(k) h \Gamma}{2}} \leq \frac{\tau}{4c(\gamma)\delta_2}.
\end{align}Therefore for all $n \in \mathbb{N}$ let $M(n) \geq \frac{4}{h\Gamma}\log(c(n+1))$ for some $c > 0$. This implies that for all $n \in \mathbb{N}$
\begin{equation}
\sum_{k=0}^{n}e^{-\frac{M(k) h \Gamma}{2}} \leq \frac{1}{c^2}\sum_{k=0}^{n} \frac{1}{(k+1)^2} \leq \frac{2}{c^2} \leq \frac{\tau}{4c(\gamma)\delta_2},
\end{equation} which yields $c^2 \geq \frac{8c(\gamma)\delta_2}{\tau}$. Therefore with choice of $M(n)$ for all $n \in \mathbb{N}$, \eqref{eq:recursion_simplified} becomes
\begin{align}\label{eq:l_bound_final}
|l_{n+1}|_{B_b(S\times A)} &\leq \beta_1(n) \mathrm{L}_n + \beta_2(n)\\
&\leq \left(1-\frac{\tau\lambda}{2}\right)\mathrm{L}_n + \alpha_3.
\end{align}
A direct application of Lemma \ref{lemma:sup_lemma} yields that for all $n \in \mathbb{N}$
\begin{align}
\mathrm{L}_n &\leq \mathrm{L}_0 + \frac{\alpha_3}{1-\left(1-\frac{\tau\lambda}{2}\right)} \\
&= \mathrm{L}_0 + \frac{2\alpha_3}{\tau\lambda}.
\end{align}Finally, after applying Lemma \ref{lemma:technical_KL_ln} and Lemma \ref{lemma:everyhing_bdd_if_KL_bdd}, we conclude the proof.
\end{proof}

\section{Proofs of Section \ref{sec:convergence_double_loop}}

\subsection{Proof of Theorem \ref{thm:linear_convergence_log_growth}}\label{sec:proof_linear_conv_log_growth}
\begin{proof}
Theorem \ref{thm:main_up_to_errors} states that for all $n \in \mathbb{N}$ and $\rho \in \mathcal{P}(S)$, we have
\begin{align}\label{eq:main_thm_sub_in}
&\min_{0 \leq r \leq n-1} V^{\pi^r}_{\tau}(\rho) - V^{\pi^*}_{\tau}(\rho)\\
&\leq \frac{1}{\lambda(1-\gamma)n}\Bigg(\int_{S} \operatorname{KL}(\pi^* | \pi^{0})(s) d_{\rho}^{\pi^*}(ds) + \lambda\left(V^0(\rho) - V^*(\rho) \right) + \lambda c(\gamma)\sum_{k=0}^{n-1}|\theta^{k+1} - \theta_{\pi^k}|_2 \Bigg).
\end{align}

with $c(\gamma) = \max\left\{1,\frac{2}{1-\gamma} \right\}$. We are thus left with controlling the average of the critic errors.
To that end, by Theorem \ref{thm:inner_convergence}, for any $n \in \mathbb{N}$ it holds that
\begin{equation}
\bigl|\theta^{n+1} - \theta_{\pi^n}\bigr|_2
\leq e^{-\frac{M(n)h\Gamma}{2}}\bigl|\theta^{n} - \theta_{\pi^n}\bigr|_2.
\end{equation}
Now by assumption we have that for each $n \in \mathbb{N}$, $M := M(n) \geq \frac{4}{h\Gamma}\log\!\big(c(n+1)\big)$ with $c>0$ the constant from Theorem \ref{thm:bounded_under_log_growth}. Thus, by Theorem
\ref{thm:bounded_under_log_growth} and Lemma \ref{lemma:everyhing_bdd_if_KL_bdd}, there exists $R>0$ such that
$\bigl|\theta^{k} - \theta_{\pi^k}\bigr|_2 \leq R$ for all $k \geq 0$. Therefore for any $n \in \mathbb{N}$ we have
\begin{align}
\bigl|\theta^{n+1} - \theta_{\pi^n}\bigr|_2
&\leq e^{-\frac{M(n)h\Gamma}{2}}\bigl|\theta^{n} - \theta_{\pi^n}\bigr|_2
\leq R e^{-\frac{M(n)h\Gamma}{2}}.
\end{align}
For any $n \in \mathbb{N}$, this implies that
\begin{align}
\sum_{k=0}^{n-1}\bigl|\theta^{k+1} - \theta_{\pi^k}\bigr|_2
&\leq R \sum_{k=0}^{n-1} \frac{1}{c^2(k+1)^2} = \frac{R}{c^2} \sum_{j=1}^{n} \frac{1}{j^2}.
\end{align}Now using the classical result $\sum_{j=0}^{\infty} \frac{1}{j^2} = \frac{\pi^2}{6} \leq 2$, we obtain 
\begin{equation}
\sum_{k=0}^{n-1}\bigl|\theta^{k+1} - \theta_{\pi^k}\bigr|_2 \leq \frac{2R^2}{c^2}.
\end{equation}
Substituting this into \eqref{eq:main_thm_sub_in}, we conclude the proof.
\end{proof}

\subsection{Proof of Theorem \ref{thm:exponential_conv}}\label{sec:proof_exp_conv}
\begin{proof}
To ease notation let $V^{n} := V^{\pi^n}_{\tau}$ for $n \in \mathbb{N}$ and let $V^* := V^{\pi^*}_{\tau}$. Define $\xi = \frac{1}{1-\gamma}\left|\frac{\mathrm{d} d_{\rho}^{\pi^*}}{\mathrm{d}\rho} \right|_{B_b(S)}$ and note that $\frac{\mathrm{d} d_{\rho}^{\pi}}{\mathrm{d}\rho}(s) \geq (1-\gamma)$ for any $s \in S$ and $\pi \in \Pi_{\mu}$. Hence it holds that 
\begin{equation}
\frac{\mathrm{d} d_{\rho}^{\pi^*}}{\mathrm{d}\rho}(s) = \frac{\mathrm{d} d_{\rho}^{\pi^*}}{\mathrm{d} d_{\rho}^{\pi}}(s)\frac{\mathrm{d} d_{\rho}^{\pi}}{\mathrm{d}\rho}(s) \geq (1-\gamma)\frac{\mathrm{d} d_{\rho}^{\pi^*}}{\mathrm{d} d_{\rho}^{\pi}}(s).
\end{equation}
for any $\pi \in \Pi_{\mu}$ and $s \in S$. Thus we have $\nu^n := \left|\frac{\mathrm{d} d_{\rho}^{\pi^*}}{\mathrm{d} d_{\rho}^{\pi^n}} \right|_{B_b(S)} \leq \xi$ for all $n \in \mathbb{N}$. 
Moreover, recall that by \eqref{eq:mirror_descent_negative} for all $s \in S$ it holds that
\begin{equation}
\int_{A} A(s,a;\theta^{n+1})(\pi^{n+1} - \pi^n)(da|s) + \frac{1}{\lambda}\operatorname{KL}(\pi^{n+1}(\cdot|s)|\pi^{n}(\cdot|s)) \leq 0.
\end{equation}Therefore, it holds that
\begin{align}
&\int_{S} \left(\int_{A} A(s,a;\theta^{n+1})(\pi^{n+1}-\pi^{n})(da|s) + \frac{1}{\lambda}\operatorname{KL}(\pi^{n+1}|\pi^n)(s) \right) d_{\rho}^{\pi^*}(ds) \nonumber\\
&= \int_{S} \left(\int_{A} A(s,a;\theta^{n+1})(\pi^{n+1}-\pi^{n})(da|s) + \frac{1}{\lambda}\operatorname{KL}(\pi^{n+1}|\pi^n)(s) \right) \frac{\mathrm{d} d_{\rho}^{\pi^*}}{\mathrm{d} d_{\rho}^{\pi^{n+1}}}(s) d_{\rho}^{\pi^{n+1}}(ds) \nonumber\\
&\geq \left|\frac{\mathrm{d} d_{\rho}^{\pi^*}}{\mathrm{d} d_{\rho}^{\pi^{n+1}}}\right|_{B_b(S)} 
\int_{S} \left(\int_{A} A(s,a;\theta^{n+1})(\pi^{n+1}-\pi^{n})(da|s) + \frac{1}{\lambda}\operatorname{KL}(\pi^{n+1}|\pi^n)(s) \right)d_{\rho}^{\pi^{n+1}}(ds) \nonumber\\
&\geq \nu^{n+1}\int_{S} \left(\int_{A} A(s,a;\theta^{n+1})(\pi^{n+1}-\pi^{n})(da|s) + \tau\operatorname{KL}(\pi^{n+1}|\pi^n)(s) \right)d_{\rho}^{\pi^{n+1}}(ds) \nonumber\\
&= \nu^{n+1}\int_{S} \left(\int_{A} A^{\pi^n}_{\tau}(s,a)(\pi^{n+1}-\pi^{n})(da|s) + \tau\operatorname{KL}(\pi^{n+1}|\pi^n)(s) \right)d_{\rho}^{\pi^{n+1}}(ds) \nonumber\\
&\qquad + \nu^{n+1}\int_{S} \int_{A} \left(A(s,a;\theta^{n+1}) - A^{\pi^n}_{\tau}(s,a)\right)(\pi^{n+1} - \pi^{n})(da|s) d_{\rho}^{\pi^{n+1}}(ds) \nonumber\\
&=\nu^{n+1}(1-\gamma)\left(V^{n+1}(\rho) - V^{n}(\rho) \right) \nonumber\\
&\qquad+ \nu^{n+1}\int_{S} \int_{A} \left(A(s,a;\theta^{n+1}) - A^{\pi^n}_{\tau}(s,a)\right)(\pi^{n+1} - \pi^{n})(da|s) d_{\rho}^{\pi^{n+1}}(ds),
\end{align}
where the first inequality is due to the integrand in the $s \in S$ variable being non-positive, the second inequality is due to $0 \leq \tau \lambda \leq 1$ and the final equality is then an application of performance difference. Now identically to \eqref{eq:carry_on} from the proof of Theorem \ref{thm:main_up_to_errors}, after an application of the Three Point Lemma, for all $n \in \mathbb{N}$ it holds that
\begin{align}\label{eq:from_before}
&\frac{1}{\lambda}\left(\operatorname{KL}(\pi^*|\pi^{n+1})\left(d_{\rho}^{\pi^*}\right) -  \operatorname{KL}(\pi^*|\pi^n)\left(d_{\rho}^{\pi^*}\right)\right) \nonumber\\
&\leq \int_{S}\int_{A} A(s,a;\theta^{n+1})(\pi^* - \pi^{n})(da|s)d_{\rho}^{\pi^{*}}(ds) \nonumber\\
&\qquad- \left(\int_{S}\int_{A} A(s,a;\theta^{n+1})(\pi^{n+1} - \pi^{n})(da|s)d_{\rho}^{\pi^{*}}(ds) +  \tau \operatorname{KL}(\pi^{n+1}|\pi^n)\left(d_{\rho}^{\pi^{*}}\right)\right).
\end{align}
To upper bound the first term on the right hand side, observe that
\begin{align}
&\int_{S \times A} A(s,a;\theta^{n+1})(\pi^{*} - \pi^{n})(da|s)d_{\rho}^{\pi^*}(ds) \\
&= \int_{S \times A} A^{\pi^n}_{\tau}(s,a)(\pi^{*} - \pi^{n})(da|s)d_{\rho}^{\pi^*}(ds) + \int_{S \times A}\left(A(s,a;\theta^{n+1}) - A^{\pi^n}_{\tau}(s,a) \right) (\pi^{*} - \pi^{n})(da|s)d_{\rho}^{\pi^*}(ds) \nonumber\\
&=(1-\gamma)(V^*(\rho) - V^n(\rho)) - \tau \operatorname{KL}(\pi^*|\pi^n)\left(d_{\rho}^{\pi^*}\right)+ \int_{S \times A}\left(A(s,a;\theta^{n+1}) - A^{\pi^n}_{\tau}(s,a) \right) (\pi^{*} - \pi^{n})(da|s)d_{\rho}^{\pi^*}(ds) 
\end{align}
where we added and subtract the true advantage function and applied the performance difference lemma on the first term in the second equality.
Now observe that 
\begin{align}
&\int_{S \times A} A(s,a;\theta^{n+1})(\pi^{*} - \pi^{n})(da|s)d_{\rho}^{\pi^*}(ds) \\
&= (1-\gamma)(V^*(\rho) - V^n(\rho)) - \tau \operatorname{KL}(\pi^*|\pi^n)\left(d_{\rho}^{\pi^*}\right)+ \int_{S \times A}\left(A(s,a;\theta^{n+1}) - A^{\pi^n}_{\tau}(s,a) \right) (\pi^{*} - \pi^{n})(da|s)\frac{\mathrm{d}d_{\rho}^{\pi^*}}{\mathrm{d}d_{\rho}^{\pi^{n+1}}}(s)d_{\rho}^{\pi^{n+1}}(ds)\\
&\leq (1-\gamma)(V^*(\rho) - V^n(\rho)) - \tau \operatorname{KL}(\pi^*|\pi^n)\left(d_{\rho}^{\pi^*}\right)+2\left|\frac{\mathrm{d} d_{\rho}^{\pi^*}}{\mathrm{d} d_{\rho}^{\pi^{n+1}}}\right|_{B_b(S)}|\theta^{n+1} - \theta_{\pi^n}|_2 \nonumber\\
&=(1-\gamma)(V^*(\rho) - V^n(\rho)) - \tau \operatorname{KL}(\pi^*|\pi^n)\left(d_{\rho}^{\pi^*}\right)+2\nu^{n+1}|\theta^{n+1} - \theta_{\pi^n}|_2,
\end{align}
where we performed a change of measure on the second term in the first equality and used Hölder's inequality along with Assumption \ref{as:bounded_phi} in the final inequality. 
Substituting this into \eqref{eq:from_before}, for all $n \in \mathbb{N}$ it holds that
\begin{align}
&\frac{1}{\lambda}\left(\operatorname{KL}(\pi^*|\pi^{n+1})\left(d_{\rho}^{\pi^*}\right) -  \operatorname{KL}(\pi^*|\pi^n)\left(d_{\rho}^{\pi^*}\right)\right) \nonumber\\
&\leq (1-\gamma)(V^*(\rho) - V^n(\rho))
+ (1-\gamma)\nu^{n+1}(V^{n+1}(\rho) - V^{n}(\rho)) -\tau \operatorname{KL}(\pi^*|\pi^n)\left(d_{\rho}^{\pi^*}\right)\nonumber\\
&\qquad+ 4\nu^{n+1}|\theta^{n+1} - \theta_{\pi^n}|_2.
\end{align}
Now let $y^n = \operatorname{KL}(\pi^*|\pi^n)\left(d_{\rho}^{\pi^*}\right)$ and $\delta^n = V^n(\rho) - V^*(\rho)$. Rearranging, a direct calculations show that for any $n \in \mathbb{N}$
\begin{align}
\delta^{n} - \nu^{n+1}\left(\delta^{n+1} - \delta^n - \frac{4}{1-\gamma}|\theta^{n+1} - \theta_{\pi^n}|_2 \right) &\leq  \frac{1}{\lambda(1-\gamma)}\left((1-\tau\lambda)y^n - y^{n+1} \right).
\end{align}
Now recall that by Lemma \ref{lemma:value_bound}, for all $n \in \mathbb{N}$ it holds that
\begin{equation}
\delta^{n+1} - \delta^{n} = V^{n+1}(\rho) - V^{n}(\rho) \leq \frac{2}{1-\gamma}|\theta^{n+1} - \theta_{\pi^n}|.
\end{equation}Therefore it also holds that $\delta^{n+1} - \delta^n - \frac{4}{1-\gamma}|\theta^{n+1} - \theta_{\pi^n}|_2 \leq 0$. Hence using that $\nu^{n} \leq \xi$ for all $n \in \mathbb{N}$, we have
\begin{align}
&\delta^{n} + \xi\left(\delta^{n+1} - \delta^n - \frac{4}{1-\gamma}|\theta^{n+1} - \theta_{\pi^n}|_2 \right)\leq  \frac{1}{\lambda(1-\gamma)}\left((1-\tau\lambda)y^n - y^{n+1} \right).
\end{align}Thus dividing through by $\xi > 0$ and rearranging again, for all $n \in \mathbb{N}$ it holds that
\begin{align}\label{eq:simplify}
\delta^{n+1} + \frac{1}{\lambda(1-\gamma)\xi}y^{n+1} \leq \frac{\xi - 1}{\xi}\left(\delta^n + \frac{1-\tau\lambda}{\lambda(1-\gamma)(\xi-1)}y^n \right) + \frac{4}{1-\gamma}|\theta^{n+1} - \theta_{\pi^n}|_2.
\end{align}Moreover, by assumption we have that $\xi > 1$ and $\frac{1}{\lambda} < \tau \xi$ which in turn implies that $\frac{1-\tau\lambda}{\lambda(1-\gamma)(\xi-1)} \leq \frac{1}{\lambda(1-\gamma)\xi}$. 
Therefore after simplifying \eqref{eq:simplify}, for all $n \in \mathbb{N}$ it holds that
\begin{align}
\delta^{n+1} + \frac{1}{\lambda(1-\gamma)\xi}y^{n+1} \leq \frac{\xi - 1}{\xi}\left(\delta^n + \frac{1}{\lambda(1-\gamma)\xi}y^n \right) + \frac{4}{1-\gamma}|\theta^{n+1} - \theta_{\pi^n}|_2.
\end{align}Iterating this inequality, we have
\begin{align}
\delta^{n+1} + \frac{1}{\lambda(1-\gamma)\xi}y^{n+1} 
&\leq \left(\frac{\xi - 1}{\xi}\right)^{n+1}\left(\delta^0 + \frac{1}{\lambda(1-\gamma)\xi}y^0 \right) \\
&\quad + \frac{4}{1-\gamma}\sum_{k=0}^{n}\left(\frac{\xi - 1}{\xi}\right)^{n-k}| \theta^{k+1} - \theta_{\pi^{k}}|_2 \\
&\leq \label{eq:exp_conv_errors} e^{-\frac{n+1}{\xi}}\left(\delta^0 + \frac{1}{\lambda(1-\gamma)\xi}y^0 \right) + \frac{4}{1-\gamma}\sum_{k=0}^{n}\left(\frac{\xi - 1}{\xi}\right)^{n-k}| \theta^{k+1} - \theta_{\pi^{k}}|_2,
\end{align} 

where we used the identity $(1-x)^{M} \leq e^{-xM}$ for any $x \in (0,1)$ in the final inequality.
To ease notation, let $\kappa = \frac{\xi -1}{\xi} \in (0,1)$.
Focusing on the error term on the right hand side, by assumption we have that for each policy update $n \in \mathbb{N}$, the number of temporal difference steps satsifies $M:=M(n) \geq \frac{4c}{h\Gamma}(n+1)$ with $c > 0$ the constant from Theorem \ref{thm:bounded_under_log_growth}. 
Therefore by Theorem \ref{thm:bounded_under_log_growth} and Lemma \ref{lemma:everyhing_bdd_if_KL_bdd}, there exists $R > 0$ such that $|\theta^{k} - \theta_{\pi^{k}}|_2 \leq R$ for all $k \in \mathbb{N}$. 
Hence for all $n \in \mathbb{N}$ we have
\begin{align}
\sum_{k=0}^{n}\kappa^{n-k}|\theta^{k+1} - \theta_{\pi^{k}}|_2 &\leq R\sum_{k=0}^{n}\kappa^{n-k}e^{-\frac{M(k)h\Gamma}{2}} \leq \label{eq:reindex}R\sum_{k=0}^{n}\kappa^{n-k}e^{-2c(k+1)}.
\end{align}Simplifying \eqref{eq:reindex}, for all $ n \in \mathbb{N}$ it holds that
\begin{align}
\sum_{k=0}^{n}\kappa^{n-k}e^{-2c(k+1)} = e^{-2c(n+1)}\sum_{k=0}^{n} \kappa^{n-k}e^{2c(n-k)} =e^{-2c(n+1)} \sum_{m=0}^{n}\left(\kappa e^{2c}\right)^{m} = e^{-2c(n+1)}\left(\frac{\left(\kappa e^{2c} \right)^{n+1} - 1}{\kappa e^{2c} - 1}\right),
\end{align}where used the property of the geometric series $\sum_{m=0}^{n} a^m = \frac{a^{n+1} - 1}{a-1}$ for any positive real number $a \neq 1$ and for all $n \in \mathbb{N}$. 
Therefore we have
\begin{align}
\sum_{k=0}^{n}\kappa^{n-k}|\theta^{k+1} - \theta_{\pi^{k}}|_2
&\leq R\sum_{k=0}^{n}\kappa^{n-k}e^{-2c(k+1)} \\
&\leq R e^{-2c(n+1)}\left(\frac{\left(\kappa e^{2c} \right)^{n+1} - 1}{\kappa e^{2c} - 1}\right)\\
&\leq  R e^{-2c(n+1)}\left(\frac{\left(\kappa e^{2c} \right)^{n+1} + 1}{\left|\kappa e^{2c} - 1\right|}\right) \\
&\leq R\frac{\kappa^{n+1} + e^{-2c(n+1)}}
{\left| \kappa e^{2c} - 1\right|} \\
&\leq \frac{R}{{\left| \kappa e^{2c} - 1\right|} }\left(e^{-\frac{n+1}{\xi}} + e^{-2c(n+1)} \right) \\
&\leq \frac{R}{{\left| \kappa e^{2c} - 1\right|} } e^{-\min\left\{\frac{1}{\xi}, 2c \right\}(n+1)},
\end{align}
where in the final inequality we used the standard identity $(1-x)^{M} \leq e^{-xM}$ for any $x \in (0,1)$.
Substituting this into \eqref{eq:exp_conv_errors} we obtain 
\begin{align}
\delta^{n+1} + \frac{1}{\lambda(1-\gamma)\xi}y^{n+1} 
&\leq e^{-\frac{n+1}{\xi}}\left(\delta^0 + \frac{1}{\lambda(1-\gamma)\xi}y^0 \right)  + \frac{4R}{(1-\gamma) |\kappa e^{\frac{c}{\xi}}-1|} e^{-\min\left\{\frac{1}{\xi}, 2c \right\}(n+1)} \\
&\leq e^{-\min\left\{\frac{1}{\xi}, 2c \right\}(n+1)}\left(\delta^0 + \frac{1}{\lambda(1-\gamma)\xi}y^0   + \frac{4R}{(1-\gamma) |\kappa e^{\frac{c}{\xi}}-1|}\right),
\end{align}which concludes the proof.

\end{proof}

\end{document}